\theoremstyle{change}
\newcommand{\A}{{\mathbb A}}
\newcommand{\Q}{{\mathbb Q}}
\newcommand{\Z}{{\mathbb Z}}
\newcommand{\R}{{\mathbb R}}
\newcommand{\C}{{\mathbb C}}
\newcommand{\F}{{\mathbb F}}
\newcommand{\ee}{\mathrm{e}}
\newcommand{\ii}{\mathrm{i}}
\newcommand{\p}{\mathfrak p}
\newcommand{\GL}{{\rm GL}}
\newcommand{\SL}{{\rm SL}}
\newcommand{\GSp}{{\rm GSp}}
\newcommand{\Sp}{{\rm Sp}}
\newcommand{\sgn}{{\rm sgn}}
\newcommand{\trace}{{\rm tr}}
\newcommand{\tr}{{\rm tr}}
\newcommand{\HH}{\mathbb{H}}
\newcommand{\mat}[4]{\begin{bsmallmatrix}#1&#2\\#3&#4\end{bsmallmatrix}}
\newcommand{\smallmat}[4]{\begin{bsmallmatrix}#1&#2\\#3&#4\end{bsmallmatrix}}
\newcommand{\qed}{\hspace*{\fill}\rule{1ex}{1ex}}
\newcommand{\forget}[1]{}
\def\qdots{\mathinner{\mkern1mu\raise0pt\vbox{\kern7pt\hbox{.}}\mkern2mu
\raise3.4pt\hbox{.}\mkern2mu\raise7pt\hbox{.}\mkern1mu}}
\newenvironment{proof}{\vspace{1ex}\noindent\emph{Proof.}\hspace{0.5em}}
	{\hfill\qed\vspace{2ex}}
\newenvironment{bsmallmatrix}{\left[\begin{smallmatrix}}{\end{smallmatrix}\right]}
\let\originalleft\left
\let\originalright\right
\renewcommand{\left}{\mathopen{}\mathclose\bgroup\originalleft}
\renewcommand{\right}{\aftergroup\egroup\originalright}
\newtheorem{lemma}{Lemma.}[subsection]
\newtheorem{theorem}[lemma]{Theorem.}
\newtheorem{corollary}[lemma]{Corollary.}
\newtheorem{proposition}[lemma]{Proposition.}
\tikzstyle{globaldark} = [rectangle, rounded corners,
\tikzstyle{ichar} = [rectangle, rounded corners,
\tikzstyle{globallight} = [rectangle, rounded corners,
\tikzstyle{localdark} = [rectangle, rounded corners,
\tikzstyle{locallightfit} = [rectangle, rounded corners,
\tikzstyle{locallight} = [rectangle, rounded corners,
\tikzstyle{classical} = [rectangle, rounded corners,
\tikzstyle{dchar} = [rectangle, rounded corners,
\tikzstyle{arrow} = [thick,->,>=stealth]
\newcommand{\sbt}{\,\begin{picture}(-2,2)(-1,-4)\circle*{2.5}\end{picture}\ }
\begin{document}

\thispagestyle{empty}
\begin{center}
 {\bf\Large Siegel Eisenstein Series with Paramodular Level}

 \vspace{3ex}
 Erin Pierce, Ralf Schmidt

\vspace{3ex}
\begin{minipage}{80ex}
{\small ABSTRACT:  Starting with a primitive Dirichlet character of conductor $N$, we construct a paramodular Siegel Eisenstein series of level $N^2$ and weight $k\geq4$. We calculate the Fourier expansion of the holomorphic Siegel modular form thus constructed. The function is a paramodular newform, and its adelization generates an irreducible automorphic representation.}
\end{minipage}

\end{center}

\tableofcontents

\section{Introduction}
Let $k,n$ be positive integers with $k$ even. Assuming $k>n(n+1)$, Siegel \cite{Siegel1935} considered the weight $k$ Eisenstein series
\begin{equation}\label{introeq1}
 E_k(Z)=\sum_{(C,D)}\det(CZ+D)^{-k},
\end{equation}
where $Z$ is an element of the Siegel upper half space $\HH_n$ of degree $n$, and $(C,D)$ run over equivalence classes of ``coprime symmetric pairs'' (the second rows of symplectic $2n\times2n$ matrices). Braun \cite{Braun1939} proved that the series converges absolutely for all $Z$ if and only if $k>n+1$. Siegel \cite{Siegel1939} developed a general theory of what we now call Siegel modular forms. The work \cite{Siegel1939} also contains a formula for the Fourier coefficients of $E_k(Z)$, which is sufficiently explicit to see that they are rational numbers. This formula has been made entirely explicit in \cite{Katsurada1999}.

For degree $n=2$ a concrete formula for the Fourier coefficients of $E_k(Z)$ has been known since the work of Maass \cite{Maass1964, Maass1972}. A simplified version of the formula, with a simplified proof, is given in~\cite{EichlerZagier1985} (Corollary~2 to Theorem~6.3). In this paper we will reproduce this formula as a special case of a more general paramodular Siegel Eisenstein series.

The sum \eqref{introeq1} can be rewritten as
\begin{equation}\label{introeq2}
 E_k(Z)=\sum_{\gamma\in P(\Z)\backslash\Gamma}\det(J(\gamma,Z))^{-k},
\end{equation}
where $\Gamma=\Sp(2n,\Z)$, $J(\mat{A}{B}{C}{D},Z)=CZ+D$, and $P(\Z)$ is the subgroup of $\Gamma$ consisting of elements with $C=0$. Suppose that $K$ is a congruence subgroup of $\Sp(2n,\Q)$, by which we mean a group of the form $\Sp(2n,\Q)\cap\prod_{p<\infty}K_p$, where $K_p$ is an open-compact subgroup of $\Sp(2n,\Q_p)$, and $K_p=\Sp(2n,\Z_p)$ for almost all $p$. One may attempt to define Eisenstein series with respect to $K$ by
\begin{equation}\label{introeq3}
 E_k(Z,K,\delta)=\sum_{\gamma\in(K\cap\delta^{-1}P(\Q)\delta)\backslash K}\det(J(\delta\gamma,Z))^{-k},
\end{equation}
where $\delta$ is a fixed element of $\Sp(2n,\Q)$ (so that $E_k(Z,\Gamma,1)=E_k(Z)$). The summation over cosets is well-defined, since a compactness argument shows that any $\mat{*}{*}{}{A}\in\delta K\delta^{-1}\cap P(\Q)$ satisfies $\det(A)=\pm1$. Up to a scalar, $E_{k}(Z,K,\delta)$ depends only on the double coset $P(\Q)\delta K$. The space $P(\Q)\backslash\Sp(2n,\Q)/K$ parametrizes the $0$-dimensional cusps of the Satake compactification of~$\HH_n/K$ (see the exposition in \cite{PoorYuen2013}). Hence the number of $0$-dimensional cusps is an upper bound for the space spanned by the functions \eqref{introeq3}.

In this work we consider $n=2$ and the paramodular group of level $M$,
\begin{equation}\label{introeq4}
 K(M)=\Sp(4,\Q)\cap\begin{bsmallmatrix}\Z&M\Z&\Z&\Z\\\Z&\Z&\Z&M^{-1}Z\\\Z&M\Z&\Z&\Z\\M\Z&M\Z&M\Z&\Z\end{bsmallmatrix}.
\end{equation}
Our starting point is a primitive Dirichlet character $\eta$ of conductor $N$. Let
\begin{equation}\label{introeq5}
 E_{k,\eta}(Z)=\frac12\sum_{b\in(\Z/N\Z)^\times}\eta(b)E_k(Z,K(N^2),\mathcal{C}_0(bN)),\qquad\text{where }\mathcal{C}_0(x)=\begin{bsmallmatrix}1\\&1\\&x&1\\x&&&1\end{bsmallmatrix}.
\end{equation}
By Theorem~1.3 of \cite{PoorYuen2013}, quoted here as Proposition~\ref{zerodimensionalcuspsprop}, the $\mathcal{C}_0(bN)$ represent some, but in general not all, of the $0$-cusps for the group~$K(N^2)$. Clearly $E_{k,\eta}(Z)$ is a holomorphic Siegel modular form of weight~$k$ with respect to~$K(N^2)$. We will show that the adelization of $E_{k,\eta}(Z)$ generates an irreducible automorphic representation $\pi$ and determine its local components; see Theorem~\ref{maintheorem}. We will also show that $\pi$ does not contain any paramodular vectors of lower level. Consequently $E_{k,\eta}(Z)$ is a newform in the sense that any tracing operation to a group $K(M)$ with $M$ a proper divisor of $N^2$ must be zero; see Theorem~\ref{maintheorem}.

If one has the goal of constructing a well-behaved paramodular Eisenstein series associated to a Dirichlet character of conductor~$N$, then it is of course not obvious that one should be aiming for paramodular level $N^2$. We are naturally lead to the function \eqref{introeq5} by adelizing $\eta$ to an idele class character $\chi=\otimes\chi_p$, using it to construct a global induced representation, factoring it into local representations, choosing a global ``section'' $f_s$ by piecing together appropriate local sections $f_{s,p}$, and defining the resulting adelic Eisenstein series $\mathbf{E}_\chi(g,s,f)$; see Sects.~\ref{indrepsec} and~\ref{Eisdefsec}. The local sections at finite places $p$ are chosen to be paramodular newforms, which according to the theory of \cite{NF} have paramodular level $p^{2v_p(N)}$, hence explaining the resulting global level~$N^2$. The local section at the archimedean place is the essentially unique weight-$k$ vector. Once the adelic Eisenstein series is defined, the descent procedure explained in Sect.~\ref{descentsec} results in the linear combination~\eqref{introeq5}.

We then proceed to determine the Fourier expansion of $E_{k,\eta}(Z)$, which is also done in an adelic way. The adelic Fourier coefficients $c_T$, defined in \eqref{cPdefeq}, are functions on $\GSp(4,\A)$, indexed by semi-integral matrices $T=\mat{n}{r/2}{r/2}{m}$. They decompose into four pieces $c_{T,1}, \ldots c_{T,4}$, each of which unfolds in certain ways into a product of local integrals. In this way the Fourier coefficient calculation comes down to the evaluation of local integrals, both archimedean and non-archimedean. The most difficult calculation is that for $c_{T,4}$ if $\mathrm{rank}(T)=2$. Sect.~\ref{unramcalcsec} is dedicated to this case if $p\nmid N$, and Sect.~\ref{ramcalcsec} treats the case $p\mid N$.

We find that the archimedean integrals, and hence the Fourier coefficients $a(T)$, are zero unless $T$ is positive semidefinite. This of course is an instance of the Koecher principle.

Once the adelic Fourier coefficents are determined, the descent procedure immediately gives the classical Fourier coefficents of $E_{k,\eta}(Z)$. Our final result, Theorem~\ref{Fourierexpansiontheorem}, is phrased without any reference to the adelic method. We note that some Fourier coefficient formulas contain a local quantity $K(s,T,\chi_p)$, defined in \eqref{rank2badeq4}, for which we give a general formula only if $\chi_p$ is quadratic and $p$ is odd. In this case some of the $K(s,T,\chi_p)$ can be expressed in terms of the number of solutions of certain elliptic curves over finite fields; see Proposition~\ref{calcofKr=0lemma}. More cases can be found in \cite{Pierce2025}.

The following diagram summarizes our method. The top row is the representation theory level, the bottom row the classical level. The main point is that the adelic approach provides a natural factorization into local objects (the rightmost column). This is useful for two main reasons. First, the factorization of the global induced representation into local representations guides our choice of local sections. Second, the unfolding procedure leads to adelic integrals that factor into local integrals, essentially reducing the determination of the Fourier coefficients to more tractable local calculations.

\begin{center}\scalebox{0.7}{\begin{tikzpicture}[scale=0.8, node distance=2cm]


\node (grep) [globaldark] {\textbf{Global induced representation}\\ (Siegel parabolic)};

\node (gsec) [globallight, below of=grep, yshift=-0.5cm] {\textbf{Global section}};

\node (ichar) [ichar, left of=grep, xshift=-2cm] {\textbf{Idele class character $\chi$}};

\node (aes) [globallight, below of=gsec, yshift=-2cm] {\textbf{Adelic} \\\textbf{Eisenstein series}};

\node (afc) [globallight, right of=aes, xshift=3cm] {\textbf{Adelic Fourier}\\
\textbf{coefficients} $c_T$\\ [1ex]
$T=\begin{bsmallmatrix}
    n&r/2\\r/2&m
\end{bsmallmatrix}$\\ [1ex]
Unfolding to $\int\limits_{\A}$, $\iint\limits_{\A^2}$, $\iiint\limits_{\A^3}$\\ [1ex]
$\sbt \text{ rank }0$\\
$\sbt \text{ rank }1$\\
$\sbt \text{ rank }2$
};


\node (lrep) [localdark, right of=grep, xshift=8cm] {\textbf{Local induced representations}\\ (Siegel parabolic)};

\node (lsec) [locallight, below of=lrep, yshift=-0.5cm] {\textbf{Local sections}};

\node (linta) [locallightfit, right of=afc, xshift=3cm, yshift=2cm] {\textbf{Local integrals}\\archimedean};

\node (lintu) [locallight, right of=afc, xshift=3cm] {\textbf{Local integrals}\\non-archimedean unramified};

\node (lintr) [locallight, right of=afc, xshift=3cm, yshift=-2cm] {\textbf{Local integrals}\\non-archimedean ramified};


\node (ces) [classical, below of=aes, yshift=-2.75cm] {\textbf{Classical}\\\textbf{Siegel}\\ \textbf{Eisenstein series}\\ \textbf{w.r.t.} $K(N^2)$};

\node (dchar) [dchar, left of=ces, xshift=-2cm] {\textbf{Primitive Dirichlet character $\eta\bmod N$}};

\node (cfe) [classical, right of=ces, xshift=3cm] {\textbf{Classical Fourier}\\
\textbf{expansion}\\ [1ex] $\sum_T a(T) \ee^{2\pi i\tr(TZ)}$};

\draw [arrow] (lsec) -- node[anchor=south] {product} (gsec);
\draw [arrow] (gsec) -- node[anchor=west] {summation} (aes);
\draw [arrow] (lrep) -- node[anchor=east] {distinguished vectors} (lsec);
\draw [arrow] (lrep) -- node[anchor=west] {(local newforms)\vphantom{g}} (lsec);
\draw [arrow] (grep) -- node[anchor=south] {factorization} (lrep);
\draw [arrow] (grep) -- (gsec);
\draw [arrow] (aes) -- node[anchor=west] {descent} (ces);
\draw [arrow] (aes) -- node[anchor=south] {$\iiint\limits_{(\Q/\A)^3}$} (afc);
\draw [arrow] (ces) -- (cfe);
\draw [arrow] (linta) -- (afc);
\draw [arrow] (lintu) -- (afc);
\draw [arrow] (lintr) -- (afc);
\draw [arrow] (afc) -- node[anchor=west] {descent} (cfe);
\draw [arrow] (ichar) -- (grep);
\draw [arrow] (dchar) -- node[anchor=west, yshift=2em] {adelization} (ichar);
\draw [arrow] (dchar) -- (ces);

\end{tikzpicture}}
\end{center}

The modest goal of this work is to consider the holomorphic Eisenstein series $E_{k,\eta}(Z)$ for $k\geq4$ and calculate its Fourier expansion. While we set up the global induced representation~\eqref{indrepeq2} and the resulting Eisenstein series~\eqref{setupeq3b} with a complex parameter~$s$, we will not consider questions of analytic continuation. We could simply have set $s=k$ from the beginning. However, many of our formulas hold for arbitrary~$s$, for example the unramified local integral in Corollary~\ref{rank2cT4np0cor}. In such cases we prefer to state the results in this greater generality, hoping they might find future applications. The reason that not \emph{all} our formulas are stated for arbitrary $s$ is archimedean, where some integrals cannot be evaluated by elementary functions unless $s=k$; see Proposition~\ref{rank2cT4realprop} as an example.

\textbf{Acknowledgements.} We would like to thank Jackson Morrow and Cris Poor for their helpful comments. 
\section{Adelic and classical Eisenstein series}
\subsection{Notation}
For any commutative ring $R$ with $1$, let $J$ be any non-degenerate anti-symmetric $4\times4$-matrix, and let $\GSp(4,R)$ be the group of $g\in\GL(4,R)$ for which there exists $\lambda(g)\in R^\times$ with $^tgJg=\lambda(g)J$. Those $g$ with $\lambda(g)=1$ form the symplectic group~$\Sp(4,R)$. Two convenient choices for $J$ are
\begin{equation}\label{J1J1eq}
 J_1=\begin{bsmallmatrix}&&&1\\&&1\\&-1\\-1\end{bsmallmatrix}\qquad\text{and}\qquad J_2=\begin{bsmallmatrix}&&1\\&&&1\\-1\\&-1\end{bsmallmatrix}.
\end{equation}
For many purposes, in particular those involving representations, it is more convenient to work with $J_1$, but for the classical theory of Siegel modular forms it is imperative to work with $J_2$. Fortunately there is a simple isomorphism between the two versions of the symplectic group, given (in either direction) by switching the first two rows and columns of a matrix. We denote this operation by $g\mapsto g'$. Our policy in this paper (with the exception of the introduction) will be to define $\GSp(4)$ with respect to $J_1$, and use the $g'$ notation as required.

Let $P(R)$ (resp.~$P^{\scriptscriptstyle{1}}(R)$) be the Siegel parabolic subgroup of $\GSp(4,R)$ (resp.~$\Sp(4,R)$), consisting of all matrices whose lower left $2\times2$ block is zero. The typical element of $P(R)$ can be written as
\begin{equation}\label{PRelementeq}
 \mat{A}{*}{}{u\hat A},\qquad u\in R^\times,\:A\in\GL(2,R),\:\hat A=\mat{}{1}{1}{}\,^t\!A^{-1}\mat{}{1}{1}{}.
\end{equation}
Let $Z(R)$ be the center of~$\GSp(4,R)$, consisting of scalar matrices. We let 
\begin{equation}\label{s1s2defeq}
 s_1=\begin{bsmallmatrix}&1\\1\\&&&1\\&&1\end{bsmallmatrix},\qquad s_2=\begin{bsmallmatrix}1\\&&1\\&-1\\&&&1\end{bsmallmatrix}.
\end{equation}
For local fields, we fix maximal compact subgroups, as follows. For finite primes, let $K_p=\GSp(4,\Z_p)$. Over $\R$, let 
\begin{equation}\label{Kinftyeq}
 K_\infty=\left\{\mat{A}{B}{-B}{A}'\mid A+\ii B\in{\rm U}(2)\right\}.
\end{equation}
Then $K_\infty$ is a maximal compact subgroup of $\Sp(4,\R)$ (and is of index $2$ in a maximal compact subgroup of $\GSp(4,\R)$).

Whenever we work with $p$-adic numbers, we denote by $\p$ the maximal ideal $p\Z_p$ of $\Z_p$. We denote by $K(\p^n)$ the local paramodular group of level $\p^n$ inside $\GSp(4,\Q_p)$, i.e.,
\begin{equation}\label{Kpndefeq}
 K(\p^n)=\{g\in\GSp(4,\Q_p)\mid\lambda(g)\in\Z_p^\times\}\cap\begin{bsmallmatrix}\Z_p&\Z_p&\Z_p&\p^{-n}\\\p^n&\Z_p&\Z_p&\Z_p\\\p^n&\Z_p&\Z_p&\Z_p\\\p^n&\p^n&\p^n&\Z_p\end{bsmallmatrix}.
\end{equation}
The global paramodular group of level $N=\prod p^{n_p}$ is
\begin{align}\label{setupeq3c}
 K(N)=\Sp(4,\Q)\cap\prod_{p<\infty}K(\p^{n_p})=\Sp(4,\Q)\cap\begin{bsmallmatrix}\Z&\Z&\Z&N^{-1}\Z\\N\Z&\Z&\Z&\Z\\N\Z&\Z&\Z&\Z\\N\Z&N\Z&N\Z&\Z\end{bsmallmatrix}.
\end{align}
Similarly as in \cite{PoorYuen2013}, we will use the notation
\begin{equation}\label{C0defeq}
 \mathcal{C}_0(x)=\begin{bsmallmatrix}1\\&1\\x&&1\\&x&&1\end{bsmallmatrix}.
\end{equation}
We will also use the notation
\begin{equation}\label{deltaeq}
 \delta_*=\begin{cases}
           1&\text{if the condition $*$ is satisfied},\\
           0&\text{if the condition $*$ is not satisfied}.
          \end{cases}
\end{equation}  For a nonzero integer $r=\pm \prod p^{\alpha_p}$ and a positive integer $N$, we set
\begin{equation}\label{Nnotation}
    r_{\scriptscriptstyle N}=\prod_{p\mid N} p^{\alpha_p} \quad \text{ and }\quad r_{\scriptscriptstyle {\hat N}}=\pm\prod_{p\nmid N} p^{\alpha_p}.
\end{equation} Clearly, $r=r_{\scriptscriptstyle N} r_{\scriptscriptstyle {\hat N}}$.  For a Dirichlet character $\eta$ and an integer $\ell$, we let
\begin{equation}\label{divisorsumwithcharacter}
    \sigma_{\ell,\eta}(a)=\sum_{d\mid a}\eta^{-1}(d)\, d^\ell,
\end{equation} for any nonzero integer $a$. If $\eta$ is the principal character, we write $\sigma_\ell(a)$ instead of $\sigma_{\ell,\eta}$.

\subsection{Induced representations}\label{indrepsec}
Our starting point is a primitive Dirichlet character $\eta$ of conductor $N$. Let $N=\prod p^{n_p}$ be the prime factorization of $N$. Recall that there is an associated unitary idele class character $\chi:\Q^\times\R_{>0}\backslash\A^\times\to\C^\times$, characterized by the following properties.
\begin{itemize}
 \item If $\chi=\otimes_{p\leq\infty}\chi_p$, where $\chi_p$ is a character of $\Q_p^\times$, then the conductor exponent of $\chi_p$ is $a(\chi_p)=n_p$, for all primes $p$.
 \item For a non-zero integer $d$ relatively prime to $N$,
 \begin{equation}\label{Dirichletcorrespondenceeq4}
     \displaystyle\chi_\infty(d)\prod_{\substack{p<\infty\\p\nmid N}}\chi_p(d)=\prod_{\substack{p<\infty\\p\mid N}}\chi_p(d)^{-1}=\eta(d).
 \end{equation}
 In particular, $\chi_p(p)=\eta(p)$ for primes $p\nmid N$.
 \item If $p\mid N$, then $\chi_p(p)=\eta(x_p)$, where $x_p\in\Z$ is such that
  \begin{equation}\label{Dirichletcorrespondenceeq6}
   x_p\equiv\begin{cases}
           p\bmod q^{n_q}&\text{for }q\mid N,\:q\neq p,\\
           1\bmod p^{n_p}.
          \end{cases}
  \end{equation}
 \item The archimedean component is given by
  \begin{equation}\label{Dirichletcorrespondenceeq7}
   \chi_\infty=\begin{cases}
           1&\text{if }\eta(-1)=1,\\
           \sgn&\text{if }\eta(-1)=-1.
          \end{cases}
  \end{equation}
\end{itemize}
We consider the global induced representation
\begin{equation}\label{indrepeq2}
 J_\chi(s)=\chi|\cdot|_\A^{s-3/2}1_{\GL(2,\A)}\rtimes\chi^{-1}|\cdot|_\A^{-s+3/2}.
\end{equation}
The transformation property of the functions $f$ in the standard model of the representation~\eqref{indrepeq2} is
\begin{equation}\label{indrepeq3}
 f(\mat{A}{*}{}{u\hat A}g)=\chi(u^{-1}\det(A))\,|u^{-1}\det(A)|^{s}f(g)
\end{equation}
for $A\in\GL(2,\A)$, $u\in\A^\times$ and $g\in\GSp(4,\A)$. There is a natural factorization
\begin{equation}\label{indrepeq7}
 J_\chi(s)\cong\bigotimes\limits_p J_{\chi_p}(s),
\end{equation}
with the local representations $J_{\chi_p}(s)$ defined analogously. If $p$ is such that $\chi_p$ is unramified, then $J_{\chi_p}(s)$ contains a unique normalized spherical vector~$f_p^\circ$, characterized by being right invariant under $K_p=\GSp(4,\Z_p)$, and $f_p^\circ(1)=1$. Let $f_p\in J_{\chi_p}(s)$ be given, for each place $p$, such that $f_p=f_p^\circ$ for almost all~$p$. Then we denote by $\otimes f_p$ the function $\GSp(4,\A)\to\C$ given by $(\otimes f_p)(g)=\prod_{p\leq\infty}f_p(g_p)$. Clearly, $\otimes f_p\in J_\chi(s)$.

Let $\hat\delta$ be the function on $P(\A)$ given by
\begin{equation}\label{indrepeq4}
 \hat\delta(\mat{A}{*}{}{u\hat A})=|u^{-1}\det(A)|.
\end{equation}
Using the Iwasawa decomposition, we extend $\hat\delta$ to a function on all of $\GSp(4,\A)$ by making it right $K$-invariant, where $K=\prod_{p\leq\infty}K_p$.

We adopt the following notation. For $f\in J_\chi(s_0)$, let $f_s=\hat\delta^{s-s_0}f$. Then $f_s\in J_\chi(s)$. We call the family $f_s$ the \emph{flat section} containing~$f$.

A function $f$ on $\GSp(4,\A)$ is \emph{$K$-finite}, if the space spanned by the functions $f(\cdot\,\kappa)$, where $\kappa$ runs through~$K$, is finite-dimensional. Similar definitions hold in the local cases. By smoothness, for finite primes, any element of $J_{\chi_p}(s)$ is $K_p$-finite. If $f=\otimes f_p$, then $f$ is $K$-finite if and only if $f_\infty$ is $K_\infty$-finite.
\subsection{Definition of the Eisenstein series}\label{Eisdefsec}
We will now make a particular choice of local sections for each $p$, starting at the archimedean place. The $K_\infty$-types of $\Sp(4,\R)$ are parametrized by pairs of integers $(\ell,\ell')$ with $\ell\geq\ell'$; see \cite{Muic2009} or \cite{Schmidt2017}. The scalar $K_\infty$-types are $(k,k)$, where $k\in\Z$. We denote these also by $\alpha_k$. Explicitly,
\begin{equation}\label{alphakdefeq}
   \alpha_k(\mat{A}{B}{-B}{A}')=\det(A+\ii B)^k
\end{equation}
for $A+\ii B\in{\rm U}(2)$ (see \eqref{Kinftyeq}).

The archimedean component of the global character $\chi$ is $\chi_\infty=\sgn^\epsilon$ with $\epsilon=\frac{1-\eta(-1)}2$. Hence the archimedean component of the representation $J_\chi(s)$ defined in \eqref{indrepeq2} is
\begin{equation}\label{Jarcheq1}
 J_{\chi_\infty}(s)=\sgn^\epsilon|\cdot|_\infty^{s-3/2}1_{\GL(2,\R)}\rtimes\sgn^\epsilon|\cdot|_\infty^{-s+3/2}.
\end{equation}
 It is known (see, for example, Lemma 6.1 of \cite{Muic2009}) that if $\ell,\ell'$ are integers with $\ell\geq\ell'$, then 
\begin{equation}\label{Jarcheq2}
 \text{the multiplicity of the $K_\infty$-type $(\ell,\ell')$ in $J_{\chi_\infty}(s)$ is }\begin{cases}
  1&\text{if }\ell\equiv\ell'\equiv\epsilon\bmod2,\\
  0&\text{otherwise}.
 \end{cases}
\end{equation}
In particular, if $\eta$ is an even (odd) Dirichlet character, then $J_{\chi_\infty}(s)$ contains $\alpha_k$ with $k$ even (odd). In the following we will fix a positive integer $k$ with
\begin{equation}\label{Jarcheq6}
 \chi_\infty(-1)=\eta(-1)=(-1)^k.
\end{equation}
Let $f^{(k)}_{s,\infty}$ be a vector spanning the $K_\infty$-type $(k,k)$ in $J_{\chi_\infty}(s)$, normalized so that $f^{(k)}_{s,\infty}(1)=1$. Explicity, for $A\in\GL(2,\R)$ and $\kappa\in K_\infty$,
\begin{equation}\label{Jarcheq3}
 f^{(k)}_{s,\infty}(\mat{A}{*}{}{u\hat A}\kappa)=\sgn^k(u^{-1}\det(A))\,|u^{-1}\det(A)|_\infty^s\,\alpha_k(\kappa).
\end{equation}
For integers $\ell,k$ with $k\geq\ell>0$, let $\mathcal{D}(k,\ell)$ be the lowest weight representation with minimal $K_\infty$-type $(k,\ell)$; see Sect.~2 of~\cite{Schmidt2017}. By Sects.~9, 10 of~\cite{Muic2009},
\begin{equation}\label{Jarcheq5}
 \mathcal{D}(k,k)\subset J_{\sgn^k}(k).
\end{equation}
Hence, if we specialize to $s=k$, then $f_{s,\infty}^{(k)}$ becomes the lowest weight vector in the subrepresentation $\mathcal{D}(k,k)$ of $J_{\chi_\infty}(k)$.  This ensures that the Siegel modular form resulting from descending to $\HH_2$ is holomorphic.

We turn to the finite places. For any prime $p$ it is known by Theorem~5.2.2 of~\cite{NF} that $J_{\chi_p}(s)$ has minimal paramodular level $n_p=a(\chi_p)$. (For unitary $\chi_p$ and ${\rm Re}(s)\notin\{0,1,2,3\}$, the representation $J_{\chi_p}(s)$ is an irreducible representation of type IIb.) It is also known that the space of $K(\p^{n_p})$-invariant vectors is $1$-dimensional. We let $f_{s,p}$ be a vector spanning this $1$-dimensional space. By Proposition~5.1.2 of~\cite{NF},
\begin{equation}\label{PMieq1}
 \GSp(4,\Q_p)=\bigsqcup_{i=0}^{n_p}P(\Q_p)\mathcal{C}_0(p^i)K(\p^{2n_p}),
\end{equation}
where $\mathcal{C}_0(p^i)$ is defined in \eqref{C0defeq}. By the proof of Theorem~5.2.2 of~\cite{NF}, the function $f_{s,p}\in J_{\chi_p}(s)$ is supported on the double coset $P(\Q_p)\mathcal{C}_0(p^{n_p})K(\p^{2n_p})$. We normalize $f_{s,p}$ in such a way that
\begin{equation}\label{PMieq2}
 f_{s,p}(\mathcal{C}_0(p^{n_p}))=\chi_p(p^{n_p})^{-1}.
\end{equation}
In particular, if $n_p=0$ (i.e., if $\chi_p$ is unramified), then $f_{s,p}$ is the spherical vector $f_{s,p}^\circ$ considered earlier. The global conductor of the representation $J_\chi(s)$ is $\prod p^{2n_p}=N^2$.
We define the global section $f_s^{(k)}\in J_\chi(s)$ by
\begin{equation}\label{globalsectioneq}
 f_{s}^{(k)}:=f_{s,\infty}^{(k)}\otimes\bigotimes_{p<\infty}f_{s,p}.
\end{equation}
Let $f^{(k)}=f_{0}^{(k)}$. With this choice of section, we define the Eisenstein series
\begin{align}\label{setupeq3b}
 \mathbf{E}_\chi(g,s,f^{(k)}):=\sum_{\gamma\in P(\Q)\backslash\GSp(4,\Q)}f_{s}^{(k)}(\gamma g)=\sum_{\gamma\in P^{\scriptscriptstyle{1}}(\Q)\backslash\Sp(4,\Q)}f_s^{(k)}(\gamma g),
\end{align}
whenever the sum converges. The function $\mathbf{E}_\chi(\cdot,s,f^{(k)})$ is right invariant under $\prod_{p<\infty}K(\p^{2n_p})$.
\subsection{Descent to the upper half space}\label{descentsec}
Let $\mathbb{H}_2$ be the Siegel upper half space of degree $2$. Write $Z=X+\ii Y\in\mathbb{H}_2$ as
\begin{equation}\label{H2coordinates}
 Z=\mat{\tau}{z}{z}{\tau'},\qquad X=\mat{x}{u}{u}{x'},\qquad Y=\mat{y}{v}{v}{y'}.
\end{equation}
We act with $\GSp(4,\R)^+$ on $\mathbb{H}_2$ by
\begin{equation}\label{actiononH2eq1}
 gZ=g'\langle Z\rangle,
\end{equation}
where $\smallmat{A}{B}{C}{D}\langle Z\rangle=(AZ+B)(CZ+D)^{-1}$ is the classical action. For $Z=X+\ii Y$ as in \eqref{H2coordinates}, let
\begin{equation}\label{bZeq1}
 b_Z=\begin{bsmallmatrix}1&&u&x\\&1&x'&u\\&&1\\&&&1\end{bsmallmatrix}\begin{bsmallmatrix}a&b\\&d\\&&d^{-1}&-\frac{b}{ad}\\&&&a^{-1}\end{bsmallmatrix},
\end{equation}
where
\begin{equation}\label{bZeq2}
 a=\sqrt{y'-\frac{v^2}{y}},\qquad b=\frac v{\sqrt{y}},\qquad d=\sqrt{y}.
\end{equation}
Then $b_Z$ is the unique upper triangular element in $\Sp(4,\R)$ with positive diagonal elements for which $b_ZI=Z$; here $I=\smallmat{\ii}{}{}{\ii}$.

For $Z\in\mathbb{H}_2$ and $g=\smallmat{A}{B}{C}{D}'\in\GSp(4,\R)^+$, let $J(g,Z)=CZ+D$. For $F:\mathbb{H}_2\to\C$ and $g\in\GSp(4,\R)^+$, let
\begin{equation}\label{slashoperatoreq}
 (F|_k g)(Z)=\lambda(g)^{k}\det(J(g,Z))^{-k}F(gZ).
\end{equation}
This is the scalar-invariant slash operator. For $k\in\Z$ recall the character $\alpha_k$ of $K_\infty$ defined in~\eqref{alphakdefeq}. We have
\begin{equation}\label{alphakdefe3}
 \alpha_k(g)=\det(J(g,I))^{-k}\qquad\text{for }g\in K_\infty.
\end{equation}
Suppose $\Phi:\Sp(4,\R)\to\C$ is a function such that
\begin{equation}\label{descenteq1}
 \Phi(g\kappa)=\alpha_k(\kappa)\Phi(g)\qquad\text{for all }\kappa\in K_\infty.
\end{equation}
Define a function $F:\mathbb{H}_2\to\C$ by
\begin{equation}\label{descenteq2}
 F(Z)=\det(J(g,I))^k\,\Phi(g),\qquad\text{where $g\in\Sp(4,\R)$ is such that }gI=Z.
\end{equation}
Using \eqref{alphakdefe3}, it is easy to see that $F$ is well-defined. From \eqref{bZeq1}, \eqref{bZeq2} we see
\begin{equation}\label{detJbZIformulaeq}
 \det(J(b_Z,I))=\det(Y)^{-1/2},
\end{equation}
so that
\begin{equation}\label{descenteq3}
 F(Z)=\det(Y)^{-k/2}\,\Phi(b_Z).
\end{equation}
In the next section we will apply the descent procedure $\Phi\mapsto F$ to the adelic Eisenstein series $\mathbf{E}_\chi(g,s,f^{(k)})$, so as to obtain a classical Eisenstein series.
\subsection{The classical summation}
Recall the notation \eqref{C0defeq}. We quote from Theorem~1.3 of \cite{PoorYuen2013}:
\begin{proposition}\label{zerodimensionalcuspsprop}
 Let $M,f,M_0$ be positive integers with $M=f^2M_0$ and $M_0$ square-free. Then
 \begin{equation}\label{zerodimensionalcuspspropeq1}
  \Sp(4,\Q)=\bigsqcup_{c\mid f}\;\bigsqcup_{b\in(\Z/c\Z)^\times/\{\pm1\}}P^{\scriptscriptstyle{1}}(\Q)\mathcal{C}_0(\hat bc)K(M),
 \end{equation}
 where the first sum runs over the positive divisors of $f$, and $\hat b$ is an integer relatively prime to $M$ with $\hat b\equiv b\bmod c$. (For $c=1$ or $c=2$ we choose $b=1$.) The number of double cosets is $\big\lfloor\frac{f+2}2\big\rfloor$.
\end{proposition}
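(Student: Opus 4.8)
The plan is to interpret $P^{\scriptscriptstyle{1}}(\Q)\backslash\Sp(4,\Q)$ as the set of $\Q$-rational Lagrangian planes in $\Q^4$, on which $\Sp(4,\Q)$ acts transitively with stabilizer $P^{\scriptscriptstyle{1}}(\Q)$ (this is Witt's theorem for the symplectic form). Concretely I would represent a coset $P^{\scriptscriptstyle{1}}(\Q)g$ by the row span of the bottom two rows $(C\;D)$ of $g$, and normalize by the primitive integral lattice $L\cap\Z^4$. The statement then becomes the classification of such Lagrangians up to the right action of $K(M)$, together with an explicit count; this is exactly the enumeration of $0$-dimensional cusps.

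Next I would localize. Since $\Sp_4$ is simply connected with $\Sp(4,\R)$ noncompact, strong approximation gives $\Sp(4,\A)_f=\Sp(4,\Q)\prod_p K(\p^{n_p})$, and I would use this to identify the global orbit problem with a product of local orbit problems $P^{\scriptscriptstyle{1}}(\Q_p)\backslash\Sp(4,\Q_p)/K(\p^{n_p})$, trivial for $p\nmid M$ by the Iwasawa decomposition, modulo one global $\{\pm1\}$-identification that does not factor over primes. For the local problem I would show, refining the $\GSp$-decomposition \eqref{PMieq1}, that the $K(\p^{n})$-orbits of $\Q_p$-Lagrangians are represented by $\mathcal{C}_0(up^i)$ with $0\le i\le\lfloor n/2\rfloor$ and $u\in(\Z_p/\p^i)^\times$ (the similitude being available in $\GSp$ but not in $\Sp$ is exactly what produces the extra unit $u$). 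The key constraint $i\le\lfloor v_p(M)/2\rfloor=v_p(f)$ expresses that a Lagrangian can only be ``stretched'' to half the $p$-depth of the level, and it is what makes $c$ range over divisors of $f$ rather than of $M$. Reassembling by the Chinese remainder theorem, $c=\prod_p p^{i_p}$ runs over $c\mid f$ and the local units glue to $b\in(\Z/c\Z)^\times$; choosing $\hat b$ coprime to $M$ with $\hat b\equiv b\bmod c$ realizes each class by a single $\mathcal{C}_0(\hat bc)$.

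Finally I would establish that the listed representatives are pairwise inequivalent and account for the $\{\pm1\}$. I would compute directly when $P^{\scriptscriptstyle{1}}(\Q)\mathcal{C}_0(x)K(M)=P^{\scriptscriptstyle{1}}(\Q)\mathcal{C}_0(x')K(M)$: writing out the Levi and unipotent action of $P^{\scriptscriptstyle{1}}(\Q)$ against the paramodular conditions at each $p\mid M$, this reduces to a congruence determining $c$ and $b$ up to the replacement $b\mapsto-b$ coming from a Levi element $\mathrm{diag}(A,\hat A)$ with $\det A=-1$. The count is then $\sum_{c\mid f}|(\Z/c\Z)^\times/\{\pm1\}|$, which the elementary identity $\sum_{c\mid f}\phi(c)=f$ turns into $\lfloor(f+2)/2\rfloor$ after separating the cases $c\le2$. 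The main obstacle is the local classification together with the two sharp features it must produce: the exact bound $i\le v_p(f)$ (exhaustiveness, i.e.\ that every integral Lagrangian lattice reduces under $K(\p^{n_p})$ to one of the $\mathcal{C}_0(up^i)$) and the precise stabilizer yielding the $\{\pm1\}$. Verifying these requires a careful elementary-divisor analysis of Lagrangian lattices against the paramodular lattice chain, and is the technical heart of the argument.
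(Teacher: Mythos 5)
The paper offers no proof of this proposition at all --- it is quoted from Theorem~1.3 of \cite{PoorYuen2013} --- so your proposal can only be judged on its own terms, and judged that way it contains a genuine error, located exactly at the step you call the technical heart. Your claimed local classification is false: the unit parameter $u$ does not survive in $P^{\scriptscriptstyle{1}}(\Q_p)\backslash\Sp(4,\Q_p)/K(\p^{n_p})$. The Levi of the Siegel parabolic of $\Sp(4)$ is a full $\GL(2)$ with unconstrained determinant, so the unavailability of the similitude is irrelevant (your parenthetical reason is precisely the misconception). Concretely, for $a\in\Z_p^\times$ the element $m_a=\mathrm{diag}(a,1,1,a^{-1})$ is symplectic with respect to $J_1$ and lies in $P^{\scriptscriptstyle{1}}(\Q_p)\cap K(\p^{n_p})$; writing $\mathcal{C}_0(x)=1+x(E_{31}+E_{42})$ with matrix units $E_{ij}$, conjugation gives $m_a\,\mathcal{C}_0(x)\,m_a^{-1}=\mathcal{C}_0(a^{-1}x)$. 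Hence $P^{\scriptscriptstyle{1}}(\Q_p)\,\mathcal{C}_0(ux)\,K(\p^{n_p})=P^{\scriptscriptstyle{1}}(\Q_p)\,\mathcal{C}_0(x)\,K(\p^{n_p})$ for \emph{every} unit $u$: at most the valuation survives as a local invariant, exactly as in the $\GSp$ statement \eqref{PMieq1}. Consequently your assembly ``product of local orbit problems modulo one global $\{\pm1\}$'' yields at most one class per divisor $c\mid f$, i.e.\ as many classes as $f$ has divisors, instead of $\lfloor\frac{f+2}2\rfloor$ (for $f=5$: two instead of three), and the parameter $b$ never appears. Your plan only reproduces the correct answer because the erroneous local unit compensates for a missing global ingredient.

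That missing ingredient is the real content: $b$ is an irreducibly \emph{global} invariant, living in the fibers of the localization map rather than in the local orbit spaces. Strong approximation makes
\begin{equation*}
 P^{\scriptscriptstyle{1}}(\Q)\backslash\Sp(4,\Q)/K(M)\;\longrightarrow\;\prod_p P^{\scriptscriptstyle{1}}(\Q_p)\backslash\Sp(4,\Q_p)/K(\p^{n_p})
\end{equation*}
surjective, but far from injective. The fiber over the data $(i_p)_p$, $c=\prod p^{i_p}$, is $P^{\scriptscriptstyle{1}}(\Q)\backslash P^{\scriptscriptstyle{1}}(\A_{\rm fin})/\bigl(P^{\scriptscriptstyle{1}}(\A_{\rm fin})\cap\prod_p g_pK(\p^{n_p})g_p^{-1}\bigr)$ with $g_p=\mathcal{C}_0(p^{i_p})$ (finite adeles); projecting to the Levi and using strong approximation for the unipotent radical and for $\SL(2)$ turns this into the ray-class-type quotient $\Q^\times\backslash\A_{\rm fin}^\times/\prod_p\det U_p$, where $U_p$ is the Levi part of $P^{\scriptscriptstyle{1}}(\Q_p)\cap g_pK(\p^{n_p})g_p^{-1}$. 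The computation a correct adelic proof must supply is $\det U_p=1+\p^{i_p}$ (containment $\supseteq$ uses elements $\mathrm{diag}(A,\hat A)$ with a compensating upper-right block; the sharp point is that $-1\notin\det U_p$ once $i_p\geq1$), whence the fiber is $\hat\Z^\times\big/\{\pm1\}\textstyle\prod_p(1+\p^{i_p})\cong(\Z/c\Z)^\times/\{\pm1\}$, the $\{\pm1\}$ being the \emph{global} unit group $\Z^\times$. In short: locally every unit can rescale $\mathcal{C}_0$, globally only $\pm1$ can, and the proposition measures exactly this discrepancy. The paper's own construction illustrates the phenomenon you would erase: in \eqref{rewritesummationeq5}, the representatives $\mathcal{C}_0(\hat bN)$ for all $b$ lie in the single local double coset supporting $f_{s,p}$, and are distinguished not by any local coset invariant but by the Levi-determinant value $\chi_p(\hat b)^{-1}$.
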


\begin{lemma}\label{gammabZlemma}
 Let $\gamma\in\Sp(4,\R)$ and $Z\in\HH_2$. Then $\gamma b_Z=b_{\tilde Z}\kappa$, where $\tilde Z=\gamma Z$ and $\kappa\in K_\infty$ is such that
 \begin{equation}\label{gammabZlemmaeq1}
  \det(J(\kappa,I))=\sqrt{\frac{\det(\tilde Y)}{\det(Y)}}\det(J(\gamma,Z)).
 \end{equation}
\end{lemma}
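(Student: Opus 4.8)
The plan is to exhibit $\kappa$ explicitly, verify that it lies in $K_\infty$ by identifying $K_\infty$ with the stabilizer of the base point $I$, and then read off the determinant identity from the cocycle property of the automorphy factor $J$ together with the known value \eqref{detJbZIformulaeq}.

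First I would set $\kappa:=b_{\tilde Z}^{-1}\gamma b_Z\in\Sp(4,\R)$, so that $\gamma b_Z=b_{\tilde Z}\kappa$ holds by construction; the only content is that $\kappa\in K_\infty$. Since $b_ZI=Z$ and $b_{\tilde Z}I=\tilde Z=\gamma Z$, applying $\kappa$ to $I$ gives $\kappa I=b_{\tilde Z}^{-1}\gamma b_ZI=b_{\tilde Z}^{-1}\gamma Z=b_{\tilde Z}^{-1}\tilde Z=I$, so $\kappa$ fixes the base point. It then remains to see that the stabilizer of $I$ in $\Sp(4,\R)$ is exactly $K_\infty$. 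This is the standard description of the base-point stabilizer: writing the classical action of $\smallmat{A}{B}{C}{D}$ on $\ii I_2$ and setting it equal to $\ii I_2$ forces $D=A$ and $C=-B$, after which the remaining symplectic relations are equivalent to $A+\ii B\in{\rm U}(2)$. Comparing with \eqref{Kinftyeq}, and accounting for the $g\mapsto g'$ convention built into the action \eqref{actiononH2eq1} (so that $g$ fixes $I$ iff $g'$ fixes $\ii I_2$ classically), identifies this stabilizer with $K_\infty$. Hence $\kappa\in K_\infty$.

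For the determinant formula I would use that $J$ satisfies the cocycle relation
\[
 J(g_1g_2,Z)=J(g_1,g_2Z)\,J(g_2,Z),
\]
which transfers from the classical automorphy-factor cocycle because $g\mapsto g'$ is a group isomorphism and $gZ=g'\langle Z\rangle$. Applying this to the two sides of $\gamma b_Z=b_{\tilde Z}\kappa$ evaluated at $I$, and using $b_ZI=Z$ together with $\kappa I=I$, yields
\[
 J(\gamma,Z)\,J(b_Z,I)=J(b_{\tilde Z},I)\,J(\kappa,I).
\]
Taking determinants and inserting $\det(J(b_Z,I))=\det(Y)^{-1/2}$ and $\det(J(b_{\tilde Z},I))=\det(\tilde Y)^{-1/2}$ from \eqref{detJbZIformulaeq} gives $\det(J(\kappa,I))=\sqrt{\det(\tilde Y)/\det(Y)}\,\det(J(\gamma,Z))$, exactly as claimed.

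The only genuinely delicate points are bookkeeping ones: verifying the cocycle relation for $J$ in the present $J_1$/$g'$ convention (equivalently, checking that $g\mapsto g'$ is compatible with the action so that the classical cocycle passes through), and pinning down the stabilizer of $I$ as \emph{precisely} $K_\infty$ rather than merely up to sign or finite index. Both reduce to short matrix computations, so I expect no conceptual obstacle beyond keeping the two symplectic forms $J_1$ and $J_2$ straight.
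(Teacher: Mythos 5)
Your proof is correct and takes essentially the same route as the paper's: both rest on the cocycle identity applied to $\gamma b_Z=b_{\tilde Z}\kappa$ at $I$, giving $J(\gamma,Z)J(b_Z,I)=J(b_{\tilde Z},I)J(\kappa,I)$, followed by taking determinants and invoking \eqref{detJbZIformulaeq}. The only difference is organizational: you construct $\kappa=b_{\tilde Z}^{-1}\gamma b_Z$ explicitly and verify $\kappa\in K_\infty$ by identifying $K_\infty$ with the stabilizer of $I$, whereas the paper tacitly obtains the decomposition from the Iwasawa decomposition and reads off $\tilde Z=\gamma Z$; your version just makes explicit a detail the paper leaves implicit.
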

\begin{proof}
The relation $\tilde Z=\gamma Z$ follows from applying $\gamma b_Z=b_{\tilde Z}\kappa$ to $I$. From $J(\gamma b_Z,I)=J(b_{\tilde Z}\kappa,I)$ we get $J(\gamma,Z)J(b_Z,I)=J(b_{\tilde Z},I)J(\kappa,I)$. Now \eqref{gammabZlemmaeq1} follows by taking determinants on both sides, observing \eqref{detJbZIformulaeq}.
\end{proof}

%
Recall that $N=\prod p^{n_p}$ is the conductor of our primitive Dirichlet character~$\eta$, and that our local sections $f_{s,p}$ for finite $p$ are right invariant under $K(\p^{2n_p})$. By Proposition~\ref{zerodimensionalcuspsprop},
\begin{equation}\label{rewritesummationeq3}
 \Sp(4,\Q)=\bigsqcup_{c\mid N}\;\bigsqcup_{b\in(\Z/c\Z)^\times/\{\pm1\}}P^{\scriptscriptstyle{1}}(\Q)\mathcal{C}_0(\hat bc)K(N^2),
\end{equation}
with the number of double cosets being $\big\lfloor\frac{N+2}2\big\rfloor$. Substituting into \eqref{setupeq3b}, we see that
\begin{align}\label{rewritesummationeq4}
 \mathbf{E}_\chi(g,s,f^{(k)})&=\sum_{c\mid N}\;\sum_{b\in(\Z/c\Z)^\times/\{\pm1\}}\;\sum_{\gamma\in (K(N^2)\cap\mathcal{C}_0(\hat bc)^{-1}P^1(\Q)\mathcal{C}_0(\hat bc))\backslash K(N^2)}f_s^{(k)}(\mathcal{C}_0(\hat bc)\gamma g).
\end{align}
We have, for $g\in\Sp(4,\R)$ and $\gamma\in K(N^2)$,
\begin{align}\label{rewritesummationeq5}
 f_s^{(k)}(\mathcal{C}_0(\hat bc)\gamma g)&=f_{s,\infty}^{(k)}(\mathcal{C}_0(\hat bc)\gamma g)\prod_{p<\infty}f_{s,p}(\mathcal{C}_0(\hat bc))\nonumber\\
 &=f_{s,\infty}^{(k)}(\mathcal{C}_0(\hat bc)\gamma g)\prod_{p\mid N}f_{s,p}({\rm diag}(1,1,\hat b,\hat b)\mathcal{C}_0(c))\nonumber\\
 &=f_{s,\infty}^{(k)}(\mathcal{C}_0(\hat bc)\gamma g)\prod_{p\mid N}\chi_p(\hat b)^{-1}f_{s,p}(\mathcal{C}_0(c))\nonumber\\
 &\stackrel{\eqref{Dirichletcorrespondenceeq4}}{=}f_{s,\infty}(\mathcal{C}_0(\hat bc)\gamma g)\,\eta(\hat b)\prod_{p\mid N}f_{s,p}(\mathcal{C}_0(c)).
\end{align}
By \eqref{PMieq2}, the product is $0$ unless $c=N$, in which case we have
\begin{align}\label{rewritesummationeq5b}
 f_{s,p}(\mathcal{C}_0(N))&=f_{s,p}({\rm diag}(1,1,Np^{-n_p},Np^{-n_p})\mathcal{C}_0(p^{n_p}))\stackrel{\eqref{indrepeq3}, \eqref{PMieq2}}{=}\chi_p(1/N).
\end{align}
Hence, since $\prod_{p\mid N}\chi_p(1/N)=1$,
\begin{align}\label{rewritesummationeq6}
 \mathbf{E}_\chi(g,s,f^{(k)})=\sum_{b\in(\Z/N\Z)^\times/\{\pm1\}}\eta(b)\sum_{\gamma\in (K(N^2)\cap\mathcal{C}_0(bN)^{-1}P^{\scriptscriptstyle{1}}(\Q)\mathcal{C}_0(bN))\backslash K(N^2)}f_{s,\infty}^{(k)}(\mathcal{C}_0(bN)\gamma g).
\end{align}
Now assume that $g=b_Z$ for some $Z\in\HH_2$. By Lemma~\ref{gammabZlemma} (with $\mathcal{C}_0(\hat bN)\gamma$ instead of $\gamma$), we have $\mathcal{C}_0(bN)\gamma b_Z=b_{\tilde Z}\kappa$, where $\tilde Z=\mathcal{C}_0(bN)\gamma Z$ and $\kappa\in K_\infty$ is such that
\begin{equation}\label{rewritesummationeq8}
 \det(J(\kappa,I))=\sqrt{\frac{\det(\tilde Y)}{\det(Y)}}\det(J(\mathcal{C}_0(bN)\gamma,Z)).
\end{equation}
Hence
\begin{align}\label{rewritesummationeq9}
 f_{s,\infty}^{(k)}(\mathcal{C}_0(bN)\gamma g)&=f_{s,\infty}^{(k)}(b_{\tilde Z}\kappa)=\det(\tilde Y)^{s/2}\alpha_k(\kappa)\stackrel{\eqref{alphakdefe3}}{=}\det(\tilde Y)^{s/2}\det(J(\kappa,I))^{-k}\nonumber\\
 &\stackrel{\eqref{rewritesummationeq8}}{=}\det(\tilde Y)^{s/2-k/2}\det(Y)^{k/2}\det(J(\mathcal{C}_0(bN)\gamma,Z))^{-k}.
\end{align}
At this point we assume $s=k$. Substituting into \eqref{rewritesummationeq6}, we get
\begin{equation}\label{rewritesummationeq10}
 \mathbf{E}_\chi(g,k,f)=\det(Y)^{k/2}\sum_{b\in(\Z/N\Z)^\times/\{\pm1\}}\eta(b)E_k(Z,K(N^2),\mathcal{C}_0(bN)),
\end{equation}
where
\begin{equation}\label{rewritesummationeq11}
 E_k(Z,K(N^2),\mathcal{C}_0(bN)):=\sum_{\gamma\in (K(N^2)\cap\mathcal{C}_0(bN)^{-1}P^{\scriptscriptstyle{1}}(\Q)\mathcal{C}_0(bN))\backslash K(N^2)}\det(J(\mathcal{C}_0(bN)\gamma,Z))^{-k},
\end{equation} as in \eqref{introeq3}.
Keeping in mind \eqref{descenteq3}, we see that the classical function corresponding to $\mathbf{E}_\chi(\cdot,k,f)$ is
\begin{equation}\label{rewritesummationeq12}
 E_{k,\eta}(Z):=\sum_{b\in(\Z/N\Z)^\times/\{\pm1\}}\eta(b)E_k(Z,K(N^2),\mathcal{C}_0(bN)),
\end{equation}
Note that there is a bijection
\begin{align}\label{rewritesummationeq13}
 (K(N^2)\cap\mathcal{C}_0(-bN)P^{\scriptscriptstyle{1}}(\Q)\mathcal{C}_0(bN))\backslash K(N^2)\stackrel{\sim}{\longrightarrow}(K(N^2)\cap\mathcal{C}_0(bN)P^{\scriptscriptstyle{1}}(\Q)\mathcal{C}_0(-bN))\backslash K(N^2),
\end{align}
induced by the map $g\mapsto{\rm diag}(-1,1,1,-1)g$. It is then straightforward to verify that
\begin{align}\label{rewritesummationeq15}
 E_k(Z,K(N^2),\mathcal{C}_0(bN))(Z,k)&=(-1)^kE_k(Z,K(N^2),\mathcal{C}_0(-bN)).
\end{align}
Since $\eta(-1)=(-1)^k$, this confirms that the summation over cosets in \eqref{rewritesummationeq12} is well-defined, and that
\begin{equation}\label{rewritesummationeq16}
 E_{k,\eta}(Z)=\frac12\sum_{b\in(\Z/N\Z)^\times}\eta(b)E_k(Z,K(N^2),\mathcal{C}_0(bN)).
\end{equation}
Let
\begin{align}\label{intersectioneq2}
 H(N)&:={\rm diag}(N,1,1,N^{-1})K(N^2){\rm diag}(N^{-1},1,1,N)=\Sp(4,\Z)\cap\begin{bsmallmatrix}\Z&N\Z&N\Z&\Z\\N\Z&\Z&\Z&N\Z\\N\Z&\Z&\Z&N\Z\\\Z&N\Z&N\Z&\Z\end{bsmallmatrix}.
\end{align}
Conjugation by ${\rm diag}(N,1,1,N^{-1})$ maps the intersection $K(N^2)\cap\mathcal{C}_0(bN)^{-1}P^{\scriptscriptstyle{1}}(\Q)\mathcal{C}_0(bN))$ appearing in \eqref{rewritesummationeq11} onto
\begin{equation}\label{intersectioneq3}
 H(N)\cap\mathcal{C}_0(-b)P^{\scriptscriptstyle{1}}(\Q)\mathcal{C}_0(b)=H(N)\cap\mathcal{C}_0(-b)P^{\scriptscriptstyle{1}}(\Z)\mathcal{C}_0(b).
\end{equation}
It is an exercise to show that this intersection consists of all elements
 \begin{equation}\label{intersectionlemmaeq1}
  \mathcal{C}_0(-b)\begin{bsmallmatrix}1&&xN&yN\\&1&zN&xN\\&&1\\&&&1\end{bsmallmatrix}\begin{bsmallmatrix}\alpha&\beta&&-\beta b'\\\gamma&\delta&-\gamma b'\\&&\alpha&-\beta\\&&-\gamma&\delta\end{bsmallmatrix}\mathcal{C}_0(b)
 \end{equation}
with $\mat{\alpha}{\beta}{\gamma}{\delta}\in\SL(2,\Z)$ and $x,y,z\in\Z$; here, $b'$ is a fixed integer with $bb'\equiv1\bmod N$. Using this fact, the summation in \eqref{rewritesummationeq11} can be rewritten in terms of coprime symmetric pairs. Familiar arguments from the classical theory then show that the summation in \eqref{rewritesummationeq11} is absolutely and locally uniformly convergent for $k\geq4$. Hence the functions $E_k(Z,K(N^2),\mathcal{C}_0(bN))$ are holomorphic under this assumption.
\begin{theorem}\label{maintheorem}
 Let $k\geq4$ be an integer, and let $\eta$ be a primitive Dirichlet character of conductor~$N$. Then the function $E_{k,\eta}(Z)$ defined in \eqref{rewritesummationeq12} is a holomorphic Siegel modular form of weight~$k$ with respect to the paramodular group $K(N^2)$. Its adelization generates an irreducible, automorphic representation $\pi\cong\otimes\pi_p$ of $\GSp(4,\A)$, where $\pi_\infty=\mathcal{D}(k,k)$, the holomorphic discrete series representation of $\GSp(4,\R)$ with scalar minimal $K_\infty$-type $(k,k)$, and $\pi_p=\chi_p|\cdot|_p^{k-3/2}1_{\GL(2,\Q_p)}\rtimes\chi_p^{-1}|\cdot|_p^{-k+3/2}$, a representation of type IIb in the classification of \cite{NF}; here $\chi=\otimes\chi_p$ is the idele class character corresponding to $\eta$. Moreover, $E_{k,\eta}(Z)$ is a newform in the sense that any tracing operation to a paramodular form of level a proper divisor of $N^2$ gives zero.
\end{theorem}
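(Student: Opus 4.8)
The plan is to transport every assertion from the adelic side, where the construction is transparent, down to $\HH_2$ via the descent already performed in \eqref{rewritesummationeq10}--\eqref{rewritesummationeq16}, and then to read off the representation-theoretic statements from the standard theory of Eisenstein series together with the local results already quoted. For \emph{modularity, weight, level and holomorphy}: by \eqref{rewritesummationeq16}, $E_{k,\eta}$ is the classical descent of $\mathbf E_\chi(\cdot,k,f^{(k)})$, and the latter is right invariant under $\prod_{p<\infty}K(\p^{2n_p})$ (noted after \eqref{setupeq3b}). Pushing this invariance through the descent \eqref{descenteq3}, using that the archimedean section transforms by the character $\alpha_k$ of $K_\infty$ (cf.\ \eqref{descenteq1}, \eqref{alphakdefeq}), yields that $E_{k,\eta}$ has weight $k$ and is invariant under $K(N^2)$. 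For $k\geq4$ the defining sum converges absolutely and locally uniformly (established just before the theorem), so $E_{k,\eta}$ is well defined; holomorphy follows because, by \eqref{Jarcheq5}, at $s=k$ the archimedean section $f^{(k)}_{k,\infty}$ is the lowest weight vector of the holomorphic discrete series $\mathcal D(k,k)$, not a generic vector of the reducible representation $J_{\chi_\infty}(k)$.

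\emph{Identification of the generated representation.} The map $f\mapsto\mathbf E_\chi(\cdot,k,f)$ is a $\GSp(4,\A)$-equivariant operator from $J_\chi(k)$ into the space of automorphic forms, so $\pi$ is its image on the subrepresentation $\Pi\subset J_\chi(k)$ generated by $f^{(k)}_k=f^{(k)}_{k,\infty}\otimes\bigotimes_{p<\infty}f_{k,p}$. Since $k\geq4$ gives $\mathrm{Re}(s)=k\notin\{0,1,2,3\}$, each finite factor $J_{\chi_p}(k)$ is irreducible of type IIb, hence generated by the nonzero vector $f_{k,p}$; and $f^{(k)}_{k,\infty}$, being the lowest weight vector, generates $\mathcal D(k,k)$. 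Thus $\Pi=\mathcal D(k,k)\otimes\bigotimes_{p<\infty}J_{\chi_p}(k)$ is irreducible.

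\emph{Irreducibility and local components.} As $\Pi$ is irreducible and the Eisenstein map is equivariant, its restriction to $\Pi$ is either zero or injective, so it suffices to check $\mathbf E_\chi(\cdot,k,f^{(k)})\neq0$; then $\pi\cong\Pi$, which simultaneously gives the irreducibility, the automorphy, the factorization $\pi\cong\otimes\pi_p$, and the stated components $\pi_\infty=\mathcal D(k,k)$ and $\pi_p=J_{\chi_p}(k)$. For nonvanishing I would compute the constant term along the Siegel parabolic: in the range of absolute convergence it equals $f^{(k)}_k$ plus its image under the standard intertwining operator, and since these two summands transform along the center of the Levi by the inequivalent exponents $s$ and $3-s$ (equal only at $s=3/2$, whereas $k\geq4$), the nonzero leading term $f^{(k)}_k$ cannot be cancelled. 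This is also confirmed \emph{a posteriori} by the manifestly nonzero Fourier expansion of Theorem~\ref{Fourierexpansiontheorem}. I expect this injectivity/nonvanishing step to be the main obstacle, precisely because $s=k$ is a reducibility point of $J_{\chi_\infty}(k)$ and one must ensure that the Eisenstein map does not annihilate the holomorphic piece $\mathcal D(k,k)$.

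\emph{Newform property.} By the paramodular newform theory of \cite{NF} recalled before the theorem, each $\pi_p=J_{\chi_p}(k)$ has paramodular conductor $\p^{2n_p}$, so the global conductor of $\pi$ is $\prod_p\p^{2n_p}=N^2$ and $\pi_p^{K(\p^m)}=0$ whenever $m<2n_p$. Any tracing operation from level $K(N^2)$ to level $K(M)$ with $M\mid N^2$, $M\neq N^2$, is assembled from the local level-lowering operators; at a prime $p$ with $v_p(M)<2n_p$ the output lies in $\pi_p^{K(\p^{v_p(M)})}=0$, so the trace of $E_{k,\eta}$ to $K(M)$ vanishes, which is exactly the newform property. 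The only delicate point here is the dictionary between classical trace operators and the vanishing of local fixed vectors below the conductor, which is supplied by the local-global paramodular formalism of \cite{NF}.
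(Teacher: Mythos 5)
Your proposal is correct and follows essentially the same route as the paper's proof: descent gives the weight-$k$ paramodular transformation property, the equivariant map $f\mapsto\mathbf{E}_\chi(\cdot,k,f)$ restricted to the irreducible subrepresentation $\mathcal{D}(k,k)\otimes\bigotimes_{p<\infty}J_{\chi_p}(k)$ identifies $\pi$ and its local components, and the newform property follows from the local paramodular conductor $p^{2n_p}$ of each $J_{\chi_p}(k)$. The only point where you go beyond the paper is in making the nonvanishing/injectivity of the Eisenstein map explicit (via the distinct exponents in the constant term, or a posteriori via the nonzero Fourier coefficients); the paper asserts that the restriction of $\varphi$ to $V$ is an isomorphism onto an irreducible subspace without spelling this step out, so your addition is a refinement of, not a departure from, the same argument.
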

\begin{proof}
We already proved that each $E_{k}(Z,K(N^2),\mathcal{C}_0(bN))$, and hence $E_{k,\eta}(Z)$, is a holomorphic function on $\HH_2$. It is immediate from \eqref{rewritesummationeq11} that each $E_{k}(Z,K(N^2),\mathcal{C}_0(bN))$, and hence $E_{k,\eta}(Z)$, has the transformation property of a paramodular form of level $N^2$. By its origin, the adelization of $E_{k,\eta}(Z)$ is the Eisenstein series $\mathbf{E}_\chi(g,s,f^{(k)})$. Consider the map $\varphi$ from the global induced representation $J_\chi(k)$ into the space of automorphic forms on $\GSp(4,\A)$ given by $f\mapsto \mathbf{E}_\chi(\cdot,k,f)$. Clearly $\varphi$ commutes with right translation. The subrepresentation $V:=\mathcal{D}(k,k)\otimes\bigotimes_{p<\infty} J_{\chi_p}(k)$ of $J_\chi(k)$ is irreducible, because each local component is irreducible. Since $f^{(k)}_k$ lies in $V$, the restriction of $\varphi$ to $V$ maps $V$ isomorphically onto an irreducible subspace of automorphic forms containing $\mathbf{E}_\chi(g,s,f^{(k)})$. This proves the statement about $\pi$. The newform property follows from the fact that the minimal paramodular level of $\pi_p=J_{\chi_p(k)}$ is $p^{2n_p}$.
\end{proof}

\section{Fourier coefficients}
We now proceed to calculate the Fourier expansion of the function $E_{k,\eta}(Z)$.  Our main result in this section is Theorem \ref{Fourierexpansiontheorem}.
\subsection{Fourier expansion}\label{Fourierexpansionsec}
Let $\psi=\prod\psi_p$ be the character of $\Q\backslash\A$ for which $\psi_\infty(x)=\ee^{2\pi ix}$. Let $n,r,m\in\Q$ and $T=\smallmat{n}{r/2}{r/2}{m}$. The $T$-th Fourier coefficient of the Eisenstein series defined in \eqref{setupeq3b}, for any global section $f$, is
\begin{align}\label{cPdefeq}
 c_{T}(g,s,f)=\int\limits_{{\rm Sym}_2(\Q)\backslash{\rm Sym}_2(\A)}\mathbf{E}_\chi(\mat{1}{X}{}{1}'g,s,f)\psi\left(\trace(TX)\right)^{-1}\,dX.
\end{align}
We will also write $c_{n,r,m}$ instead of $c_T$ if convenient. Clearly,
\begin{equation}\label{cPtrafoeeq}
 c_T(\mat{1}{X}{}{1}'g,s,f)=\psi(\trace(TX))c_T(g,s,f)
\end{equation}
for all $X\in{\rm Sym}_2(\A)$ and $g\in\GSp(4,\A)$. A calculation shows that
\begin{equation}\label{cPTAeq}
 c_{T}(\mat{\lambda\hat A}{}{}{A}g,s,f)=c_{\chi,\lambda A^{-1}T\,^t\!A^{-1}}(g,s,f)
\end{equation}
for $A\in\GL(2,\Q)$ and $\lambda\in\Q^\times$. By Fourier inversion,
\begin{equation}\label{Fourierexpansioneq}
 \mathbf{E}_\chi(g,s,f)=\sum_{n,r,m\in\Q}c_{n,r,m}(g,s,f).
\end{equation}
Our goal will be to evaluate $\mathbf{E}_\chi(g,s,f^{(k)})$ at $g\in\Sp(4,\R)$. By strong approximation, these values determine $\mathbf{E}_\chi(\cdot,s,f^{(k)})$. Hence we will evaluate every $c_{n,r,m}(g,s,f^{(k)})$ at $g\in\Sp(4,\R)$. By the right transformation property~\eqref{Jarcheq3} it suffices to evaluate these functions for $g$ being upper triangular with positive diagonal elements. In fact, by \eqref{cPtrafoeeq}, it suffices to evaluate these functions at
\begin{equation}\label{bZeq1b}
 g=\begin{bsmallmatrix}a&b\\&d\\&&d^{-1}&-\frac{b}{ad}\\&&&a^{-1}\end{bsmallmatrix},
\end{equation}
with $a,b,d$ as in \eqref{bZeq2}. For any $g\in\Sp(4,\R)$, suppose that $\lambda,\mu\in\hat\Z$ and $\kappa\in N^{-2}\hat\Z$. Then, by~\eqref{cPtrafoeeq}, either $\psi(n\lambda+r\mu+m\kappa)=1$ or $c_{n,r,m}(g,s,f^{(k)})=0$. Hence, if $c_{n,r,m}(g,s,f^{(k)})\neq0$, then $n,r\in\Z$ and $m\in N^2Z$. The Fourier expansion \eqref{Fourierexpansioneq} therefore reads
\begin{equation}\label{Fourierexpansioneq2}
 \mathbf{E}_\chi(g,s,f^{(k)})=\sum_{\substack{n,r\in\Z\\m\in N^2\Z}}c_{n,r,m}(g,s,f^{(k)})
\end{equation}
for $g\in\Sp(4,\R)$.

Using the Bruhat decomposition, we have
\begin{align}\label{setupeq4}
 \GSp(4,F)&=P(F)\sqcup P(F)s_2\begin{bsmallmatrix}1&\\&1&*\\&&1&\\&&&1\end{bsmallmatrix}\sqcup P(F)s_2s_1\begin{bsmallmatrix}1&*&&*\\&1\\&&1&*\\&&&1\end{bsmallmatrix}\sqcup P(F)s_2s_1s_2\begin{bsmallmatrix}1&&*&*\\&1&*&*\\&&1\\&&&1\end{bsmallmatrix}
\end{align}
for any field $F$. Hence
\begin{align}\label{setupeq5}
 \mathbf{E}_\chi(g,s,f^{(k)})&=f_s^{(k)}(g)+\sum_{\beta\in\Q}f_s^{(k)}(s_2\begin{bsmallmatrix}1&\\&1&\beta\\&&1&\\&&&1\end{bsmallmatrix}g)\nonumber\\
 &+\sum_{\rho,\delta\in\Q}f_s^{(k)}(s_2s_1\begin{bsmallmatrix}1&\rho&&\delta\\&1&&\\&&1&-\rho\\&&&1\end{bsmallmatrix}g)+\sum_{\beta,\gamma,\delta\in\Q}f_s^{(k)}(s_2s_1s_2\begin{bsmallmatrix}1&&\gamma&\delta\\&1&\beta&\gamma\\&&1\\&&&1\end{bsmallmatrix}g).
\end{align}
Substituting into \eqref{cPdefeq}, we see that
\begin{equation}\label{FCcalceq1}
 c_T(g,s,f^{(k)})=\sum_{i=1}^4c_{T,i}(g,s,f^{(k)})
\end{equation}
with
\begin{align}
 \label{FCcalceq2a}c_{T,1}(g,s,f^{(k)})&=\iiint\limits_{(\Q\backslash\A)^3}f_s^{(k)}(\begin{bsmallmatrix}1&&\mu&\kappa\\&1&\lambda&\mu\\&&1\\&&&1\end{bsmallmatrix}g)\psi(n\lambda+r\mu+m\kappa)^{-1}\,d\lambda\,d\mu\,d\kappa,\\
 \label{FCcalceq2b}c_{T,2}(g,s,f^{(k)})&=\iiint\limits_{(\Q\backslash\A)^3}\sum_{\beta\in\Q}f_s^{(k)}(s_2\begin{bsmallmatrix}1\\&1&\beta\\&&1\\&&&1\end{bsmallmatrix}\begin{bsmallmatrix}1&&\mu&\kappa\\&1&\lambda&\mu\\&&1\\&&&1\end{bsmallmatrix}g)\psi(n\lambda+r\mu+m\kappa)^{-1}\,d\lambda\,d\mu\,d\kappa,\\
 \label{FCcalceq2c}c_{T,3}(g,s,f^{(k)})&=\iiint\limits_{(\Q\backslash\A)^3}\sum_{\alpha,\delta\in\Q}f_s^{(k)}(s_2s_1\begin{bsmallmatrix}1&\alpha&&\delta\\&1&&\\&&1&-\alpha\\&&&1\end{bsmallmatrix}\begin{bsmallmatrix}1&&\mu&\kappa\\&1&\lambda&\mu\\&&1\\&&&1\end{bsmallmatrix}g)\psi(n\lambda+r\mu+m\kappa)^{-1}\,d\lambda\,d\mu\,d\kappa,\\
 \label{FCcalceq2d}c_{T,4}(g,s,f^{(k)})&=\iiint\limits_{(\Q\backslash\A)^3}\sum_{\beta,\gamma,\delta\in\Q}f_s^{(k)}(s_2s_1s_2\begin{bsmallmatrix}1&&\gamma&\delta\\&1&\beta&\gamma\\&&1\\&&&1\end{bsmallmatrix}\begin{bsmallmatrix}1&&\mu&\kappa\\&1&\lambda&\mu\\&&1\\&&&1\end{bsmallmatrix}g)\psi(n\lambda+r\mu+m\kappa)^{-1}\,d\lambda\,d\mu\,d\kappa.
\end{align}
Trivially, $c_{T,1}$ calculates to
\begin{equation}\label{FCcalceq3}
 c_{T,1}(g,s,f^{(k)})=\begin{cases}
                       f_s^{(k)}(g)&\text{if }T=0,\\
                       0&\text{if }T\neq0.
                      \end{cases}
\end{equation}
For $g=b_Z$, it follows from \eqref{bZeq2} and \eqref{indrepeq3} that
\begin{equation}\label{FCcalceq4}
 c_{T,1}(b_Z,s,f^{(k)})=\begin{cases}
                       \det(Y)^{\frac s2}&\text{if }T=0\text{ and }N=1,\\
                       0&\text{otherwise}.
                      \end{cases}
\end{equation}
(Recall from Sect.~\ref{Eisdefsec} that $1$ is not in the support of $f_s^{(k)}$ if $N>1$.) The other pieces are more difficult to calculate, but it can be done at least for $s=k\geq4$. For $T=0$ (the constant term) they can be calculated for any $s$, and we give the results below. For $\mathrm{rank}(T)=1$ the general answer involves Whittaker functions; below we only give the result for $s=k$, in which case the Whittaker functions become exponentials. The most difficult case (carried out in the following sections) is for $\mathrm{rank}(T)=2$, where an explicit answer is only possible for $s=k$.

Some of the $c_{T,i}$ are zero for global reasons, some are zero for $p$-adic reasons (if $N>1$), and some are zero for archimedean reasons after we set $s$ equal to $k$. The following table gives the values $c_{T,i}(b_Z,s,f^{(k)})$ (for arbitrary $s$ if $T=0$, for $s=k$ otherwise).
\begin{equation}\label{cTitable}
 \begin{array}{ccccc}
  \toprule
   \text{rank }T&c_{T,1}&c_{T,2}&c_{T,3}&c_{T,4}\\
  \toprule
   0&\eqref{FCcalceq4}&\eqref{cT2rank0eq}&\eqref{cT3rank0eq}&\eqref{cT4rank0eq}\\
   &&\scriptstyle(=0\text{ for }s\to k)&\scriptstyle(=0\text{ for }s\to k)&\scriptstyle(=0\text{ for }s\to k)\\
  \midrule
   1&0&\eqref{cT2rank1eq}&\eqref{cT3rank1eq}&0\\
  \midrule
   2&0&0&0&\eqref{rank2cT4eq46}\\
  \bottomrule
 \end{array}
\end{equation}
The results we state without proofs (which can be found in \cite{Pierce2025}) are as follows. For $T=0$,
\begin{align}
 \label{cT2rank0eq}c_{T,2}(b_Z,s,f^{(k)})&=\begin{cases}
                       \displaystyle\ii^k\,\det(Y)^{\frac s2}2^{2-s}\pi \frac{\Gamma(s-1)}{\Gamma(\frac{s-k}{2})\Gamma(\frac{s+k}{2})}\frac{\zeta(s-1)}{\zeta(s)}y^{1-s}&\text{if }N=1,\\
                       0&\text{if }N>1,
                      \end{cases}\\
 \label{cT3rank0eq}c_{T,3}(b_Z,s,f^{(k)})&=
   \begin{cases}
    \displaystyle\ii^k\det(Y)^{\frac{s}{2}}2^{2-s}\pi\frac{\Gamma(s-1)}{\Gamma(\frac{s+k}{2})\Gamma(\frac{s-k}{2})}\frac{\zeta(s-1)}{\zeta(s)}\left(\frac{\zeta_{Y}(s-1)}{2\zeta(2s-2)}-y^{1-s}\right)&\text{if }N=1,\\[3ex]
    \displaystyle(-\ii)^k\det(Y)^{\frac{s}{2}}2^{1-s}\pi\frac{\Gamma(s-1)}{\Gamma(\frac{s+k}{2})\Gamma(\frac{s-k}{2})}\frac{L(s-1,\eta)\zeta_{Y_N}(s-1,\eta)}{L(s,\eta)L(2s-2,\eta^2)}&\text{if }N>1,
   \end{cases}\\
 \label{cT4rank0eq}c_{T,4}(b_Z,s,f^{(k)})&=\begin{cases}
                       \displaystyle 2^{5-2s}\pi^{\frac{5}{2}}\frac{\det(Y)^{\frac{3-s}{2}}\Gamma(s-\frac{3}{2})\Gamma(s-2)}{\Gamma(\frac{s-k}{2})\Gamma(\frac{s+k}{2})\Gamma(\frac{s-k-1}{2})\Gamma(\frac{s+k-1}{2})}\frac{\zeta(s-2)\zeta(2s-3)}{\zeta(s)\zeta(2s-2)}&\text{if }N=1,\\
                       0&\text{if }N>1.
                      \end{cases}
\end{align}
Here, $Y_N=\mat{y}{vN}{vN}{y'N^2}$ and
\begin{equation}\label{epsteinzetalemma2eq1b}
        \zeta_{Y}(s,\eta):=\sum_{(0,0)\neq(\alpha,\gamma)\in\Z^2}\frac{\eta(\alpha\gamma)}{(y\alpha^2+2v\alpha\gamma+y'\gamma^2)^s}
\end{equation}
is an Epstein zeta function with character. For $\text{rank }T=1$,
\begin{align}
 \label{cT2rank1eq}c_{T,2}(b_Z,k,f^{(k)})&=
  \begin{cases}
   \displaystyle\frac{\det(Y)^{k/2}(2\pi \ii)^k \ee^{2\pi\ii n \tau}\sigma_{k-1}(e)}{(k-1)!\,\zeta(k)}  & \text{if }N=1,\:T=\mat{n}{0}{0}{0},\:n>0,\\
   0&\text{otherwise},
  \end{cases}\\
 \label{cT3rank1eq}c_{T,3}(b_Z,k,f^{(k)})&=
  \begin{cases}
   \displaystyle\frac{\det(Y)^{k/2}(-2\pi\ii)^k \ee^{2\pi\ii(n\tau+rz+m\tau')}\eta(r_{\scriptscriptstyle{\hat N}})\sigma_{k-1,\eta}(e_{\scriptscriptstyle \hat N})}{ (k-1)!\, e_{\scriptscriptstyle N}^{1-k}L(k,\eta) \eta(2_{\scriptscriptstyle{\hat N}})}
   &\text{if }m>0,\:r_{\scriptscriptstyle N}=\frac{(2m)_{\scriptscriptstyle N}}{N},\\
   0&\text{otherwise}.
  \end{cases}
\end{align}

Assume that $T$ has rank $2$. In this case it is easy to see that $c_{T,i}(g,s,f^{(k)})=0$ for $i=1,2,3$. The integral $c_{T,4}(g,s,f^{(k)})$ unfolds to
\begin{align}\label{rank2cT4eq1}
    c_{T,4}(g,s,f^{(k)})&=\int\limits_{\A}\int\limits_{\A}\int\limits_{\A}f_s^{(k)}(s_2s_1s_2\begin{bsmallmatrix}1&&\mu&\kappa\\&1&\lambda&\mu\\&&1\\&&&1\end{bsmallmatrix}g)\psi(n\lambda+r\mu+m\kappa)^{-1}\,d\lambda\,d\mu\,d\kappa=\prod_{p\leq\infty}I_p
\end{align}
with the obvious local integrals $I_p$. For finite $p$ these will be calculated in Sects.~\ref{unramcalcsec} (for $\chi_p$ unramified) and~\ref{ramcalcsec} (for $\chi_p$ ramified). The archimedean integral $I_\infty$ is the topic of the next section.
\subsection{The archimedean local integral}
In this section we evaluate the archimedean local integral
\begin{equation}\label{rank2cT4eq2}
 I_\infty(T,g)=\int\limits_{\R}\int\limits_{\R}\int\limits_{\R} f_{s,\infty}^{(k)}(s_2s_1s_2\begin{bsmallmatrix}1&&\mu&\kappa\\&1&\lambda&\mu\\&&1\\&&&1\end{bsmallmatrix}g)\ee^{-2\pi\ii(n\lambda+r\mu+m\kappa)}\,d\lambda\,d\mu\,d\kappa
\end{equation}
in the factorization \eqref{rank2cT4eq1}. Recall that we may assume $g$ is the element in \eqref{bZeq1b}. It is straightforward to verify that
\begin{equation}\label{rank2cT4realeq3}
 I_\infty(T,\mat{\hat A}{}{}{A}g)=\chi_\infty(\det(A))|\det(A)|^{s-3}I_\infty(A^{-1}T\,^t\!A^{-1},g)
\end{equation}
for any $A\in\GL(2,\R)$. In particular,
\begin{equation}\label{rank2cT4realeq4}
 I_\infty(T,\begin{bsmallmatrix}a&b\\&d\\&&d^{-1}&-\frac{b}{ad}\\&&&a^{-1}\end{bsmallmatrix})=|ad|^{3-s}I_\infty(T',1),\qquad T'=\mat{d^2n+dbr+b^2m}{\frac{adr+2abm}2}{\frac{adr+2abm}2}{a^2m}.
\end{equation}
Hence it is enough to calculate $I_{\infty}(T,1)$.

\begin{proposition}\label{rank2cT4realprop}
 If $s=k\geq2$, then
 \begin{equation}\label{rank2cT4realpropeq1}
  I_\infty(T,1)=
   \begin{cases}
    \displaystyle\frac{(4\pi)^{2k-1}\det(T)^{k-\frac32}\ee^{-2\pi\,{\rm Tr}(T)}}{2(2k-2)!}&\text{if $T$ is positive definite},\\
    0&\text{otherwise}.
   \end{cases}
 \end{equation}
\end{proposition}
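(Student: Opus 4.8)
The plan is to exploit the special value $s=k$ to collapse the section $f^{(k)}_{s,\infty}$ into a single power of an automorphy factor, and then to evaluate the resulting classical triple integral by iterated one-dimensional contour integration. The conceptual content is small; the work is in the bookkeeping of constants and phases.

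First I would simplify the integrand. For $g\in\Sp(4,\R)$, writing $g=b_Z\kappa$ with $Z=gI$ and $\kappa\in K_\infty$, the defining property \eqref{Jarcheq3} together with \eqref{alphakdefe3} and \eqref{detJbZIformulaeq} yields $f^{(k)}_{s,\infty}(g)=\det(\mathrm{Im}\,Z)^{(s-k)/2}\det(J(g,I))^{-k}$ (this is exactly the computation already carried out in \eqref{rewritesummationeq9} via Lemma~\ref{gammabZlemma}). In particular, at $s=k$ the modulus factor disappears and $f^{(k)}_{k,\infty}(g)=\det(J(g,I))^{-k}$. Applying this with $g=s_2s_1s_2\,n$, where $n=n(\lambda,\mu,\kappa)$ is the unipotent element appearing in \eqref{rank2cT4eq2}, reduces $I_\infty(T,1)$ to a purely classical integral.

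Next I would compute the automorphy factor explicitly. From \eqref{s1s2defeq} one has $s_2s_1s_2=\smallmat{0}{I_2}{-I_2}{0}$ in the $J_1$-model, so $s_2s_1s_2\,n=\smallmat{0}{I_2}{-I_2}{-S}$ with $S=\smallmat{\mu}{\kappa}{\lambda}{\mu}$. Passing to the $J_2$-model via $g\mapsto g'$, reading off the lower blocks $C,D$, and evaluating $J(g',Z)=CZ+D$ at $Z=I=\ii I_2$, I find $\det J(s_2s_1s_2\,n,I)=\mu^2-(\lambda+\ii)(\kappa+\ii)$, which equals $1-\det(S')-\ii\,\mathrm{Tr}(S')$ for $S'=\smallmat{\lambda}{\mu}{\mu}{\kappa}$. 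This base never lies in $(-\infty,0]$ (its imaginary part $-(\lambda+\kappa)$ vanishes only when $\kappa=-\lambda$, where the real part is $1+\lambda^2+\mu^2>0$), so the integer power is unambiguous, and
\[
 I_\infty(T,1)=\int_\R\int_\R\int_\R\bigl[\mu^2-(\lambda+\ii)(\kappa+\ii)\bigr]^{-k}\ee^{-2\pi\ii(n\lambda+r\mu+m\kappa)}\,d\lambda\,d\mu\,d\kappa.
\]
The relevant quadratic form on $\R^3$ is nondegenerate and $2k>3$ for $k\geq2$, so the triple integral converges absolutely and Fubini applies.

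Then I would evaluate the integrals in the order $\lambda,\mu,\kappa$. The $\lambda$-integrand is $(\text{affine in }\lambda)^{-k}$; its pole lies in the lower half-plane (one checks the relevant imaginary part is positive), so closing the contour shows the integral vanishes unless $n>0$ and otherwise equals an explicit residue. The $\mu$-integral then becomes a convergent Gaussian integral $\int_\R\ee^{A\mu^2+B\mu}\,d\mu$ with $\mathrm{Re}(A)<0$. Finally the $\kappa$-integral reduces to the Fourier transform of the half-integral power $(\kappa+\ii)^{-(k-1/2)}$, for which I would invoke the standard formula $\int_\R(\kappa+\ii)^{-\alpha}\ee^{-2\pi\ii t\kappa}\,d\kappa=\frac{(2\pi)^\alpha(-\ii)^\alpha}{\Gamma(\alpha)}t^{\alpha-1}\ee^{-2\pi t}$ for $t>0$, and $0$ for $t\leq0$. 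Here $t=\det(T)/n$, so (given $n>0$) the integral vanishes unless $\det(T)>0$; combined with $n>0$ this is precisely positive-definiteness of $T$, accounting for the vanishing in all other cases. The assembly is bookkeeping: all powers of $n$ cancel, the three exponentials combine to $\ee^{-2\pi\,\mathrm{Tr}(T)}$, and the accumulated complex phases cancel to leave the positive constant $2^{2k-1}\pi^{2k-1/2}/\bigl((k-1)!\,\Gamma(k-\tfrac12)\bigr)$; Legendre duplication $\Gamma(k)\Gamma(k-\tfrac12)=2^{2-2k}\sqrt\pi\,(2k-2)!$ then rewrites this as $(4\pi)^{2k-1}/\bigl(2(2k-2)!\bigr)$, giving the claim. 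I expect the main obstacle to be exactly this phase-and-branch tracking through the two contour integrals and the half-integral power, rather than any conceptual difficulty; the disappearance of $n$ from the final constant, and its consistency with the covariance \eqref{rank2cT4realeq4} (which forces dependence on $\det(T)$ and $\mathrm{Tr}(T)$ alone), serve as useful internal checks.
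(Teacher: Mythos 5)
Your proof is correct, and it takes a genuinely different route from the paper's. The paper first uses the ${\rm O}(2)$-covariance \eqref{rank2cT4realpropeq2} to reduce to diagonal $T$ (so $r=0$), then, keeping $s$ \emph{arbitrary}, applies the $\SL(2)$ Iwasawa decomposition \eqref{SL2NAKeq3} successively in $\lambda,\mu,\kappa$ inside the section to reach the intermediate formula \eqref{rank2cT4realpropeq6}, and only then sets $s=k$ and applies \eqref{intformula5}. You instead set $s=k$ at the outset, where the section collapses to the holomorphic automorphy factor $\det(J(g,I))^{-k}$ (the computation of \eqref{rewritesummationeq9}), so that $I_\infty(T,1)$ becomes the Fourier transform of $X\mapsto\det(X+\ii\mathbf{1}_2)^{-k}$ on ${\rm Sym}_2(\R)$, i.e.\ the degree-two case of Siegel's classical confluent hypergeometric integral; you then evaluate it for \emph{general} $T$, with no diagonalization, by a residue in $\lambda$, a complex Gaussian in $\mu$, and the half-integral-power Fourier transform in $\kappa$ (your $\kappa$-formula is equivalent to the paper's \eqref{intformula5} via $\kappa+\ii=\ii(1-\ii\kappa)$). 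I checked the bookkeeping: the automorphy factor is indeed $\mu^2-(\lambda+\ii)(\kappa+\ii)$, the total phase $\ii^k\cdot\ii^{-1/2}\cdot(-\ii)^{k-1/2}$ equals $1$, the powers of $n$ cancel, the exponentials combine to $\ee^{-2\pi\,{\rm Tr}(T)}$, the constant $2^{2k-1}\pi^{2k-1/2}/\bigl((k-1)!\,\Gamma(k-\tfrac12)\bigr)$ equals $(4\pi)^{2k-1}/\bigl(2(2k-2)!\bigr)$ by duplication, and the two vanishing conditions ($n>0$ from the $\lambda$-contour, then $\det(T)>0$ from the $\kappa$-integral) are exactly positive definiteness. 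What each approach buys: the paper's route produces \eqref{rank2cT4realpropeq6} for all $s$ (the form one would need for non-holomorphic specializations), at the price of the ${\rm O}(2)$ reduction; yours is shorter at $s=k$, keeps $T$ general so that consistency with the covariance \eqref{rank2cT4realeq4} is automatic, and makes the classical nature of the integral transparent.

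One side claim should be corrected, though it does not damage the argument: absolute convergence of the triple integral requires $k>2$, not $2k>3$. Indeed $\bigl|\mu^2-(\lambda+\ii)(\kappa+\ii)\bigr|^{-k}=\det(\mathbf{1}_2+X^2)^{-k/2}$ for $X=\smallmat{\lambda}{\mu}{\mu}{\kappa}$, and in eigenvalue coordinates (with $dX\propto|\sigma_1-\sigma_2|\,d\sigma_1\,d\sigma_2$) the region where one eigenvalue is large contributes $\int^{\infty}\sigma^{1-k}\,d\sigma$, which diverges for $k=2$. This is harmless here because you evaluate the iterated integral in exactly the nesting order $d\lambda\,d\mu\,d\kappa$ in which \eqref{rank2cT4eq2} is written, and each successive one-variable integral is absolutely convergent for $k\geq2$ (decay $|\lambda|^{-k}$, then a convergent Gaussian, then decay $|\kappa|^{\frac12-k}$), so Fubini is never actually invoked; simply delete that sentence, or assert absolute convergence only for $k\geq3$.
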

\begin{proof}
Since $\chi_\infty={\rm sgn}^k$, it follows from \eqref{alphakdefeq} and \eqref{rank2cT4realeq3} that
\begin{equation}\label{rank2cT4realpropeq2}
 I_\infty(T,1)=I_\infty(AT\,^t\!A,1)\qquad\text{for all }A\in{\rm O}(2).
\end{equation}
Since the right hand side of \eqref{rank2cT4realpropeq1} is also invariant under the transformation $T\mapsto AT\,^t\!A$ with $A\in{\rm O}(2)$, we may assume that $T$ is diagonal, i.e., that $r=0$. Hence we will calculate
\begin{equation}\label{rank2cT4realpropeq3}
 I_\infty(T,1)=\int\limits_\R\int\limits_\R\int\limits_\R f_{s,\infty}^{(k)}(s_2s_1s_2\begin{bsmallmatrix}1&&\mu&\kappa\\&1&\lambda&\mu\\&&1\\&&&1\end{bsmallmatrix})\ee^{-2\pi\ii(n\lambda+m\kappa)}\,d\lambda\,d\mu\,d\kappa.
\end{equation}
Our method is to repeatedly use the Iwasawa decomposition
\begin{equation}\label{SL2NAKeq3}
 \mat{1}{a}{}{1}=\mat{1}{}{x}{1}\mat{y^{-1/2}}{}{}{y^{1/2}}r(\theta),\qquad r(\theta)=\mat{\cos(\theta)}{\sin(\theta)}{-\sin(\theta)}{\cos(\theta)},
\end{equation}
with
\begin{equation}\label{SL2NAKeq4}
 x=\frac a{1+a^2},\qquad y=\frac1{1+a^2},\qquad \ee^{i\theta}=\frac{1+ia}{(1+a^2)^{1/2}},
\end{equation}
first on the $\lambda$-variable, then on $\mu$, then on $\kappa$. The result is the formula
\begin{align}\label{rank2cT4realpropeq6}
 I_\infty(T,1)&=\int\limits_\R\int\limits_\R\int\limits_\R (1+\ii\lambda)^{\frac{1-s+k}2}(1-\ii\lambda)^{\frac{1-s-k}2}\ee^{-2\pi\ii\lambda(n+m\mu^2)}\nonumber\\
 &\qquad (1+\mu^2)^{1-s}(1+\ii\kappa)^{\frac{-s+k}2}(1-\ii\kappa)^{\frac{-s-k}2} \ee^{-2\pi\ii m\kappa(1+\mu^2)}\,d\lambda\,d\mu\,d\kappa.
\end{align}
After setting $s=k$, our assertion follows easily using the integration formula
\begin{equation}\label{intformula5}
 \int\limits_\R(1-\ii x)^{-a}\ee^{-\ii px}\,dx=
 \begin{cases}
   \displaystyle\frac{2\pi p^{a-1}\ee^{-p}}{\Gamma(a)}&\text{if }p>0,\\[2ex]
   \displaystyle0&\text{if }p\leq0.
 \end{cases}
\end{equation}
for $\mathrm{Re}(a)>1$ (see 3.382.7 of \cite{GradRyzh2007}).
\end{proof}

\begin{corollary}\label{rank2cT4realpropcor}
 If $s=k\geq2$, then, with $b_Z$ as in \eqref{bZeq1},
 \begin{equation}\label{rank2cT4realpropcoreq1}
  I_\infty(T,b_Z)=
   \begin{cases}
    \displaystyle\det(Y)^{\frac k2}\frac{(4\pi)^{2k-1}\det(T)^{k-\frac32}\ee^{2\pi\ii\,{\rm Tr}(TZ)}}{2(2k-2)!}&\text{if $T$ is positive definite},\\
    0&\text{otherwise}.
   \end{cases}
 \end{equation}
\end{corollary}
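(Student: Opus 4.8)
The plan is to derive the corollary from Proposition~\ref{rank2cT4realprop} purely through the transformation behaviour of $I_\infty(T,\cdot)$, peeling the two factors of $b_Z$ off one at a time until only $I_\infty(T',1)$ remains. By \eqref{bZeq1} we may write $b_Z=n(X)\,m$, where $n(X)$ is the unipotent factor and $m$ is exactly the element occurring in \eqref{bZeq1b} and in the hypothesis of \eqref{rank2cT4realeq4}. So the computation reduces to three moves: absorbing $n(X)$, removing $m$ via \eqref{rank2cT4realeq4}, and quoting Proposition~\ref{rank2cT4realprop} with $s=k$.

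First I would treat the unipotent factor. In \eqref{rank2cT4eq2} the integration matrix and $n(X)$ both lie in the abelian unipotent radical of the Siegel parabolic, so their product merely adds upper-right blocks; left-translating $g$ by $n(X)$ is therefore undone by a shift of $\lambda,\mu,\kappa$. Reverting that shift, the additive character $\ee^{-2\pi\ii(n\lambda+r\mu+m\kappa)}$ produces a constant phase, which a direct bookkeeping (using $Z=b_Z I$ to read off ${\rm Re}(Z)$) identifies as $\ee^{2\pi\ii\,{\rm Tr}(T\,{\rm Re}(Z))}$; this is the local archimedean analogue of \eqref{cPtrafoeeq}. Hence $I_\infty(T,b_Z)=\ee^{2\pi\ii\,{\rm Tr}(T\,{\rm Re}(Z))}\,I_\infty(T,m)$. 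Next I would apply \eqref{rank2cT4realeq4} to strip off $m$, giving $I_\infty(T,m)=|ad|^{3-s}I_\infty(T',1)$, the matrix $T'$ of \eqref{rank2cT4realeq4} being ${}^t\!A\,T\,A$ with $A=\mat{d}{0}{b}{a}$; since $A$ is invertible, $T'$ is positive definite exactly when $T$ is, which accounts for the two cases in the statement. Setting $s=k$ and inserting Proposition~\ref{rank2cT4realprop} then turns everything into algebra.

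It remains to simplify, and this is the only step needing care. From \eqref{bZeq2} one checks the three identities $\det(Y)=(ad)^2$, $\det(T')=(ad)^2\det(T)=\det(Y)\det(T)$, and ${\rm Tr}(T')={\rm Tr}({}^t\!A\,T\,A)={\rm Tr}(T\,A\,{}^t\!A)={\rm Tr}(TY)$, the last because $Y=A\,{}^t\!A$. The powers of $\det(Y)$ then add as $\tfrac{3-k}{2}+\bigl(k-\tfrac32\bigr)=\tfrac{k}{2}$, producing the prefactor $\det(Y)^{k/2}$, while $\det(T')^{k-3/2}$ supplies $\det(T)^{k-3/2}$. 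Finally the two exponentials merge: the unipotent phase gives $\ee^{2\pi\ii\,{\rm Tr}(T\,{\rm Re}(Z))}$ and the Proposition's $\ee^{-2\pi\,{\rm Tr}(T')}=\ee^{-2\pi\,{\rm Tr}(T\,{\rm Im}(Z))}$ contributes the imaginary part, so together they equal $\ee^{2\pi\ii\,{\rm Tr}(TZ)}$. I expect the main (though modest) obstacle to be exactly this final assembly: keeping the identification $Z=b_Z I$ straight so that the real part from $n(X)$ and the exponential factor from $m$ combine correctly into ${\rm Tr}(TZ)$, alongside the $\det(Y)$-exponent arithmetic.
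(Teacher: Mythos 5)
Your proposal is correct and takes essentially the same route as the paper's own (one-line) proof, which cites exactly the ingredients you use: the unipotent translation property \eqref{cPtrafoeeq} for the phase $\ee^{2\pi\ii\,{\rm Tr}(T\,{\rm Re}(Z))}$, the reduction \eqref{rank2cT4realeq4} to $I_\infty(T',1)$, Proposition~\ref{rank2cT4realprop} at $s=k$, and the relations \eqref{bZeq2}. Your detailed assembly — $T'={}^t\!A\,T\,A$ and $Y=A\,{}^t\!A$ with $A=\mat{d}{0}{b}{a}$, hence $\det(T')=\det(Y)\det(T)$, ${\rm Tr}(T')={\rm Tr}(TY)$, and the exponent count $\tfrac{3-k}{2}+(k-\tfrac32)=\tfrac k2$ — is exactly the bookkeeping the paper leaves to the reader, and it checks out.
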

\begin{proof}
This follows from Proposition \ref{rank2cT4realprop}, \eqref{rank2cT4realeq4}, \eqref{cPtrafoeeq} and \eqref{bZeq2}.
\end{proof}

\subsection{The local integral at good places}
The following lemma is well known. Recall that an integer $D$ is a fundamental discriminant if either $D$ is square-free and $D\equiv1\bmod4$, or $D=4D'$ with $D'$ square-free and $D'\equiv2,3\bmod4$.

\begin{lemma}\label{funddisclemma}
 Let $M$ be a non-zero integer with $M\equiv0,1$ mod $4$. Then $M=Df^2$ with a (uniquely determined) fundamental discriminant $D$ and an integer $f$.
\end{lemma}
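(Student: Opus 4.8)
The plan is to reduce everything to the square-free part of $M$. By unique factorization, every non-zero integer $M$ can be written uniquely as $M = M_0 e^2$ with $M_0$ square-free and $e$ a positive integer; here $M_0$ is obtained by keeping the sign of $M$ together with those primes occurring to an odd power in the factorization of $M$. Since $M_0$ is manifestly determined by $M$, the strategy is to express the fundamental discriminant $D$ entirely in terms of $M_0$, which will yield existence and uniqueness simultaneously.

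For existence I would argue case by case according to $M_0 \bmod 4$, noting that $M_0$ being square-free rules out $M_0 \equiv 0 \bmod 4$. If $M_0 \equiv 1 \bmod 4$, then $M_0$ is itself a fundamental discriminant, and I set $D = M_0$, $f = e$; here the hypothesis $M \equiv 0,1 \bmod 4$ is automatic because $e^2 \equiv 0,1 \bmod 4$. If $M_0 \equiv 2,3 \bmod 4$, I would first use the hypothesis $M \equiv 0,1 \bmod 4$ to force $e$ to be even: were $e$ odd we would have $e^2 \equiv 1$ and hence $M = M_0 e^2 \equiv M_0 \equiv 2,3 \bmod 4$, a contradiction. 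Writing $e = 2f$ then gives $M = 4 M_0 f^2$, and $D = 4 M_0$ is a fundamental discriminant of the second type (with $D' = M_0$).

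For uniqueness I would exploit that the square-free part is unchanged by multiplication by a perfect square: if $M = D f^2$ with $D$ fundamental, then writing $D = D_0 c^2$ with $D_0$ square-free gives $M = D_0 (cf)^2$, so the square-free part of $M$ equals the square-free part of $D$, namely $M_0$. It then suffices to compute the square-free part of each kind of fundamental discriminant and check that its residue mod $4$ pins down both the type and the value of $D$. When $D \equiv 1 \bmod 4$ is square-free, its square-free part is $D$ itself, with residue $1$; when $D = 4D'$ with $D'$ square-free and $D' \equiv 2,3 \bmod 4$, the factor $4 = 2^2$ is a square, so the square-free part is $D'$, with residue $2$ or $3$. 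Hence $M_0 \equiv 1 \bmod 4$ forces $D = M_0$, while $M_0 \equiv 2,3 \bmod 4$ forces $D = 4 M_0$; in either case $D$ is determined by $M_0$, and therefore by $M$, and $f$ is then determined up to sign by $f^2 = M/D$.

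The only genuinely delicate point is the bookkeeping around the factor of $4$: one must verify both that the mod-$4$ hypothesis on $M$ forces the parity of $e$ in the second case, and that the two families of fundamental discriminants are cleanly separated by the residue class mod $4$ of their square-free parts (in particular that no square-free $D'$ with $D' \equiv 2,3 \bmod 4$ can be congruent to $1$). Once these congruence checks are in place, existence and uniqueness both fall out of the single identity that the square-free part of $M$ equals the square-free part of $D$.
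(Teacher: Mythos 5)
Your proof is correct. Note that the paper itself gives no argument for this lemma---it is simply stated as ``well known''---so there is no proof to compare against; your argument via the unique decomposition $M = M_0 e^2$ into square-free part and square, with the case split on $M_0 \bmod 4$ handling both existence and uniqueness, is the standard one and is complete (including the key observations that the hypothesis $M\equiv 0,1 \bmod 4$ forces $e$ to be even when $M_0\equiv 2,3\bmod 4$, and that the two types of fundamental discriminants are separated by the residue class of their square-free parts).
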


Recall that $\Delta=4nm-r^2$. We will apply the lemma to $-\Delta=r^2-4nm$. Hence
\begin{equation}\label{DeltaDeq1}
 -\Delta=Df^2
\end{equation}
with a fundamental discriminant $D$ and an integer $f$, which we assume to be positive. Clearly $\Q(\sqrt{-\Delta})=\Q(\sqrt{D})$, and $\Q_p(\sqrt{-\Delta})=\Q_p(\sqrt{D})$ for each prime~$p$. It is an exercise to show that
\begin{equation}\label{DeltaDeq2}
 d_p(\Delta):=v_p\left(\text{discriminant of the extension }\Q_p(\sqrt{-\Delta})/\Q_p\right)=v_p(D).
\end{equation}
Explicitly, $d_p(\Delta)=0$ if the extension is unramified, $d_p(\Delta)=1$ if the extension is ramified and $p$ is odd, $d_p(\Delta)=2$ if $p=2$ and $\Q_2(\sqrt{-\Delta})=\Q_2(\sqrt{u})$ with $u\in\{3,7\}$, and $d_p(\Delta)=3$ if $p=2$ and $\Q_2(\sqrt{-\Delta})=\Q_2(\sqrt{2u})$ with $u\in\{1,3,5,7\}$. Let
\begin{equation}\label{edefeq}
 e=\gcd(n,r,m).
\end{equation}
Clearly $e^2\mid\Delta$. We claim that
\begin{equation}\label{efeq}
 e\mid f.
\end{equation}
Indeed, we have $-\Delta=r^2-4nm=e^2(r'^2-4n'm')$. Applying Lemma~\ref{funddisclemma} to $r'^2-4n'm'$, we get $r'^2-4n'm'=D'f'^2$ with a fundamental discriminant $D'$. Hence $-\Delta=D'(ef')^2$, and we see that $D'=D$ and $ef'=f$.

For later use we write the prime factorizations of $e$ and $f$ as
\begin{equation}\label{efprimefaceq}
 e=\prod p^{e_p},\qquad f=\prod p^{f_p}.
\end{equation}
We have $e_p\leq f_p$ by \eqref{efeq}. 

Let $\chi_D=\big(\frac D\cdot\big)$ be the quadratic Dirichlet character corresponding to the extension $\Q(\sqrt{D})/\Q$. Then
\begin{equation}\label{Lpdefeq}
  \chi_D(p)=\begin{cases}
     1&\text{if $\Q_p(\sqrt{-\Delta})=\Q_p$},\\
     -1&\text{if $\Q_p(\sqrt{-\Delta})/\Q_p$ is unramified quadratic},\\
     0&\text{if $\Q_p(\sqrt{-\Delta})/\Q_p$ is ramified}.
    \end{cases}
\end{equation}
Recall that $n_p=a(\chi_p)$, and that $\eta$ is the primitive Dirichlet character corresponding to $\chi$.

\begin{corollary}\label{rank2cT4np0cor}
 Suppose that $n_p=0$ and $\mathrm{rank}(T)=2$. Then the local integral $I_p$ in \eqref{rank2cT4eq1} is given by
 \begin{align}\label{rank2cT4np0coreq2}
  I_p&=\frac{(1-\eta(p)p^{-s})(1-\eta(p)^2p^{2-2s})}{1-\chi_D(p)\eta(p)p^{1-s}}p^{f_p(3-2s)}\eta(p)^{2f_p}\nonumber\\
  &\sum_{i=0}^{e_p}\eta(p)^{-i}p^{i(s-1)}\Bigg(\sum_{j=0}^{f_p-i}\eta(p)^{-2j}p^{j(2s-3)}-\chi_D(p)\eta(p)^{-1}p^{s-2}\sum_{j=0}^{f_p-i-1}\eta(p)^{-2j}p^{j(2s-3)}\Bigg).
 \end{align}
\end{corollary}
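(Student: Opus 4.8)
The plan is to evaluate the local integral $I_p$ appearing in the factorization \eqref{rank2cT4eq1} for the unramified case $n_p=0$, where $\chi_p$ is unramified and $f_{s,p}=f_{s,p}^\circ$ is the spherical vector. Since $I_p$ is a $p$-adic integral over $\Q_p^3$ of a spherical section composed with the long Weyl element $s_2s_1s_2$ and a unipotent matrix, my first step is to make the integration explicit using the Iwasawa decomposition in $\GSp(4,\Q_p)$. I would push the unipotent part $\begin{bsmallmatrix}1&&\mu&\kappa\\&1&\lambda&\mu\\&&1\\&&&1\end{bsmallmatrix}$ through the Weyl element and repeatedly extract the $P(\Q_p)$-part, so that $f_{s,p}^\circ$ evaluates (via the transformation law \eqref{indrepeq3}, using $\chi_p$ unramified and $f_{s,p}^\circ$ right $K_p$-invariant) to an explicit power of $p$ times a power of $\chi_p(p)=\eta(p)$ depending on the valuations of $\lambda,\mu,\kappa$.

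\emph{Next I would} reduce the three-dimensional integral to a sum over lattice points. After the Iwasawa reduction, the integrand becomes a locally constant function depending only on the $p$-adic valuations of the coordinates, multiplied by the additive character $\psi_p(n\lambda+r\mu+m\kappa)^{-1}$. Integrating over $\Q_p$ against $\psi_p$ enforces integrality conditions that truncate the sum; the character values against $\psi_p$ will impose the congruences governing the range of summation. The key arithmetic input is that the integral organizes itself according to the quadratic extension $\Q_p(\sqrt{-\Delta})$, so that $\chi_D(p)$ enters exactly as in \eqref{Lpdefeq}: the ramified, inert, and split cases produce the three possibilities for $\chi_D(p)\in\{0,\pm1\}$, and the local density of representations of $T$ by the relevant quadratic form accounts for the factor $1/(1-\chi_D(p)\eta(p)p^{1-s})$ together with the geometric-series structure in the displayed double sum.

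\emph{The main obstacle} I anticipate is the bookkeeping of the double sum over $i$ and $j$ with the correct exponents and the cutoffs $e_p$ and $f_p$. The outer index $i$ (ranging to $e_p=v_p(\gcd(n,r,m))$) should arise from the ``content'' of $T$ at $p$, while the inner index $j$ (ranging to $f_p-i$, with $f_p=v_p(f)$) should track the square part of $-\Delta$ after factoring out the content; the inequality $e_p\le f_p$ from \eqref{efeq} guarantees these ranges are nonnegative. Getting the two shifted geometric sums—one full sum and one with the extra factor $-\chi_D(p)\eta(p)^{-1}p^{s-2}$ and upper limit $f_p-i-1$—to match requires carefully tracking how the local zeta integral of the quadratic form splits according to whether the relevant unipotent contribution lands in the maximal order of $\Q_p(\sqrt{-\Delta})$. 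I would verify the result against the known unramified local Siegel series (e.g.\ the Shimura/Katsurada local density formulas) as a consistency check, and confirm the leading numerator $(1-\eta(p)p^{-s})(1-\eta(p)^2p^{2-2s})$ matches the expected normalization coming from the spherical vector and the local factors of $L(s,\eta)$ and $L(2s-2,\eta^2)$ appearing globally.

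\emph{Finally}, once the sum is assembled, I would confirm the overall prefactor $p^{f_p(3-2s)}\eta(p)^{2f_p}$ emerges from the normalization \eqref{PMieq2} together with the scaling behavior \eqref{rank2cT4realeq3} of $I_p$ under $\GL(2,\Q_p)$, which dictates the homogeneity degree $s-3$ in $\det$; specializing or keeping $s$ general is unproblematic here since, as noted in the introduction, the unramified integral is elementary for all $s$, unlike its archimedean counterpart in Proposition~\ref{rank2cT4realprop}.
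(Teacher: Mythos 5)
Your proposal has the right general shape for a from-scratch evaluation, but it stops short of being a proof, and it misses the one idea that makes this statement a \emph{corollary} rather than a theorem. In the paper, the entire unramified computation is carried out once, for the \emph{trivial} character, in Theorem~\ref{rank2goodtheorem}: the integral is split into pieces via \eqref{usefulidentity}, the quadratic expressions lead to the volumes of the sets $R(i,j)$, $S(i,j)$ (Lemma~\ref{Rij0lemma}), and generating series (Propositions~\ref{Rijprop}, \ref{Rijprop2}) assemble the double sum. The corollary then follows in one line: since $\chi_p$ is unramified, writing $\chi_p(p)=p^{-s_0}$ gives $\chi_p=|\cdot|^{s_0}$ and hence $J_{\chi_p}(s)=J_1(s+s_0)$, so $I_p$ is obtained from the trivial-character formula \eqref{rank2goodtheoremeq1} by the substitution $s\mapsto s+s_0$ (every $p^{-s}$ becomes $\eta(p)p^{-s}$, every $p^{2-2s}$ becomes $\eta(p)^2p^{2-2s}$, etc.), together with the identifications $L=\chi_D(p)$ and $\frac{v_p(\Delta)-d_p(\Delta)}2=f_p$ from \eqref{DeltaDeq1}. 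Your plan instead proposes to redo the whole computation with the character carried along; that is legitimate in principle, but it is exactly the hard part, and your write-up defers it: ``bookkeeping of the double sum,'' ``carefully tracking,'' and finally ``verify the result against the known unramified local Siegel series\dots as a consistency check.'' A consistency check against external density formulas is not a derivation; the double sum over $i\leq e_p$ and $j\leq f_p-i$ \emph{is} the content of the statement, and nothing in the proposal actually produces it.

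Two specific claims would also fail as written. First, after the Iwasawa reduction the integrand is not ``a locally constant function depending only on the $p$-adic valuations of the coordinates'': it depends on the valuations of quadratic expressions such as $n+m\mu^2$ (see \eqref{rank2goodeq20}), and it is precisely the volumes of the level sets of these expressions --- governed by whether $-nm$ is a square in $\Q_p^\times$, i.e.\ by $\Q_p(\sqrt{-\Delta})$ --- that produce the $\chi_D(p)$-dependence; valuations of the coordinates alone would lose it entirely. Second, the prefactor $p^{f_p(3-2s)}\eta(p)^{2f_p}$ does not ``emerge from the normalization \eqref{PMieq2} together with the scaling behavior \eqref{rank2cT4realeq3}'': equation \eqref{PMieq2} concerns ramified places (here $n_p=0$ and $f_{s,p}$ is simply the spherical vector with $f_{s,p}(1)=1$), equation \eqref{rank2cT4realeq3} is an archimedean identity, and no substitution $T\mapsto AT\,^t\!A$ with $A\in\GL(2,\Q_p)$ rescales $T$ by $p$ --- indeed $e_p$, $f_p$ and $\chi_D(p)$ are $\GL(2,\Z_p)$-invariants, so no scaling argument can generate the dependence on them. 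That prefactor is an output of the full computation (it is $p^{\frac{v(\Delta)-d(\Delta)}2(3-2s)}$ in \eqref{rank2goodtheoremeq1}, twisted by $s\mapsto s+s_0$), not of any normalization.
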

\begin{proof}
Observe that if $s_0$ is such that $\chi_p(p)=p^{-s_0}$, then $J_{\chi_p}(s)=J_1(s+s_0)$. Hence the assertion follows by replacing $s$ by $s+s_0$ in Proposition \ref{rank2goodtheorem}, observing that the quantity $L$ in \eqref{rank2goodtheoremeq1} equals $\chi_D(p)$, that $\chi_p(p)=\eta(p)$, and that $\frac{v_p(\Delta)-d_p(\Delta)}2=f_p$ by \eqref{DeltaDeq1}.
\end{proof}

\subsection{The Fourier coefficients for rank 2}
We will now calculate $c_{T,4}$, given in \eqref{rank2cT4eq1}, assuming $\mathrm{rank} (T)=2$. Recall that $I_\infty$ is given in \eqref{rank2cT4realpropcoreq1} and $I_p$ for finite $p$ is given in \eqref{rank2cT4np0coreq2} (if $p\nmid N$) and \eqref{rank2badeq5} (for $p\mid N$). If $T$ is not positive definite, then $c_{T,4}(b_Z,s,f^{(k)})=0$ by~\eqref{rank2cT4realpropcoreq1}. Assume in the following that $T$ is positive definite. Then
\begin{align}\label{rank2cT4eq31}
  \prod_{p<\infty}I_p=&\prod_{\substack{p<\infty\\n_p=0}}\Bigg(\frac{(1-\eta(p)p^{-s})(1-\eta(p)^2p^{2-2s})}{1-\chi_D(p)\eta(p)p^{1-s}}p^{f_p(3-2s)}\eta(p)^{2f_p}\nonumber\\
    &\hspace{3ex}\sum_{i=0}^{e_p}\eta(p)^{-i}p^{i(s-1)}\Bigg(\sum_{j=0}^{f_p-i}\eta(p)^{-2j}p^{j(2s-3)}-\chi_D(p)\eta(p)^{-1}p^{s-2}\sum_{j=0}^{f_p-i-1}\eta(p)^{-2j}p^{j(2s-3)}\Bigg)\Bigg)\nonumber\\
    &\prod_{\substack{p<\infty\\n_p>0}}\Bigg(\delta_{v_p(m)\geq2n_p}\,p^{n_p(5/2-2s)}\varepsilon(1/2,\chi_p,\psi_p)\nonumber\\
    &\hspace{3ex}\left(\delta_{v_p(r)=0}\chi_p(-r)+\delta_{v_p(r)>0}\,p^{n_p(2-s)}\chi_p(-p^{n_p})K(s,T,\chi_p)\right)\Bigg).
\end{align}
Because of the factor $\delta_{v_p(m)\geq2n_p}$, this is zero unless $N^2\mid m$. In the following we assume that $N^2\mid m$. Let us set, for integers $e,f$ with $e\mid f$,
\begin{equation}\label{tildeHetaeq1}
 \tilde H_{D,s,\eta}(e,f)=\sum_{d\mid e}\eta(d)^{-1}d^{s-1}\sum_{g\mid\frac fd}\mu(g)\chi_D(g)\eta(g)^{-1}g^{s-2}\sum_{h\mid\frac{f}{dg}}\eta(h)^{-2}h^{2s-3}.
\end{equation}
It is easy to confirm that $\tilde H_{D,s,\eta}(e,f)$ is a multiplicative function, i.e.,
\begin{equation}\label{tildeHetaeq2}
 \tilde H_{D,s,\eta}(e_1,f_1)\tilde H_{D,s,\eta}(e_2,f_2)=\tilde H_{D,s,\eta}(e_1e_2,f_1f_2)\qquad\text{if }e_1\mid f_1,\:e_2\mid f_2,\:\gcd(f_1,f_2)=1.
\end{equation}
Recall from \eqref{efprimefaceq} that $e=\prod p^{e_p}$ and $f=\prod p^{f_p}$. Using the notation \eqref{Nnotation}, we have
\begin{align}\label{rank2cT4eq32}
  \prod_{p<\infty}I_p&=N^{\frac52-2s}\prod_{\substack{p<\infty\\n_p=0}}\Bigg(\frac{(1-\eta(p)p^{-s})(1-\eta(p)^2p^{2-2s})}{1-\chi_D(p)\eta(p)p^{1-s}}p^{f_p(3-2s)}\eta(p)^{2f_p}\tilde H_{D,s,\eta}(p^{e_p},p^{f_p})\Bigg)\nonumber\\
    &\qquad\prod_{\substack{p<\infty\\n_p>0}}\Bigg(\varepsilon(1/2,\chi_p,\psi_p)\left(\delta_{v_p(r)=0}\chi_p(-r)+\delta_{v_p(r)>0}\,p^{n_p(2-s)}\chi_p(-p^{n_p})K(s,T,\chi_p)\right)\Bigg)\nonumber\\
  &=N^{\frac52-2s} f_{\scriptscriptstyle\hat N}^{3-2s}\eta( f_{\scriptscriptstyle\hat N}^2)\tilde H_{D,s,\eta}( e_{\scriptscriptstyle\hat N}, f_{\scriptscriptstyle\hat N})\nonumber\\
    &\qquad\prod_{p<\infty}\Bigg(\frac{(1-\eta(p)p^{-s})(1-\eta(p)^2p^{2-2s})}{1-\chi_D(p)\eta(p)p^{1-s}}\Bigg)\Bigg(\prod_{p<\infty}\varepsilon(1/2,\chi_p,\psi_p)\Bigg)\nonumber\\
    &\qquad\prod_{\substack{p<\infty\\n_p>0}}\Bigg(\left(\delta_{v_p(r)=0}\chi_p(-r)+\delta_{v_p(r)>0}\,p^{n_p(2-s)}\chi_p(-p^{n_p})K(s,T,\chi_p)\right)\Bigg)\nonumber\\
  &\stackrel{\eqref{Dirichletcorrespondenceeq4}, \eqref{Jarcheq6}}{=}(-1)^kN^{\frac52-2s} f_{\scriptscriptstyle\hat N}^{3-2s}\eta( f_{\scriptscriptstyle\hat N}^2)\tilde H_{D,s,\eta}( e_{\scriptscriptstyle\hat N}, f_{\scriptscriptstyle\hat N})\frac{L(s-1,\chi_D\eta)}{L(s,\eta)L(2s-2,\eta^2)}\nonumber\\
    &\qquad\Bigg(\prod_{p<\infty}\varepsilon(1/2,\chi_p,\psi_p)\Bigg)\Bigg(\prod_{\substack{p<\infty\\p\mid N\\p\nmid r}}\chi_p(r)\Bigg)\Bigg(\prod_{\substack{p<\infty\\p\mid N\\p\mid r}}p^{n_p(2-s)}\chi_p(p^{n_p})K(s,T,\chi_p)\Bigg).
\end{align}
Note here that by $\chi_D\eta$ we mean the Dirichlet character mod $|D|N$ defined by $(\chi_D\eta)(x)=\chi_D(x)\eta(x)$ for $x$ relatively prime to both $|D|$ and~$N$. There is no guarantee that $\chi_D\eta$ is primitive. Similarly, $\eta^2$, which is a Dirichlet character $\bmod~N$, need not be primitive.

An exercise using the formula \eqref{epsilonfactorformula} shows that
\begin{equation}\label{rank2cT4eq43}
 \prod_{p<\infty}\varepsilon(1/2,\chi_p,\psi_p)=\frac{(-1)^kG(\eta)}{\sqrt{N}},
\end{equation} where $G(\eta)=\sum_{k=0}^{N-1}\eta(k)\ee^{2\pi\ii k/N}$ is a classical Gauss sum. Substituting into \eqref{rank2cT4eq32}, we get
\begin{align}\label{rank2cT4eq44}
  \prod_{p<\infty}I_p&=N^{2-2s} f_{\scriptscriptstyle\hat N}^{3-2s}\eta( f_{\scriptscriptstyle\hat N}^2)\tilde H_{D,s,\eta}( e_{\scriptscriptstyle\hat N}, f_{\scriptscriptstyle\hat N})G(\eta)\frac{L(s-1,\chi_D\eta)}{L(s,\eta)L(2s-2,\eta^2)}\nonumber\\
    &\qquad\Bigg(\prod_{\substack{p<\infty\\p\mid N\\p\nmid r}}\chi_p(r)\Bigg)\Bigg(\prod_{\substack{p<\infty\\p\mid N\\p\mid r}}p^{n_p(2-s)}\chi_p(p^{n_p})K(s,T,\chi_p)\Bigg).
\end{align}
Now we factor in the archimedean integral, which we recall from \eqref{rank2cT4realpropcoreq1} was only calculated under the condition $s=k$. Assuming $T$ is positive definite, we get from \eqref{rank2cT4eq1}, \eqref{rank2cT4realpropcoreq1} and \eqref{rank2cT4eq44}
\begin{align}\label{rank2cT4eq46}
  &c_{T,4}(b_Z,k,f^{(k)})=\det(Y)^{\frac k2}\frac{(4\pi)^{2k-1}\det(T)^{k-\frac32}\ee^{2\pi\ii\,{\rm Tr}(TZ)}}{2(2k-2)!}N^{2-2k} f_{\scriptscriptstyle\hat N}^{3-2k}\eta( f_{\scriptscriptstyle\hat N}^2)\tilde H_{D,k,\eta}( e_{\scriptscriptstyle\hat N}, f_{\scriptscriptstyle\hat N})\nonumber\\
    &\qquad\frac{L(k-1,\chi_D\eta)}{L(k,\eta)L(2k-2,\eta^2)}G(\eta)\Bigg(\prod_{\substack{p<\infty\\p\mid N\\p\nmid r}}\chi_p(r)\Bigg)\Bigg(\prod_{\substack{p<\infty\\p\mid N\\p\mid r}}p^{n_p(2-k)}\chi_p(p^{n_p})K(k,T,\chi_p)\Bigg).
\end{align}

\begin{theorem}\label{Fourierexpansiontheorem}
 Let $k\geq4$ be an integer, and let $\eta$ be a primitive Dirichlet character of conductor~$N=\prod p^{n_p}$. Then the function $E_{k,\eta}(Z)$ defined in \eqref{rewritesummationeq12} has a Fourier expansion
 \begin{equation}\label{Fourierexpansiontheoremeq1}
  E_{k,\eta}(Z)=\delta_{\eta=1}+\sum_{T\neq0}a(T)\ee^{2\pi\ii\trace(TZ)},
 \end{equation}
 where $T$ runs over non-zero positive semi-definite matrices $\mat{n}{r/2}{r/2}{m}$ with integers $n,r,m$ such that $N^2\mid m$, and where, for $\mathrm{rank}(T)=1$,
 \begin{align}\label{Fourierexpansiontheoremeq1a}
  a(T)&=\frac{(-2\pi \ii)^k}{(k-1)!}
  \begin{cases}
    \displaystyle\frac{ \sigma_{k-1}(n)}{\zeta(k)}  & \text{if }N=1,\:T=\mat{n}{0}{0}{0},\:n>0,\\[1ex] 
   \displaystyle \frac{\sigma_{k-1,\eta}(e_{\scriptscriptstyle \hat N})}{L(k,\eta)}\frac{\eta(r_{\scriptscriptstyle{\hat N}})}{\eta(2_{\scriptscriptstyle{\hat N}})}e_{\scriptscriptstyle N}^{k-1}
   &\text{if }m>0,\:r_{\scriptscriptstyle N}=\frac{(2m)_{\scriptscriptstyle N}}{N},\\
   0&\text{otherwise},
  \end{cases}
\end{align}
and for $\mathrm{rank}(T)=2$,
 \begin{align}\label{Fourierexpansiontheoremeq2}
  &a(T)=\frac{(4\pi)^{2k-1}\det(T)^{k-\frac32}}{2(2k-2)!}N^{2-2k} f_{\scriptscriptstyle\hat N}^{3-2k}\eta( f_{\scriptscriptstyle\hat N}^2)\tilde H_{D,k,\eta}( e_{\scriptscriptstyle\hat N}, f_{\scriptscriptstyle\hat N})\nonumber\\
    &\qquad\frac{L(k-1,\chi_D\eta)}{L(k,\eta)L(2k-2,\eta^2)}G(\eta)\Bigg(\prod_{\substack{p<\infty\\p\mid N\\p\nmid r}}\chi_p(r)\Bigg)\Bigg(\prod_{\substack{p<\infty\\p\mid N\\p\mid r}}p^{n_p(2-k)}\chi_p(p^{n_p})K(k,T,\chi_p)\Bigg).
 \end{align}
 Here, $r^2-4nm=Df^2$ as in Lemma~\ref{funddisclemma}, $e=\gcd(n,r,m)$, and we employ the notations \eqref{Nnotation} and \eqref{divisorsumwithcharacter}.
\end{theorem}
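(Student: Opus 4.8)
The plan is to assemble the Fourier expansion from the piecewise data already accumulated in the preceding sections, converting the adelic Fourier coefficients $c_{T,i}(b_Z,k,f^{(k)})$ into the classical coefficients $a(T)$ via the descent formula \eqref{descenteq3}. The starting point is the decomposition \eqref{FCcalceq1} together with the table \eqref{cTitable}, which records, rank by rank, exactly which of the four pieces $c_{T,i}$ survive at $s=k$. First I would dispose of the constant term: by \eqref{FCcalceq4} and the three displayed formulas \eqref{cT2rank0eq}, \eqref{cT3rank0eq}, \eqref{cT4rank0eq}, the rank-$0$ contributions all vanish as $s\to k$ except when $N=1$, where the archimedean gamma factors $\Gamma(\frac{s-k}{2})^{-1}$ force everything but $c_{T,1}$ to zero and leave the constant $\det(Y)^{k/2}$. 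After applying descent, this produces the term $\delta_{\eta=1}$ in \eqref{Fourierexpansiontheoremeq1}.

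For $\mathrm{rank}(T)=1$, only $c_{T,2}$ and $c_{T,3}$ are nonzero, and these are given outright in \eqref{cT2rank1eq} and \eqref{cT3rank1eq}. I would simply read off the classical coefficient: by \eqref{descenteq3} the factor $\det(Y)^{k/2}$ is stripped away and the exponential $\ee^{2\pi\ii\,\mathrm{Tr}(TZ)}$ is separated out, leaving precisely the two cases of \eqref{Fourierexpansiontheoremeq1a}. The case distinctions ($N=1$ with $T=\smallmat{n}{0}{0}{0}$ versus the condition $r_{\scriptscriptstyle N}=(2m)_{\scriptscriptstyle N}/N$) are inherited directly from the stated formulas, so this rank is essentially bookkeeping; one checks that the two contributions never overlap for a fixed $T$, since the first requires $N=1$ while the second governs the ramified behavior.

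For $\mathrm{rank}(T)=2$, the substantive content has already been carried out: Corollary~\ref{rank2cT4realpropcor} supplies $I_\infty$, Corollary~\ref{rank2cT4np0cor} supplies $I_p$ at good primes, and \eqref{rank2badeq5} supplies it at bad primes. The main work is the global assembly in \eqref{rank2cT4eq31}–\eqref{rank2cT4eq46}, where the Euler product over good primes is recognized as the ratio $L(s-1,\chi_D\eta)/\big(L(s,\eta)L(2s-2,\eta^2)\big)$ after pulling out the multiplicative function $\tilde H_{D,s,\eta}$, and the product of local epsilon factors is identified with the Gauss sum via \eqref{rank2cT4eq43}. Setting $s=k$ and folding in $I_\infty$ gives \eqref{rank2cT4eq46}; descent then removes $\det(Y)^{k/2}$ and the exponential to yield \eqref{Fourierexpansiontheoremeq2}. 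I expect the main obstacle to be the global recombination of the local factors: one must verify that the partial products $\prod_{n_p=0}$ and $\prod_{n_p>0}$ combine correctly across the split in \eqref{rank2cT4eq32}, that the completed $L$-functions and the Gauss sum emerge with the right normalizations, and that the condition $N^2\mid m$ (forced by $\delta_{v_p(m)\geq 2n_p}$) is exactly what restricts the summation index $T$ in the theorem. The remaining claims—that the support is confined to positive semidefinite $T$ with $n,r\in\Z$, $N^2\mid m$—follow from the Koecher-type vanishing in Corollary~\ref{rank2cT4realpropcor} together with the integrality constraints established around \eqref{Fourierexpansioneq2}.
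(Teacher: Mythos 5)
Your proposal is correct and follows essentially the same route as the paper: the paper's own proof is precisely the assembly you describe, citing \eqref{rewritesummationeq12}, \eqref{rewritesummationeq10}, \eqref{Fourierexpansioneq2}, \eqref{FCcalceq1} and the formulas referenced in table~\eqref{cTitable}, with descent stripping $\det(Y)^{k/2}$ and the rank-$0$ terms other than $c_{T,1}$ killed by the $\Gamma(\frac{s-k}{2})^{-1}$ factors at $s=k$. One small imprecision: the two rank-$1$ cases are disjoint not because "the first requires $N=1$" but because \eqref{cT2rank1eq} requires $m=0$ while \eqref{cT3rank1eq} requires $m>0$; this does not affect the argument.
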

\begin{proof}
This follows from \eqref{rewritesummationeq12}, \eqref{rewritesummationeq10}, \eqref{Fourierexpansioneq2}, \eqref{FCcalceq1}, and the formulas referenced in table~\eqref{cTitable}.
\end{proof}

The local quantity $K(k,T,\chi_p)$ is defined in \eqref{rank2badeq4}, and summarized for odd $p$ and quadratic $\chi_p$ in Sect.~\ref{summaryofK}. The quantity $\tilde H_{D,k,\eta}( e_{\scriptscriptstyle\hat N}, f_{\scriptscriptstyle\hat N})$, which is $1$ for $f=1$, is defined in \eqref{tildeHetaeq1}.  

For $\eta=1$, using the functional equations for $\zeta(s)$ and $L(s,\chi_D)$, the formulas \eqref{Fourierexpansiontheoremeq1a} and \eqref{Fourierexpansiontheoremeq2} reduce to the one given in Corollary 2 to Theorem 6.3 of \cite{EichlerZagier1985}.
\section{The unramified calculation}\label{unramcalcsec}
In this section we fix a prime $p$. Let $|\cdot|$ be the $p$-adic absolute value, and let $v$ be the $p$-adic valuation. (This section is purely local, so we omit most subindices $p$.) Let $\psi$ be a character of~$\Q_p$ of conductor $\Z_p$.

Let $n,r,m\in\Z$ be such that $\Delta=4mn-r^2$ is non-zero. For simplicity we write $L$ for the quantity $\chi_D(p)$ defined in \eqref{Lpdefeq}, and write $d(\Delta)$ for the discriminant valuation $d_p(\Delta)$ defined in~\eqref{DeltaDeq2}. We will repeatedly make use of the identity
\begin{equation}\label{usefulidentity}
    \mat{1}{}{x}{1}=\mat{1}{x^{-1}}{}{1}\mat{-x^{-1}}{}{}{-x}\mat{}{1}{-1}{}\mat{1}{x^{-1}}{}{1}
\end{equation} as well as
\begin{equation}\label{psiintegralZp}
    \int\limits_{\Z_p}\psi(xy)dx=\begin{cases}
        1 & \text{if }v(y)\geq 0,\\
        0 & \text{if }v(y)< 0,
    \end{cases}
    \qquad\quad
    \int\limits_{\Z_p^\times}\psi(xy)dx=\begin{cases}
        1-p^{-1} & \text{if }v(y)\geq 0,\\
        -p^{-1} & \text{if }v(y)= -1,\\
        0 & \text{if }v(y)< -1.
    \end{cases}
\end{equation}
\subsection{Statement of result and initial reduction}
Let $f_s$ be the spherical vector in the local representation $|\cdot|^{s-3/2}1_{\GL(2,\Q_p)}\rtimes|\cdot|^{-s+3/2}$, normalized so that $f_s(1)=1$. In this section we will calculate the integral
\begin{align}\label{rank2goodeq1}
 I&:=\int\limits_{\Q_p}\int\limits_{\Q_p}\int\limits_{\Q_p}f_s(s_2s_1s_2\begin{bsmallmatrix}1&&\mu&\kappa\\&1&\lambda&\mu\\&&1\\&&&1\end{bsmallmatrix})\psi(n\lambda+r\mu+m\kappa)^{-1}\,d\lambda\,d\mu\,d\kappa\nonumber\\
 &=\int\limits_{\Q_p}\int\limits_{\Q_p}\int\limits_{\Q_p}f_s(\begin{bsmallmatrix}1\\&1\\\mu&\kappa&1\\\lambda&\mu&&1\end{bsmallmatrix})\psi(n\lambda+r\mu+m\kappa)\,d\lambda\,d\mu\,d\kappa,
\end{align}
appearing in~\eqref{rank2cT4eq1}. We write $I(n,r,m)$ or $I(T)$ if it is important to stress the dependence on $T=\mat{n}{r/2}{r/2}{m}$. A straightforward argument verifies that
\begin{equation}\label{rank2goodeq2}
 I(T)=I(AT\,^t\!A)\qquad\text{for all }A\in\GL(2,\Z_p).
\end{equation}
Our main result in this section is
\begin{theorem}\label{rank2goodtheorem}
 Let $n,r,m\in\Z$ be such that $\Delta=4mn-r^2\neq0$. Let $e=\min(v(n),v(r),v(m))$. Then
 \begin{align}\label{rank2goodtheoremeq1}
  I&=\frac{(1-p^{-s})(1-p^{2-2s})}{1-Lp^{1-s}}p^{\frac{v(\Delta)-d(\Delta)}2(3-2s)}\nonumber\\
  &\qquad\sum_{i=0}^{e}p^{i(s-1)}\Bigg(\sum_{j=0}^{\frac{v(\Delta)-d(\Delta)}2-i}p^{j(2s-3)}-Lp^{s-2}\sum_{j=0}^{\frac{v(\Delta)-d(\Delta)}2-i-1}p^{j(2s-3)}\Bigg).
 \end{align}
\end{theorem}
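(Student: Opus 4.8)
The plan is to evaluate the triple integral \eqref{rank2goodeq1} by repeatedly applying the Iwasawa decomposition to push the unipotent variables into the spherical vector's argument, reducing the three-dimensional integral to an iterated sum over lattice points that can be recognized as a product of local zeta integrals. The key structural observation is the $\GL(2,\Z_p)$-invariance \eqref{rank2goodeq2}: since any $T$ is $\GL(2,\Z_p)$-equivalent to a diagonal (or nearly diagonal) matrix via Smith normal form, I would first reduce to a normal form for $T$ in which $e=\min(v(n),v(r),v(m))$ and the structure of $\Delta$ is transparent. After extracting the common factor $p^e$, the problem separates into the "primitive'' part (where $e=0$) and a geometric-series contribution from the scaling, which explains the outer sum $\sum_{i=0}^e p^{i(s-1)}$ in \eqref{rank2goodtheoremeq1}.

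\textbf{The main computation.} With $f_s$ the normalized spherical vector, I would evaluate $f_s$ on the lower-triangular unipotent matrix in the second line of \eqref{rank2goodeq1} by writing that matrix in Iwasawa form $p \cdot a \cdot \kappa$ with $p\in P(\Q_p)$, $a$ a diagonal torus element, and $\kappa\in K_p=\GSp(4,\Z_p)$; by the transformation property analogous to \eqref{indrepeq3} together with right $K_p$-invariance and $f_s(1)=1$, the value of $f_s$ depends only on the torus part, yielding a factor of the form $|{\cdot}|^s$ in the valuations of the entries. The identity \eqref{usefulidentity} is the tool for clearing denominators when the unipotent entries have negative valuation, converting an awkward lower-triangular entry into a product involving the Weyl element $\smallmat{}{1}{-1}{}$ and genuinely upper-triangular pieces that get absorbed into $K_p$. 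Carrying this out variable-by-variable — first integrating over $\lambda$ using \eqref{psiintegralZp}, then $\mu$, then $\kappa$ — produces, at each stage, either a geometric series in the valuation or a truncation depending on the conductor of $\psi$ and the valuations $v(n),v(r),v(m)$.

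\textbf{Assembling the answer.} After the three integrations, the surviving sum should organize into the inner double structure of \eqref{rank2goodtheoremeq1}: the prefactor $\tfrac{(1-p^{-s})(1-p^{2-2s})}{1-Lp^{1-s}}$ is the ratio of local $L$-factors (two zeta factors in the numerator from the spherical normalization of the Siegel-parabolic induced representation, and a degree-one factor $(1-Lp^{1-s})^{-1}$ coming from the quadratic field $\Q_p(\sqrt{-\Delta})$ via $L=\chi_D(p)$), while the two nested geometric sums indexed by $j$ reflect the splitting of the lattice-point count according to whether the relevant quadratic form represents a unit or a non-unit, governed precisely by $L$ and the truncation length $\tfrac{v(\Delta)-d(\Delta)}2$. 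The appearance of $\tfrac{v(\Delta)-d(\Delta)}2$ rather than $v(\Delta)$ is exactly the statement \eqref{DeltaDeq1}–\eqref{DeltaDeq2} that $-\Delta=Df^2$ with $v_p(D)=d_p(\Delta)$, so that $f_p=\tfrac{v_p(\Delta)-d_p(\Delta)}2$ is the $p$-adic valuation of the conductor $f$.

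\textbf{The main obstacle} I anticipate is the bookkeeping in the middle ($\mu$) integration, where the cross-term $r\mu$ couples to both the $\lambda$ and $\kappa$ variables and where the Iwasawa decomposition via \eqref{usefulidentity} must be applied in a way that correctly tracks the ramification of $\Q_p(\sqrt{-\Delta})$. The three cases of \eqref{Lpdefeq} (split, inert, ramified) must be handled so that the single closed form \eqref{rank2goodtheoremeq1} emerges uniformly; in particular one must verify that in the ramified case $L=0$ the second inner sum drops out cleanly, and that the dependence on $r$, $n$, $m$ collapses into dependence on $\Delta$ alone, consistent with \eqref{rank2goodeq2}. The case $p=2$, where $d(\Delta)$ can equal $2$ or $3$, will require separate care to confirm the valuation formula $f_p=\tfrac{v_p(\Delta)-d_p(\Delta)}2$ still governs the truncation length.
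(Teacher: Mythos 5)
Your overall skeleton --- reduce $T$ to a normal form using the invariance \eqref{rank2goodeq2}, then unfold the integral with the identity \eqref{usefulidentity} and the elementary integrals \eqref{psiintegralZp} --- is the same as the paper's, but two of your load-bearing steps have genuine gaps. The first is your claim that after extracting $p^e$ the problem ``separates into the primitive part and a geometric-series contribution from the scaling, which explains the outer sum $\sum_{i=0}^e p^{i(s-1)}$.'' This is unjustified, and as a factorization it is false: in \eqref{rank2goodtheoremeq1} the inner sums are truncated at $\frac{v(\Delta)-d(\Delta)}2-i$, a bound depending on the outer index $i$, so $I(T)$ is \emph{not} the product of $\sum_{i=0}^e p^{i(s-1)}$ with the integral of the primitive form $T/p^e$ (only the $i=e$ term is proportional to $I(T/p^e)$). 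Any honest scaling recursion relating $I(pT)$ to $I(T)$ would amount to a Hecke-operator computation that you neither state nor prove; in the paper the outer sum only emerges at the very end, from adding the pieces \eqref{rank2goodeq19d}, \eqref{rank2goodeq38}, \eqref{rank2goodeq43}.

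The second, more serious, gap is that the technical heart of the computation is missing. After the $\lambda$- and $\kappa$-integrations one is not left with bare geometric series, but with $\mu$-integrals of characters evaluated at $(n+r\mu p^{-j}+m\mu^2 p^{-2j})\lambda p^{-i}$, and everything hinges on computing the volumes of the sets $R(i,j)$ and $S(i,j)$ of \eqref{Rij0eq1}--\eqref{FC7eq53}, i.e.\ the set of $\mu\in\Z_p^\times$ for which $v(n+r\mu p^{-j}+m\mu^2p^{-2j})$ is at least $i$, or exactly $i-1$. This is precisely where the trichotomy $L\in\{1,-1,0\}$, the factor $(1-Lp^{1-s})^{-1}$, and the truncation length $\frac{v(\Delta)-d(\Delta)}2$ enter (Lemma \ref{Rij0lemma} and the generating series of Propositions \ref{Rijprop} and \ref{Rijprop2}; note for instance the volume $2p^{v(n)-i}$ in the split case, which is a root-counting statement, not a unit/non-unit dichotomy). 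Your proposal gestures at ``lattice-point counts governed by $L$'' but never formulates or performs this computation, so the claimed final assembly cannot be verified. A minor additional point: Smith normal form is the wrong reduction tool, since the action in \eqref{rank2goodeq2} is congruence $T\mapsto AT\,^t\!A$ rather than equivalence; what is needed is diagonalization of quadratic forms over $\Z_p$ for odd $p$ (O'Meara), together with the Cassels normal forms $2^{e-1}\mat{2}{1}{1}{2}$, $2^{e-1}\mat{}{1}{1}{}$ for $p=2$, exactly as the paper invokes them.
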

\begin{proof}
Note that both sides of the asserted equality are invariant under the transformation $T\mapsto AT\,^t\!A$ with $A\in\GL(2,\Z_p)$. We may therefore assume that $T$ is of a convenient form that can be achieved with this kind of transformation:
\begin{itemize}
 \item If $p$ is odd, there exists an $A$ such that $AT\,^t\!A$ is diagonal; see \S92 of \cite{OMeara1971}. Hence, to prove \eqref{rank2goodtheoremeq1}, we may assume that $r=0$.
 \item Assume that $p=2$. Then, by Lemma 4.1 of \cite{Cassels1978}, $T$ can either be diagonalized, or be brought into one of the forms $T=2^{e-1}\mat{2}{1}{1}{2}$ or $T=2^{e-1}\mat{}{1}{1}{}$ for some $e\geq1$.
\end{itemize}
In the interest of space, we will assume in all following calculations that $p$ is odd and hence that $r=0$. (See \cite{Pierce2025} for $p=2$.) We may then further assume that $v(m)\leq v(n)$, so that $e=v(m)$. The calculation of the integral $I$ is lengthy, so we proceed by splitting $I$ into various pieces.  These pieces will be calculated in the following sections. Summing up the terms in \eqref{rank2goodeq19d}, \eqref{rank2goodeq38}, \eqref{rank2goodeq43}, we get
\begin{align}\label{rank2goodeq50}
 I&=(1-p^{-s})\Bigg[\frac{(1-p^{(2-s)(e+1)})(1-p^{2-2s})}{(1-p^{2-s})(1-p^{3-2s})}\nonumber\\
 &\qquad-(1-p^{(1-s)(e+1)})p^{(1-s)(1-e)}p^{-(3-2s)v(2)}\frac{p^{(3-2s)\lfloor\frac{v(\Delta)+1}2\rfloor}+p^{2-s}p^{(3-2s)\lfloor\frac{v(\Delta)}2\rfloor}}{1-p^{3-2s}}\nonumber\\
   &\qquad+\frac{(1-p^{(1-s)(e+1)})}{1-p^{1-s}}|L|(L+1)p^{(1-s)(1-e)}p^{(3-2s)\frac{v(\Delta)}2}\Bigg].
\end{align}
Using the formula for the geometric series, one can verify that this matches \eqref{rank2goodtheoremeq1}.
\end{proof}

\subsection{The sets \texorpdfstring{$R(i,j)$}{} and \texorpdfstring{$S(i,j)$}{}}
The calculations in the following sections will naturally lead us to the volumes of certain sets of $p$-adic numbers, which we calculate in this section.  We define, for integers $i,j$,
\begin{align}
 \label{Rij0eq1}R(i,j)&=\left\{\mu\in\Z_p^\times\mid v(n+r\mu p^{-j}+m\mu^2p^{-2j})-i\geq0\right\},\\
 \label{Rij0eq2}S(i,j)&=\left\{\mu\in\Z_p^\times\mid v(n+r\mu p^{-j}+m\mu^2p^{-2j})-i=-1\right\}.
\end{align}
Note that
\begin{equation}
 \label{FC7eq53}R(i,j)=\bigsqcup_{k=i+1}^\infty S(k,j),\qquad 
 S(i,j)=R(i-1,j)\setminus R(i,j),
\end{equation}
so that calculating ${\rm vol}(R(i,j))$ is equivalent to calculating ${\rm vol}(S(i,j))$.

\begin{lemma}\label{Rij0lemma}
 Assume that $p$ is odd, $r=0$ and $n,m\neq0$. Then the volume of $R(i,j)$ is given as follows.
\begin{equation}\label{Rij0table}
 \begin{array}{cccc}
  \toprule
   \text{conditions}&&&{\rm vol}(R(i,j))\\
  \toprule
   2j<v(m)-v(n)&&i\leq v(n)&1-p^{-1}\\
   &&i>v(n)&0\\
   2j=v(m)-v(n)&-nm\in\Q_p^{\times2}&i\leq v(n)&1-p^{-1}\\
   &&i>v(n)&2p^{v(n)-i}\\
   &-nm\notin\Q_p^{\times2}&i\leq v(n)&1-p^{-1}\\
   &&i>v(n)&0\\
   2j>v(m)-v(n)&&i+2j\leq v(m)&1-p^{-1}\\
   &&i+2j>v(m)&0\\
  \bottomrule
 \end{array}
\end{equation}
\end{lemma}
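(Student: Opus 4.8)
The plan is to compute $\mathrm{vol}(R(i,j))$ directly, since with $p$ odd and $r=0$ the defining condition on $\mu\in\Z_p^\times$ becomes $v(n+m\mu^2 p^{-2j})\geq i$. First I would separate the analysis according to how the valuations of the two summands $n$ and $m\mu^2 p^{-2j}$ compare. Since $\mu\in\Z_p^\times$ we have $v(m\mu^2 p^{-2j})=v(m)-2j$, so the relative size of $v(n)$ and $v(m)-2j$ is governed exactly by the sign of $2j-(v(m)-v(n))$, which is why the three regimes $2j<v(m)-v(n)$, $2j=v(m)-v(n)$, $2j>v(m)-v(n)$ appear in the table.

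In the two unequal-valuation regimes the valuation of the sum is simply the minimum of the two valuations, with no cancellation possible. When $2j<v(m)-v(n)$ the dominant term is $n$, so $v(n+m\mu^2p^{-2j})=v(n)$ for every $\mu\in\Z_p^\times$; hence $R(i,j)$ is all of $\Z_p^\times$ (volume $1-p^{-1}$) when $i\leq v(n)$ and empty otherwise. Symmetrically, when $2j>v(m)-v(n)$ the dominant term is $m\mu^2p^{-2j}$, giving valuation $v(m)-2j$ independent of $\mu$, so the condition $i+2j\leq v(m)$ governs whether $R(i,j)=\Z_p^\times$ or $\varnothing$. These two cases are immediate.

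The genuine content is the equal-valuation case $2j=v(m)-v(n)$, where $v(n)=v(m)-2j$ and cancellation can occur. Writing $n=p^{v(n)}n_0$ and $m p^{-2j}=p^{v(n)}m_0$ with $n_0,m_0\in\Z_p^\times$, the condition becomes $v(n_0+m_0\mu^2)\geq i-v(n)$. For $i\leq v(n)$ this holds for all $\mu\in\Z_p^\times$, giving volume $1-p^{-1}$. For $i>v(n)$ I would reduce modulo $p$: a solution forces $\mu^2\equiv -n_0/m_0\pmod p$, so everything hinges on whether $-n_0 m_0$ (equivalently $-nm$, since the square factors do not affect being a square in $\Q_p^\times$ for odd $p$) is a square in $\Z_p^\times$. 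If $-nm\notin\Q_p^{\times2}$ the congruence has no solution and $R(i,j)=\varnothing$. If $-nm\in\Q_p^{\times2}$, I would count solutions via Hensel's lemma: the quadratic $n_0+m_0\mu^2$ has two simple roots mod $p$ (the derivative $2m_0\mu$ is a unit for odd $p$ at a nonzero root), and each lifts uniquely, so the measure of $\mu$ with $v(n_0+m_0\mu^2)\geq i-v(n)$ is $2\,p^{-(i-v(n))}=2p^{v(n)-i}$, matching the table.

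The main obstacle is precisely this last subcase: getting the volume $2p^{v(n)-i}$ right requires care that the two roots are simple and that Hensel lifting produces exactly measure $p^{-(i-v(n))}$ around each, which is where oddness of $p$ (invertibility of $2$, hence nondegeneracy of the quadratic) is essential. I would organize the argument around \eqref{Rij0table} by verifying each row, treating the equal-valuation square case as the core computation and deducing the others as the degenerate no-cancellation limits.
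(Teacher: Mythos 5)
Your proposal is correct and takes essentially the same approach as the paper: a direct case analysis comparing $v(n)$ with $v(m)-2j$, with the only substantive case being $2j=v(m)-v(n)$, $i>v(n)$, $-nm\in\Q_p^{\times2}$. There the paper reduces to the condition $v(\mu^2-1)\geq i+2j-v(m)$ by a unit substitution and computes the volume of the two balls around $\pm1$ explicitly; this is the same two-simple-roots computation you phrase via Hensel's lemma (both hinging on $p$ odd, so that the roots $\pm u$ stay in distinct residue classes), and both yield $2p^{v(n)-i}$.
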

\begin{proof}
We have
\begin{equation}\label{Rij0lemmaeq4}
 R(i,j)=\left\{\mu\in\Z_p^\times\mid v(n+m\mu^2p^{-2j})\geq i\right\}=\left\{\mu\in\Z_p^\times\;\Big\vert\; v\left(\frac{np^{2j}}m+\mu^2\right)\geq i+2j-v(m)\right\}.
\end{equation}
The cases where $2j\neq v(m)-v(n)$ are easy to see. We assume for the rest of the proof that $2j=v(m)-v(n)$.
In this case $\frac{np^{2j}}m$ is a unit.  If $-\frac{np^{2j}}m\notin\Z_p^{\times2}$ (equivalently, $-nm\notin\Q_p^{\times2}$), then $\frac{np^{2j}}m+\mu^2$ is always a unit, so that
\begin{equation}\label{Rij0lemmaeq6}
 {\rm vol}(R(i,j))={\rm vol}(\left\{\mu\in\Z_p^\times\mid 0\geq i+2j-v(m)\right\})
 =\begin{cases}
   1-p^{-1}&\text{if }v(m)\geq i+2j,\\
   0&\text{if }v(m)<i+2j.
  \end{cases}
\end{equation}
Assume that $-\frac{np^{2j}}m=u^2$ with $u\in\Z_p^\times$ (equivalently, $-nm\in\Q_p^{\times2}$). Then
\begin{align}\label{Rij0lemmaeq7}
  {\rm vol}(R(i,j))&={\rm vol}\left(\left\{\mu\in\Z_p^\times\mid v\left(\mu^2-u^2\right)\geq i+2j-v(m)\right\}\right)\nonumber\\
  &={\rm vol}\left(\left\{\mu\in\Z_p^\times\mid v\left(\mu^2-1\right)\geq i+2j-v(m)\right\}\right)\nonumber\\
  &={\rm vol}\left(\left\{\mu\in\Z_p^\times\setminus((1+p\Z_p)\cup(-1+p\Z_p))\mid 0\geq i+2j-v(m)\right\}\right)\nonumber\\
  &\qquad+2\,{\rm vol}\left(\left\{\mu\in1+p\Z_p\mid v\left(\mu^2-1\right)\geq i+2j-v(m)\right\}\right)\nonumber\\
  &=2\,{\rm vol}\left(\left\{\mu\in p\Z_p\mid v\left(\mu^2+2\mu\right)\geq i+2j-v(m)\right\}\right)+\begin{cases}
           1-3p^{-1}&\text{if }v(m)\geq i+2j,\\
           0&\text{if }v(m)<i+2j,\\
          \end{cases}\nonumber\\
  &=2\,{\rm vol}\left(\left\{\mu\in p\Z_p\mid v(\mu)\geq i+2j-v(m)\right\}\right)+\begin{cases}
           1-3p^{-1}&\text{if }v(m)\geq i+2j,\\
           0&\text{if }v(m)<i+2j,\\
          \end{cases}\nonumber\\
  &=2\begin{cases}
      p^{-1}&\text{if }i+2j-v(m)<1\\
      p^{v(m)-i-2j}&\text{if }i+2j-v(m)\geq1\\
     \end{cases}+\begin{cases}
           1-3p^{-1}&\text{if }v(m)\geq i+2j,\\
           0&\text{if }v(m)<i+2j,\\
          \end{cases}\nonumber\\
  &=\begin{cases}
           1-p^{-1}&\text{if }v(m)\geq i+2j,\\
           2p^{v(m)-i-2j}&\text{if }v(m)<i+2j.
          \end{cases}
\end{align} This concludes the proof.
\end{proof}  

The values in Table \eqref{Rij0table} can be encoded in the following power series.  The proofs are straightforward, so we omit the details.

\begin{proposition}[First generating series]\label{Rijprop}
Assume that $r=0$ and $n,m\neq0$. Then we have the formal power series
   \begin{equation}\label{Rijpropeq1}
    \sum_{j=0}^\infty {\rm vol}(R(i,-j))Y^j=A_1(i)+A_2(i)
   \end{equation}
   with
   \begin{align}
    \label{RijpropeqA1}A_1(i)&=\delta_{i\leq v(n)}(1-p^{-1})\frac{Y^{\max(0,\lfloor\frac{i-v(m)+1}2\rfloor)}}{1-Y}\\
    \label{RijpropeqA2}A_2(i)&=\delta_{v(m)\leq v(n)}\delta_{i>v(4n)}|L|(L+1)p^{v(2n)-i}Y^{\frac{v(n)-v(m)}2}
   \end{align}
   and
   \begin{equation}\label{Rijpropeq2}
    \sum_{i=1}^\infty\sum_{j=0}^\infty {\rm vol}(R(i,-j))X^iY^j=B_1+B_2
   \end{equation}
   with
   \begin{align}
    \label{RijpropeqB1}B_1&=(1-p^{-1})X\frac{1-X^{\min(v(n),v(m))}}{(1-X)(1-Y)}\\
     &+\delta_{v(m)\leq v(n)}(1-p^{-1})\frac{YX^{v(m)+1}\left(1-(YX^2)^{\lfloor\frac{v(n)-v(m)+1}2\rfloor}+X\left(1-(YX^2)^{\lfloor\frac{v(n)-v(m)}2\rfloor}\right)\right)}{(1-Y)(1-YX^2)},\nonumber\\
      \label{RijpropeqB2}B_2&=\delta_{v(m)\leq v(n)}|L|(L+1)p^{-v(2)-1}\frac{X^{v(4n)+1}}{1-p^{-1}X}Y^{\frac{v(n)-v(m)}2}.
   \end{align}
\end{proposition}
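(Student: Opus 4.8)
The plan is to read off $\mathrm{vol}(R(i,-j))$ from Table~\eqref{Rij0table} (with $j$ replaced by $-j$) and then sum the resulting elementary expressions in two ways. Since $r=0$ we are in the odd-$p$ situation of Lemma~\ref{Rij0lemma}; write $a=v(n)$ and $b=v(m)$. By \eqref{Rij0lemmaeq4} we have $R(i,-j)=\{\mu\in\Z_p^\times\mid v(n+m\mu^2p^{2j})\geq i\}$, and Table~\eqref{Rij0table} splits into three regimes according to the sign of $2j-(a-b)$: a ``large $j$'' regime $2j>a-b$, a ``boundary'' regime $2j=a-b$ (occurring only when $a\geq b$ and $a-b$ is even, at the single index $j=(a-b)/2$), and a ``small $j$'' regime $2j<a-b$ (occurring only when $a>b$). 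I would also record the identity $|L|(L+1)=2$ when $L=1$ and $|L|(L+1)=0$ otherwise, together with the fact, from \eqref{Lpdefeq}, that $L=\chi_D(p)=1$ precisely when $-nm\in\Q_p^{\times2}$; this treats the split and non-split boundary cases uniformly and explains the prefactor in \eqref{RijpropeqA2}.

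For the first series I would fix $i$ and sum over $j\geq0$. The key observation is that for $i\leq a$ the value $\mathrm{vol}(R(i,-j))$ equals $1-p^{-1}$ exactly when $2j\geq i-b$ and equals $0$ otherwise: this merges the three regimes, since the large-$j$ and boundary values are $1-p^{-1}$ while the small-$j$ value is $1-p^{-1}$ iff $i-2j\leq b$. Summing the resulting geometric series from $j=\max(0,\lfloor\frac{i-b+1}{2}\rfloor)$ yields $A_1(i)$ as in \eqref{RijpropeqA1}. When $i>a$ I would verify that every regime contributes $0$ except the boundary index $j=(a-b)/2$, which contributes $2p^{a-i}$ precisely when $-nm$ is a square; this single term is exactly $A_2(i)$ in \eqref{RijpropeqA2} (using $v(4n)=v(2n)=a$ and $v(2)=0$ for odd $p$). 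Combining the two cases gives \eqref{Rijpropeq1}.

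For the second series I would substitute \eqref{Rijpropeq1} and compute $\sum_{i\geq1}(A_1(i)+A_2(i))X^i$ term by term. The $A_2$-part is a geometric series in $p^{-1}X$ beginning at $i=a+1$, which immediately produces $B_2$ in \eqref{RijpropeqB2}. For the $A_1$-part I would split the range $1\leq i\leq a$ at $i=b$: the terms with $i\leq\min(a,b)$ carry $Y^0$ and sum to the first line of \eqref{RijpropeqB1}, while the terms with $b<i\leq a$ (present only when $a>b$) carry $Y^{\lceil(i-b)/2\rceil}$. Writing $i=b+t$ and separating $t$ even from $t$ odd converts the latter into two finite geometric series in $YX^2$ (of lengths $\lfloor\frac{a-b+1}{2}\rfloor$ and $\lfloor\frac{a-b}{2}\rfloor$), whose sums give precisely the second line of \eqref{RijpropeqB1}. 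This establishes \eqref{Rijpropeq2}.

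The only genuine obstacle is bookkeeping: one must keep the parity of $i-v(m)$ straight so that the even/odd split reproduces exactly the two floor functions in \eqref{RijpropeqB1}, and confirm that the boundary index $j=(v(n)-v(m))/2$ is an integer so that $A_2$ is well-defined. Everything else reduces to summing geometric series, which is why the identities can legitimately be presented as routine.
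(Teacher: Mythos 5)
Your proof is correct and is precisely the argument the paper intends: the paper omits the proof (stating only that the values in Table~\eqref{Rij0table} ``can be encoded in the following power series'' and that the details are straightforward), and your merging of the three regimes into the single criterion $2j\geq i-v(m)$ for $i\leq v(n)$, followed by geometric-series summation, supplies exactly those details. All the delicate points check out: the identification $|L|(L+1)=2\delta_{L=1}$ with $L=1\iff -nm\in\Q_p^{\times2}$ (so the boundary contribution $2p^{v(n)-i}$ matches $A_2$), the identity $\lceil\frac{i-v(m)}2\rceil=\lfloor\frac{i-v(m)+1}2\rfloor$ behind $A_1$, and the even/odd split producing finite geometric series in $YX^2$ of lengths $\lfloor\frac{v(n)-v(m)+1}2\rfloor$ and $\lfloor\frac{v(n)-v(m)}2\rfloor$, which are exactly the two exponents in \eqref{RijpropeqB1}.
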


\begin{proposition}[Second generating series]\label{Rijprop2}
 Assume that $r=0$ and $n,m\neq0$. Then, with $e:=\min(v(n),v(m))$, we have the formal power series
 \begin{equation}\label{Rijprop2eq1}
  \sum_{j=1}^{\lfloor\frac{v(m)}2\rfloor}{\rm vol}(R(i,j))Y^j=C_1(i)+C_2(i)\qquad (i\geq0)
 \end{equation}
 with
 \begin{align}
  \label{Rijprop2eqC1}C_1(i)&=\delta_{i\leq e}(1-p^{-1})\frac{Y-Y^{\lfloor\frac{v(m)-i+2}2\rfloor}}{1-Y},\\
  \label{Rijprop2eqC2}C_2(i)&=\delta_{v(m)>v(n)}\delta_{i>v(4n)}|L|(L+1)p^{v(2n)-i}Y^{\frac{v(m)-v(n)}2}
   \end{align}
  and
  \begin{equation}\label{Rijprop2eq2}
   \sum_{i=0}^\infty\sum_{j=1}^{\lfloor\frac{v(m)}2\rfloor}{\rm vol}(R(i,j))X^iY^j=D_1+D_2
  \end{equation}
  with
  \begin{align}
   \label{Rijprop2eqD1}D_1&=(1-p^{-1})Y\frac{1-X^{e+1}}{(1-X)(1-Y)}\\
      &\qquad-(1-p^{-1})\frac{Y^{\lfloor\frac{v(m)+1}2\rfloor}X(1-(X^2Y^{-1})^{\lfloor\frac{e+1}2\rfloor})+Y^{\lfloor\frac{v(m)+2}2\rfloor}(1-(X^2Y^{-1})^{\lfloor\frac{e+2}2\rfloor})}{(1-Y)(1-X^2Y^{-1})}\nonumber\\
   \label{Rijprop2eqD2}D_2&=\delta_{v(m)>v(n)}|L|(L+1)p^{-v(2)-1}\frac{X^{v(4n)+1}}{1-p^{-1}X}Y^{\frac{v(m)-v(n)}2}.
  \end{align}
\end{proposition}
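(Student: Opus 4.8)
The plan is to read the volumes off the table~\eqref{Rij0table} of Lemma~\ref{Rij0lemma} and assemble them into the two asserted generating series, exactly in the spirit of Proposition~\ref{Rijprop}; since Lemma~\ref{Rij0lemma} assumes $p$ odd, I work under that hypothesis, where $v(2)=0$ and hence $v(2n)=v(4n)=v(n)$. The key structural observation is that the summation index satisfies $j\geq1$, so $2j\geq2$, and the relevant comparison is that of $2j$ with $v(m)-v(n)$. This splits into two regimes: if $v(m)\leq v(n)$ (so $e=v(m)$) then $2j>v(m)-v(n)$ for every $j\geq1$, while if $v(m)>v(n)$ (so $e=v(n)$) the boundary value $2j=v(m)-v(n)$ can occur, namely at $j=\tfrac{v(m)-v(n)}2$ when $v(m)-v(n)$ is even. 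I would organize both regimes by distinguishing $i\leq e$ from $i>e$.

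First I would prove the single-variable identity~\eqref{Rijprop2eq1}. For $i\leq e$ the table gives ${\rm vol}(R(i,j))=1-p^{-1}$ precisely when $i+2j\leq v(m)$ and $0$ otherwise, independently of $L$: indeed, when $2j\leq v(m)-v(n)$ the inequality $i+2j\leq v(n)+(v(m)-v(n))=v(m)$ holds automatically, and when $2j>v(m)-v(n)$ the table already demands $i+2j\leq v(m)$. Summing $Y^j$ over $1\leq j\leq\lfloor\tfrac{v(m)-i}2\rfloor$ gives the truncated geometric series $(1-p^{-1})\frac{Y-Y^{\lfloor(v(m)-i+2)/2\rfloor}}{1-Y}=C_1(i)$, matching~\eqref{Rijprop2eqC1}. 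For $i>e$ all generic terms vanish (if $2j<v(m)-v(n)$ the volume is $0$ since $i>v(n)$, and if $2j>v(m)-v(n)$ then $i+2j>v(m)$ forces $0$), leaving only the boundary term at $2j=v(m)-v(n)$; this requires $v(m)>v(n)$ and contributes ${\rm vol}=2p^{v(n)-i}$ in the split case $-nm\in\Q_p^{\times2}$ and $0$ otherwise. Since $|L|(L+1)$ equals $2$ exactly when $L=1$ and vanishes for $L\in\{0,-1\}$, this is precisely $C_2(i)$ in~\eqref{Rijprop2eqC2}. The truncation at $\lfloor v(m)/2\rfloor$ discards nothing, since for $i\geq0$ and $2j>v(m)$ the volume is already $0$.

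For the double series~\eqref{Rijprop2eq2} I would sum $C_1(i)+C_2(i)$ against $X^i$ over $i\geq0$. The constant-in-$i$ piece $\sum_{i=0}^{e}(1-p^{-1})\frac{Y}{1-Y}X^i$ yields the leading term $(1-p^{-1})Y\frac{1-X^{e+1}}{(1-X)(1-Y)}$ of $D_1$. For the remaining piece $-\frac{1-p^{-1}}{1-Y}\sum_{i=0}^{e}Y^{\lfloor(v(m)-i+2)/2\rfloor}X^i$ I would exploit that replacing $i$ by $i+2$ lowers the exponent of $Y$ by exactly one, so splitting the sum by the parity of $i$ turns each subsum into a geometric series in $X^2Y^{-1}$: the even indices $i=2t$ ($t=0,\dots,\lfloor\tfrac e2\rfloor$, i.e. $\lfloor\tfrac{e+2}2\rfloor$ terms) carry $Y^{\lfloor(v(m)+2)/2\rfloor}$, and the odd indices $i=2t+1$ ($\lfloor\tfrac{e+1}2\rfloor$ terms) carry $XY^{\lfloor(v(m)+1)/2\rfloor}$, reproducing the bracketed expression in~\eqref{Rijprop2eqD1}. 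Finally $\sum_{i>v(4n)}|L|(L+1)p^{v(2n)-i}X^i$, carrying the factor $Y^{(v(m)-v(n))/2}$, is a single geometric series in $p^{-1}X$ starting at $i=v(4n)+1$, giving $D_2$ after the simplification $p^{v(2n)-v(4n)-1}=p^{-v(2)-1}$.

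The only genuinely fiddly point is the parity split in the double series: keeping the two floor expressions $\lfloor\tfrac{e+1}2\rfloor$, $\lfloor\tfrac{e+2}2\rfloor$ and the matching $Y$-exponents $\lfloor\tfrac{v(m)+1}2\rfloor$, $\lfloor\tfrac{v(m)+2}2\rfloor$ aligned correctly. Everything else reduces to summing geometric series and recognizing the boundary contribution through the factor $|L|(L+1)$, which is why the computation is routine and the details may reasonably be suppressed.
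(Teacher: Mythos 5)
Your proposal is correct and follows exactly the route the paper intends but omits (``The proofs are straightforward, so we omit the details''): read the volumes off Table~\eqref{Rij0table}, observe that for $j\geq1$ only the comparison of $2j$ with $v(m)-v(n)$ and of $i$ with $e$ matters, sum the resulting truncated geometric series, and handle the boundary case $2j=v(m)-v(n)$ via the factor $|L|(L+1)$ (with the parity split in $i$ producing the $X^2Y^{-1}$-series in $D_1$). Your case checks, including the observation that the truncation at $\lfloor v(m)/2\rfloor$ loses nothing and the reduction $v(2n)=v(4n)=v(n)$ for odd $p$, are all accurate, so this is a faithful filling-in of the suppressed details.
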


\subsection{Calculation of \texorpdfstring{$I_1$}{}}
Recall that our goal is to calculate \eqref{rank2goodeq1}.
 As mentioned above, we assume that $p$ is odd, $r=0$, and $n,m\neq0$. Let $I_1$ be the part of the integral \eqref{rank2goodeq1} where $\lambda\in\Z_p$, and let $I_2$ be the part where $\lambda\notin\Z_p$. Clearly,
\begin{align}\label{rank2goodeq11}
 I_1&=\int\limits_{\Q_p}\int\limits_{\Q_p}f_s(\begin{bsmallmatrix}1\\&1\\\mu&\kappa&1\\&\mu&&1\end{bsmallmatrix})\psi(m\kappa)\,d\mu\,d\kappa.
\end{align}
Let $I_{11}$ be the part where $\mu\in\Z_p$, and let $I_{12}$ be the part where $\mu\notin\Z_p$. A straightforward calculation using \eqref{usefulidentity} gives
\begin{align}\label{rank2goodeq12}
 I_{11}=\frac{(1-p^{-s})(1-p^{(1-s)(v(m)+1)})}{1-p^{1-s}}.
\end{align}and
\begin{align}\label{rank2goodeq13}
    I_{12}&=\int\limits_{\Q_p}\int\limits_{\Q_p\setminus\Z_p}|\mu|^{2-2s}f_s(\begin{bsmallmatrix}1\\&1\\&\kappa&1\\&&&1\end{bsmallmatrix})\psi(m\kappa\mu^2)\,d\mu\,d\kappa.
\end{align}
Let $I_{121}$ be the part of \eqref{rank2goodeq13} where $\kappa\in\Z_p$, and let $I_{122}$ be the part where $\kappa\notin\Z_p$. We have
\begin{align}\label{rank2goodeq14}
 I_{121}&=\sum_{j=1}^\infty\:\int\limits_{\Z_p}\int\limits_{p^{-j}\Z_p^\times}|\mu|^{3-2s}\psi(m\kappa\mu^2)\,d^\times\mu\,d\kappa=\sum_{j=1}^\infty\:\int\limits_{\Z_p}\int\limits_{\Z_p^\times}p^{j(3-2s)}\psi(m\kappa\mu^2p^{-2j})\,d^\times\mu\,d\kappa\nonumber\\
 &=\sum_{j=1}^{\lfloor\frac{v(m)}2\rfloor}\int\limits_{\Z_p^\times}p^{j(3-2s)}\,d^\times\mu=(1-p^{-1})p^{3-2s}\frac{1-p^{\lfloor\frac{v(m)}2\rfloor(3-2s)}}{1-p^{3-2s}}.
\end{align}
Next,
\begin{align}\label{rank2goodeq18}
 I_{122}&=\iint\limits_{(\Q_p\setminus\Z_p)^2}|\mu|^{2-2s}f_s(\begin{bsmallmatrix}1\\&1&\kappa^{-1}\\&&1\\&&&1\end{bsmallmatrix}\begin{bsmallmatrix}1\\&-\kappa^{-1}\\&&-\kappa\\&&&1\end{bsmallmatrix})\psi(m\kappa\mu^2)\,d\mu\,d\kappa\nonumber\\
 &=\iint\limits_{(\Q_p\setminus\Z_p)^2}|\mu|^{2-2s}|\kappa|^{-s}\psi(m\kappa\mu^2)\,d\mu\,d\kappa=\sum_{i=1}^\infty\sum_{j=1}^\infty\:\iint\limits_{(\Z_p^\times)^2}p^{j(3-2s)}p^{i(1-s)}\psi(m\kappa\mu^2p^{-i-2j})\,d^\times\mu\,d^\times\kappa\nonumber\\
    &=(1-p^{-1})^2\frac{p^{4-3s}}{1-p^{1-s}}\,\frac{1-p^{(3-2s)\lfloor\frac{v(m)-1}2\rfloor}}{1-p^{3-2s}}+(1-p^{-1})\,\frac{p^{(v(m)+1)(1-s)}}{1-p^{1-s}}\,(1-p^{\lfloor\frac{v(m)-1}2\rfloor})\nonumber\\
   &\qquad+p^{v(m)(1-s)-s}(1-p^{\lfloor\frac{v(m)}2\rfloor}).
\end{align}
Adding up the terms gives
\begin{align}\label{rank2goodeq19d}
 I_1&=\frac{1-p^{-s}}{1-p^{1-s}}+(1-p^{-1})p^{3-2s}\frac{1-p^{\lfloor\frac{e}2\rfloor(3-2s)}}{1-p^{3-2s}}+(1-p^{-1})^2\frac{p^{4-3s}}{1-p^{1-s}}\,\frac{1-p^{(3-2s)\lfloor\frac{e-1}2\rfloor}}{1-p^{3-2s}}\nonumber\\
   &\qquad-(1-p^{-1})\,\frac{p^{(e+1)(1-s)+\lfloor\frac{e-1}2\rfloor}}{1-p^{1-s}}-p^{e(1-s)-s+\lfloor\frac{e}2\rfloor}.
\end{align}
\subsection{Calculation of \texorpdfstring{$I_{21}$}{}}
By applying \eqref{usefulidentity} to $\lambda$ and commuting matrices appropriately, we get
\begin{align}\label{rank2goodeq20}
 I_2&=\int\limits_{\Q_p}\int\limits_{\Q_p}\int\limits_{\Q_p\setminus\Z_p}|\lambda|^{1-s}f_s(\begin{bsmallmatrix}1\\&1\\\mu&\kappa&1\\&\mu&&1\end{bsmallmatrix})\psi((n+m\mu^2)\lambda+m\kappa)\,d\lambda\,d\mu\,d\kappa.
\end{align}
Let $I_{21}$ be the part where $\mu\in\Z_p$, and let $I_{22}$ be the part where $\mu\notin\Z_p$.
We have
\begin{align}\label{rank2goodeq30}
 I_{21}&=\int\limits_{\Q_p}\int\limits_{\Z_p}\int\limits_{\Q_p\setminus\Z_p}|\lambda|^{1-s}f_s(\begin{bsmallmatrix}1\\&1\\&\kappa&1\\&&&1\end{bsmallmatrix})\psi((n+m\mu^2)\lambda+m\kappa)\,d\lambda\,d\mu\,d\kappa.
\end{align}
Let $I_{211}$ be the part of this integral where $\kappa\in\Z_p$, and let $I_{212}$ be the part where $\kappa\notin\Z_p$. Clearly,
\begin{align}\label{rank2goodeq31}
 I_{211}&=\int\limits_{\Z_p}\int\limits_{\Q_p\setminus\Z_p}|\lambda|^{1-s}\psi((n+m\mu^2)\lambda)\,d\lambda\,d\mu.
\end{align}
Using \eqref{usefulidentity} on $\kappa$ and \eqref{psiintegralZp},
\begin{align}\label{rank2goodeq32}
 I_{212}&=\frac{(1-p^{-1})p^{1-s}-(1-p^{-s})p^{(v(m)+1)(1-s)}}{1-p^{1-s}}I_{211}.
\end{align}
Furthermore,
\begin{align}\label{rank2goodeq34}
 I_{211}&=\int\limits_{\Z_p}\int\limits_{\Q_p\setminus\Z_p}|\lambda|^{1-s}\psi((n+m\mu^2)\lambda)\,d\lambda\,d\mu\nonumber\\
&=\sum_{j=0}^\infty\sum_{i=1}^\infty\:\int\limits_{\Z_p^\times}\int\limits_{\Z_p^\times}p^{i(2-s)}p^{-j}\psi((n+m\mu^2p^{2j})p^{-i}\lambda)\,d^\times\lambda\,d^\times\mu\nonumber\\
 &\overset{\eqref{psiintegralZp}}{=}(1-p^{-1})\sum_{j=0}^\infty\sum_{i=1}^\infty\:\int\limits_{R(i,-j)}p^{i(2-s)}p^{-j}\,d^\times\mu-p^{-1}\sum_{j=0}^\infty\sum_{i=1}^\infty\:\int\limits_{S(i,-j)}p^{i(2-s)}p^{-j}\,d^\times\mu\nonumber\\
 &\stackrel{\eqref{FC7eq53}}{=}(1-p^{1-s})\sum_{j=0}^\infty\sum_{i=1}^\infty p^{i(2-s)}p^{-j}\,{\rm vol}(R(i,-j))-p^{1-s}\sum_{j=0}^\infty p^{-j}\,{\rm vol}(R(0,-j)).
\end{align}
By Proposition~\ref{Rijprop}, after a calculation and assuming $v(m)\leq v(n)$, we get
\begin{align}\label{rank2goodeq37}
 I_{211}&=-p^{1-s}\frac{1-p+(1-p^{1-s})p^{(2-s)e+1}}{1-p^{2-s}}\nonumber\\
   &\quad+(1-p^{1-s})\frac{p^{(2-s)e+1-s}\left(1-p^{(3-2s)(\lfloor\frac{v(\Delta)+1}2\rfloor-e-v(2))}+p^{2-s}\left(1-p^{(3-2s)(\lfloor\frac{v(\Delta)}2\rfloor-e-v(2))}\right)\right)}{1-p^{3-2s}}\nonumber\\
   &\quad+|L|(L+1)p^{(1-s)(1-e)}p^{(3-2s)\frac{v(\Delta)}2}.
\end{align}
To summarize, if $v(m)\leq v(n)$ then
\begin{align}\label{rank2goodeq38}
 I_{21}&=\frac{(1-p^{-s})(1-p^{(v(m)+1)(1-s)})}{1-p^{1-s}}I_{211}
\end{align}
with $I_{211}$ given by \eqref{rank2goodeq37}.
\subsection{Calculation of \texorpdfstring{$I_{22}$}{}}
By applying \eqref{usefulidentity} to $\mu$ and commuting matrices appropriately,
\begin{align}\label{rank2goodeq21}
I_{22}&=\int\limits_{\Q_p}\int\limits_{\Q_p\setminus\Z_p}\int\limits_{\Q_p\setminus\Z_p}|\lambda|^{1-s}|\mu|^{2-2s}f_s(\begin{bsmallmatrix}1\\&1\\&\kappa&1\\&&&1\end{bsmallmatrix})\psi((n+m\mu^2)\lambda+m\kappa\mu^2)\,d\lambda\,d\mu\,d\kappa.
\end{align}
Let $I_{221}$ be the part of this integral where $\kappa\in\Z_p$, and let $I_{222}$ be the part where $\kappa\notin\Z_p$. Then
\begin{align}\label{rank2goodeq40}
 I_{221}&=\sum_{j=1}^{\lfloor\frac{v(m)}2\rfloor}\:\int\limits_{\Z_p^\times}\int\limits_{\Q_p\setminus\Z_p}|\lambda|^{1-s}p^{j(3-2s)}\psi((n+m\mu^2p^{-2j})\lambda)\,d\lambda\,d^\times\mu\nonumber\\
 &=\sum_{i=1}^\infty\sum_{j=1}^{\lfloor\frac{v(m)}2\rfloor}\:\int\limits_{\Z_p^\times}\int\limits_{\Z_p^\times}p^{i(2-s)}p^{j(3-2s)}\psi((n+m\mu^2p^{-2j})\lambda p^{-i})\,d^\times\lambda\,d^\times\mu\nonumber\\
 &\overset{\eqref{psiintegralZp}}{=}(1-p^{-1})\sum_{i=1}^\infty\sum_{j=1}^{\lfloor\frac{v(m)}2\rfloor}p^{i(2-s)}p^{j(3-2s)}{\rm vol}(R(i,j))-p^{-1}\sum_{i=1}^\infty\sum_{j=1}^{\lfloor\frac{v(m)}2\rfloor}p^{i(2-s)}p^{j(3-2s)}{\rm vol}(S(i,j))\nonumber\\
 &\stackrel{\eqref{FC7eq53}}{=}(1-p^{1-s})\sum_{i=1}^\infty\sum_{j=1}^{\lfloor\frac{v(m)}2\rfloor}p^{i(2-s)}p^{j(3-2s)}{\rm vol}(R(i,j))-p^{1-s}\sum_{j=1}^{\lfloor\frac{v(m)}2\rfloor}p^{j(3-2s)}{\rm vol}(R(0,j)).
\end{align}
With similar calculations we get
\begin{align}\label{rank2goodeq41}
 I_{222}&=(1-p^{-1})p^{1-s}\sum_{i=1}^\infty\sum_{j=1}^{\lfloor\frac{v(m)-1}2\rfloor}p^{i(2-s)}p^{j(3-2s)}{\rm vol}(R(i,j))\nonumber\\
   &\quad-(1-p^{-1})p^{(1-s)(v(m)+1)}\sum_{i=1}^\infty\sum_{j=1}^{\lfloor\frac{v(m)-1}2\rfloor}p^{i(2-s)}p^{j}{\rm vol}(R(i,j))\nonumber\\
   &\quad-(1-p^{-1})\frac{p^{2-2s}}{1-p^{1-s}}\sum_{j=1}^{\lfloor\frac{v(m)-1}2\rfloor}p^{j(3-2s)}{\rm vol}(R(0,j))\nonumber\\
   &\quad+(1-p^{-1})\frac{p^{(1-s)(v(m)+2)}}{1-p^{1-s}}\sum_{j=1}^{\lfloor\frac{v(m)-1}2\rfloor}p^{j}{\rm vol}(R(0,j))\nonumber\\
   &\quad-p^{(v(m)+1)(1-s)-1}(1-p^{1-s})\sum_{i=1}^\infty\sum_{j=1}^{\lfloor\frac{v(m)}2\rfloor}p^{i(2-s)}p^{j}\:{\rm vol}(R(i,j))\nonumber\\
   &\quad+p^{(v(m)+1)(1-s)-s}\sum_{j=1}^{\lfloor\frac{v(m)}2\rfloor}p^{j}\,{\rm vol}(R(0,j)).
\end{align}
Adding $I_{221}$ from \eqref{rank2goodeq40}, we get, after a calculation,
\begin{align}\label{rank2goodeq42}
 I_{22}&=\frac{1-p^{-s}}{1-p^{1-s}}\Bigg[(1-p^{1-s})\sum_{i=0}^\infty\sum_{j=1}^{\lfloor\frac{v(m)}2\rfloor}p^{i(2-s)}p^{j(3-2s)}{\rm vol}(R(i,j))\nonumber\\
   &\quad-(1-p^{1-s})p^{(1-s)(v(m)+1)}\sum_{i=0}^\infty\sum_{j=1}^{\lfloor\frac{v(m)}2\rfloor}p^{i(2-s)}p^{j}{\rm vol}(R(i,j))\nonumber\\
   &\quad-\sum_{j=1}^{\lfloor\frac{v(m)}2\rfloor}p^{j(3-2s)}{\rm vol}(R(0,j))+p^{(1-s)(v(m)+1)}\sum_{j=1}^{\lfloor\frac{v(m)}2\rfloor}p^{j}{\rm vol}(R(0,j))\Bigg].
\end{align}
Using \eqref{Rijprop2eq1} and \eqref{Rijprop2eq2}, and assuming $v(m)\leq v(n)$, we get
\begin{align}\label{rank2goodeq43}
 &I_{22}=\frac{1-p^{-s}}{1-p^{1-s}}p^{2-s}\Bigg[(1-p^{1-s})(1-p^{-1})\Bigg(p^{1-s}\frac{1-p^{(2-s)(e+1)}}{(1-p^{2-s})(1-p^{3-2s})}-p^{(1-s)e}\frac{1-p^{(2-s)(e+1)}}{(1-p^{2-s})(1-p)}\nonumber\\
      &\qquad+(1-p^{(1-s)(e+1)})\frac{p^{(4-2s)\lfloor\frac{e+1}2\rfloor}+p^{(4-2s)\lfloor\frac{e}2\rfloor+2-s}}{(1-p^{3-2s})(1-p)}\Bigg)\nonumber\\
   &\qquad-(1-p^{-1})p^{1-s}\frac{1-p^{(3-2s)\lfloor\frac{e}2\rfloor}}{1-p^{3-2s}}-p^{(1-s)e-1}(1-p^{\lfloor\frac{e}2\rfloor})\Bigg].
\end{align}
\section{The ramified calculation}\label{ramcalcsec}
As in the previous section we let $|\cdot|$ be the $p$-adic absolute value, and let $v$ be the $p$-adic valuation. Let $\psi$ be a character of $\Q_p$ of conductor $\Z_p$.
\subsection{Support lemmas}\label{supportlemmassec}
Let $n$ be a positive integer. By Proposition~5.1.2 of \cite{NF}, we have, using the notation~\eqref{C0defeq},
\begin{equation}\label{supportlemmaeq1}
 \GSp(4,\Q_p)=\bigsqcup_{i=0}^nP(\Q_p)\mathcal{C}_0(p^i)K(\p^{2n}).
\end{equation}
By the last comment in Sect.~5.1 of \cite{NF},
\begin{equation}\label{supportlemmaeq2}
 P(\Q_p)1K(\p^{2n})=P(\Q_p)\mathcal{C}_0(1)K(\p^{2n})=P(\Q_p)s_2s_1s_2K(\p^{2n}).
\end{equation}
The following lemma describes when certain elements are contained in the ``last'' double coset $P(\Q_p)\mathcal{C}_0(p^n)K(\p^{2n})$.
\begin{lemma}\label{supportlemma1}  Suppose $n>0$.
\begin{enumerate}
    \item We have \begin{equation}\label{supportlemma1eq1}
  \begin{bsmallmatrix}1\\&1\\&\kappa&1\\&&&1\end{bsmallmatrix}\notin P(\Q_p)\mathcal{C}_0(p^n)K(\p^{2n}) \quad \text{ and }\quad \begin{bsmallmatrix}1\\&1\\&&1\\\kappa&&&1\end{bsmallmatrix}\notin P(\Q_p)\mathcal{C}_0(p^n)K(\p^{2n})
 \end{equation}
 for any $\kappa\in\Q_p$.
 \item We have \begin{equation}\label{supportlemma4eq1}
  \begin{bsmallmatrix}1\\&1\\\mu&&1\\\lambda&\mu&&1\end{bsmallmatrix}\in P(\Q_p)\mathcal{C}_0(p^n)K(\p^{2n})\quad\Longleftrightarrow\quad v(\mu)=n\text{ and }v(\lambda)\geq2n.
 \end{equation}
 \item We have \begin{equation}\label{supportlemma6eq1}
  s_2s_1s_2\begin{bsmallmatrix}1&&\mu&\kappa\\&1&\lambda&\mu\\&&1\\&&&1\end{bsmallmatrix}\in P(\Q_p)\mathcal{C}_0(p^n)K(\p^{2n}) \end{equation}
 if and only if one of the following conditions is satisfied:
 \begin{enumerate}
  \item $v(\lambda)\geq0$ and $v(\mu)=-n$ and $v(\kappa)\geq-2n$. In this case we have the identity
  \begin{align}\label{supportlemma6eq2}
   &s_2s_1s_2\begin{bsmallmatrix}1&&\mu&\kappa\\&1&\lambda&\mu\\&&1\\&&&1\end{bsmallmatrix}=\begin{bsmallmatrix}&1\\p^{2n}\\&&&p^{-2n}\\&&1\end{bsmallmatrix}\begin{bsmallmatrix}1\\&1\\-\mu p^{2n}&&1\\&-\mu p^{2n}&&1\end{bsmallmatrix}\begin{bsmallmatrix}&&&p^{-2n}\\&&1\\&-1\\-p^{2n}\end{bsmallmatrix}\begin{bsmallmatrix}1&&&\kappa\\&1&\lambda\\&&1\\&&&1\end{bsmallmatrix}.
  \end{align}
  \item $v(\lambda)<0$ and $v(\mu)=-n+v(\lambda)$ and $v(\kappa-\mu^2\lambda^{-1})\geq-2n$. In this case we have the identity
  \begin{align}\label{supportlemma6eq3}
   &s_2s_1s_2\begin{bsmallmatrix}1&&\mu&\kappa\\&1&\lambda&\mu\\&&1\\&&&1\end{bsmallmatrix}=\begin{bsmallmatrix}-p^{2n}\mu\lambda^{-1}&\lambda^{-1}&-1\\p^{2n}\\&&\mu&p^{-2n}\\&&\lambda\end{bsmallmatrix}\nonumber\\
     &\qquad\begin{bsmallmatrix}1\\&1\\-p^{2n}\mu\lambda^{-1}&&1\\-p^{4n}(\kappa-\mu^2\lambda^{-1})&-p^{2n}\mu\lambda^{-1}&&1\end{bsmallmatrix}\begin{bsmallmatrix}&&&p^{-2n}\\&-1\\&-\lambda^{-1}&-1\\-p^{2n}\end{bsmallmatrix}.
  \end{align}
 \end{enumerate}
\end{enumerate}
\end{lemma}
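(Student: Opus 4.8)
The entire lemma hangs on the disjoint decomposition \eqref{supportlemmaeq1}: each element of $\GSp(4,\Q_p)$ lies in exactly one coset $P(\Q_p)\mathcal{C}_0(p^i)K(\p^{2n})$, so in every case the real task is to determine the index $i$ and then appeal to disjointness. I would treat the membership (``if'') parts of (ii) and (iii) and the non-membership parts by two opposite reductions. For the inclusions in (iii), the statement already hands us the factorizations \eqref{supportlemma6eq2} and \eqref{supportlemma6eq3}, and for (ii) one writes down the analogous $p\,\mathcal{C}_0(p^n)\,k$ factorization directly; I would verify each identity by multiplying out the four factors, then check that the leftmost factor has zero lower-left $2\times2$ block, hence lies in $P(\Q_p)$ by \eqref{PRelementeq}; that the two rightmost factors meet the entrywise conditions of \eqref{Kpndefeq} at level $2n$ exactly under the stated valuation hypotheses on $\lambda,\mu,\kappa$; and that the middle factor is $\mathcal{C}_0(up^n)$ for a unit $u$ (this is where $v(\mu)=-n$ in case (a), resp. $v(\mu)=-n+v(\lambda)$ in case (b), enters). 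The one extra ingredient needed here is that $\mathcal{C}_0(up^n)\in P(\Q_p)\mathcal{C}_0(p^n)K(\p^{2n})$ for every unit $u$, which follows from the identity $\mathrm{diag}(1,1,u,u)\,\mathcal{C}_0(p^n)=\mathcal{C}_0(up^n)\,\mathrm{diag}(1,1,u,u)$ together with $\mathrm{diag}(1,1,u,u)\in K(\p^{2n})$, exactly as in the manipulation \eqref{rewritesummationeq5}.

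For the non-membership and ``only if'' directions I would run the reduction in reverse: assuming the valuation conditions fail, apply \eqref{usefulidentity} in the appropriate embedded $\SL(2)$ to pull the offending entry into the diagonal torus, rewriting the element explicitly as $p\,\mathcal{C}_0(p^i)\,k$ with $p\in P(\Q_p)$, $k\in K(\p^{2n})$, and $i\neq n$ read off from the resulting valuations; disjointness of \eqref{supportlemmaeq1} then forbids membership in the top coset. Part (i) is the clean low-rank instance of this. The first element of \eqref{supportlemma1eq1} always lies in the bottom coset $i=0$: directly when $v(\kappa)\geq0$, since then it already lies in $K(\p^{2n})$, and when $v(\kappa)<0$ by using \eqref{usefulidentity} in the coordinates $2,3$ to write it as $P(\Q_p)\,s_2\,K(\p^{2n})=P(\Q_p)K(\p^{2n})$, observing that $s_2\in K(\p^{2n})$. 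The second element of \eqref{supportlemma1eq1} is precisely the $\mu=0$ specialization of (ii), where $v(\mu)=\infty\neq n$, so it is dispatched by the same reduction used for (ii).

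The routine portion is the verification of the handed-down identities \eqref{supportlemma6eq2}, \eqref{supportlemma6eq3} and the entry checks against \eqref{Kpndefeq}; I expect the genuine obstacle to be the necessity half of (iii), and specifically case (b). There the invariant that actually separates the cosets is not the individual valuations $v(\lambda),v(\mu),v(\kappa)$ but the symplectically corrected quantity $v(\kappa-\mu^2\lambda^{-1})$, i.e.\ (up to $v(\lambda)$) the valuation of the determinant $\kappa\lambda-\mu^2$ of the symmetric block defining the unipotent. Recognizing that completing the square is forced by the interaction of the Siegel unipotent radical with $s_2s_1s_2$, and that it is exactly this combination which must satisfy $\geq-2n$ for the top coset, is the key step; once the element has been massaged into the shape on the right of \eqref{supportlemma6eq3}, the remaining bookkeeping is mechanical. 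I would accordingly spend most of the effort organizing case (b), separating the regimes $v(\lambda)\geq0$ and $v(\lambda)<0$ so that \eqref{usefulidentity} is applied to the correct variable in each, and only afterward read off the valuation conditions characterizing the top coset.
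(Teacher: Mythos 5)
Your proposal is correct and takes essentially the same approach as the paper, whose proof is exactly the one-line hint you expand: use \eqref{usefulidentity} for the reductions and notice that $1$ and $s_2s_1s_2$ (hence the Weyl-type elements your reductions terminate at) lie in the coset $P(\Q_p)\mathcal{C}_0(1)K(\p^{2n})$, so that disjointness of \eqref{supportlemmaeq1} rules out the top coset; membership is read off from the displayed factorizations together with the unit-absorption identity for $\mathcal{C}_0(up^n)$. The details you flag as the real work (the case analysis for the necessity half of (iii), organized around $v(\kappa-\mu^2\lambda^{-1})$) are precisely what the paper leaves as the ``exercise.''
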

\begin{proof}
    This is an exercise using \eqref{usefulidentity} and noticing $1,s_2s_1s_2\not\in P(\Q_p)\mathcal{C}_0(p^n)K(\p^{2n})$.
\end{proof}
\subsection{The main calculation}
In this section we assume that $p$ is a prime with $p\mid N$, equivalently, $n_p>0$. Let $\chi$ be a character of $\Q_p^\times$ with conductor exponent $a(\chi)=n_p$.  Let $f_s$ be the paramodular vector in the local representation $|\cdot|^{s-3/2}1_{\GL(2,\Q_p)}\rtimes|\cdot|^{-s+3/2}$, normalized as in~\eqref{PMieq2}. Our goal is to calculate
\begin{equation}\label{rank2badeq1}
 I:=\int\limits_{\Q_p}\int\limits_{\Q_p}\int\limits_{\Q_p}f_s(s_2s_1s_2\begin{bsmallmatrix}1&&\mu&\kappa\\&1&\lambda&\mu\\&&1\\&&&1\end{bsmallmatrix})\psi(n\lambda+r\mu+m\kappa)^{-1}\,d\lambda\,d\mu\,d\kappa.
\end{equation} appearing in~\eqref{rank2cT4eq1}. By Lemma~\ref{supportlemma1}~iii), $I=I_1+I_2$ with
\begin{align}
 \label{rank2badeq20a}I_1&=\int\limits_{p^{-2n_p}\Z_p}\,\int\limits_{p^{-n_p}\Z_p^\times}\, \int\limits_{\Z_p}f_s(\begin{bsmallmatrix}
        &1\\p^{2n_p}\\&&&p^{-2n_p}\\&&1
    \end{bsmallmatrix}\begin{bsmallmatrix}
        1\\&1\\-\mu p^{2n_p}&&1\\&-\mu p^{2n_p}&&1
    \end{bsmallmatrix})\psi(n\lambda+r\mu+m\kappa)^{-1} d\lambda\, d\mu\, d\kappa,\\
 \label{rank2badeq20b}I_2&=\int\limits_{\Q_p\setminus\Z_p}\,\int\limits_{\lambda p^{-n_p}\Z_p^\times}\,\int\limits_{\mu^2\lambda^{-1} +p^{-2n_p}\Z_p} f_s(\begin{bsmallmatrix}-p^{2n_p}\mu\lambda^{-1}&\lambda^{-1}&-1\\p^{2n_p}\\&&\mu&p^{-2n_p}\\&&\lambda\end{bsmallmatrix}\nonumber\\
      &\hspace{25ex}\begin{bsmallmatrix}
        1\\&1\\-\mu\lambda^{-1} p^{2n_p}&&1\\&-\mu\lambda^{-1} p^{2n_p}&&1
    \end{bsmallmatrix})\psi(n\lambda+r\mu+m\kappa)^{-1}d\kappa\, d\mu\, d\lambda.
\end{align}
From \eqref{indrepeq3} we calculate easily
\begin{equation}\label{rank2badeq21}
 I_1=\delta_{v(m)\geq 2n_p}\,p^{n_p(3-2s)}\chi(p^{n_p})\int\limits_{\Z_p^\times}\chi(\mu^{-1})\psi(r\mu p^{-n_p})^{-1} d\mu.
\end{equation}
This is zero if $r=0$ (because $\chi$ is ramified). It is known that (for any ramified character $\chi$ and additive character $\psi$ with conductor $\Z_p$)
\begin{equation}\label{epsilonfactorformula}
 \int\limits_{\Z_p^\times}\chi(\mu^{-1})\psi(x\mu)\,d\mu
 =\begin{cases}
   0&\text{if }a(\chi)\neq-v(x),\\
   p^{-a(\chi)/2}\chi(x)\varepsilon(1/2,\chi,\psi)&\text{if }a(\chi)=-v(x).
  \end{cases}
\end{equation}
Recall that $a(\chi)=n_p$. Hence, assuming $r\neq0$,
\begin{equation}\label{rank2badeq22}
 I_1=\delta_{v(m)\geq 2n_p}\,p^{n_p(5/2-2s)}\chi(-r)\varepsilon(1/2,\chi,\psi).
\end{equation}
Furthermore,
\begin{align}\label{rank2badeq2}
 I_2&=\delta_{v(m)\geq 2n_p}\, p^{n_p(3-2s)}\chi(p^{n_p})\int\limits_{\Q_p\setminus\Z_p}\,\int\limits_{\Z_p^\times}|\lambda|^{1-s}\chi(\lambda^{-1}\mu^{-1})\nonumber\\
    &\hspace{25ex} \psi(n\lambda +r\mu\lambda p^{-n_p} +m\mu^2\lambda p^{-2n_p})^{-1} d\mu\, d\lambda\nonumber\\
    &=\delta_{v(m)\geq 2n_p}\, p^{n_p(3-2s)}\chi(p^{n_p})\sum_{i=1}^{\infty} p^{i(2-s)}\int\limits_{\Z_p^\times}\,\int\limits_{\Z_p^\times}\chi(p^i\lambda^{-1}\mu^{-1})\nonumber\\
    &\hspace{25ex} \psi((n +r\mu p^{-n_p} +m\mu^2 p^{-2n_p})\lambda p^{-i})^{-1} d\mu\, d^\times\lambda\nonumber\\
    &=\delta_{v(m)\geq 2n_p}\, p^{n_p(3-2s)}\chi(p^{n_p})\sum_{i=1}^{\infty} p^{i(2-s)}\chi(p^i)\sum_{j=-n_p}^{\infty}\:\int\limits_{\{\mu\in\Z_p^\times\mid v(n+r\mu p^{-n_p}+m\mu^2 p^{-2n_p})=j\}}\chi(\mu^{-1})\nonumber\\
    &\hspace{10ex}\int\limits_{\Z_p^\times}\chi(\lambda^{-1})\psi(-(n +r\mu p^{-n_p} +m\mu^2 p^{-2n_p})\lambda p^{-i}) d^\times\lambda\, d\mu\nonumber\\
    &\stackrel{\eqref{Rij0eq2},\,\eqref{epsilonfactorformula}}{=}\delta_{v(m)\geq 2n_p}\, p^{n_p(3-2s)}\chi(p^{n_p})\sum_{i=1}^{\infty} p^{i(2-s)}\chi(p^i)\sum_{j=1-n_p}^{\infty}\:\int\limits_{S(j+1,n_p)}\chi(\mu^{-1})\nonumber\\
    &\hspace{10ex}\begin{cases}
        0 & -(j-i)\neq n_p\\
        p^{-n_p/2}\chi(-(n+r\mu p^{-n_p}+m\mu^2 p^{-2n_p})p^{-i})\varepsilon(1/2,\chi,\psi) & -(j-i)=n_p
    \end{cases} d\mu\nonumber\\
    &=\delta_{v(m)\geq 2n_p}\, p^{n_p(3-2s)}\chi(p^{n_p})\sum_{j=1-n_p}^{\infty}\:\int\limits_{S(j+1,n_p)}\chi(\mu^{-1})\nonumber\\
    &\hspace{10ex}p^{(n_p+j)(2-s)} p^{-n_p/2}\chi(-(n+r\mu p^{-n_p}+m\mu^2 p^{-2n_p}))\varepsilon(1/2,\chi,\psi)d\mu\nonumber\\
    &=\delta_{v(m)\geq 2n_p}\,\delta_{v(r)\neq0}\,p^{n_p(9/2-3s)}\varepsilon(1/2,\chi,\psi)\chi(-p^{n_p})K(s,T,\chi)
\end{align}
with
\begin{equation}\label{rank2badeq4}
 K(s,T,\chi)=\sum_{j=1-n_p}^\infty\;p^{j(2-s)}\int\limits_{S(j+1,n_p)}\chi\left(n\mu^{-1}+rp^{-n_p}+m\mu p^{-2n_p}\right)\,d\mu.
\end{equation}
Hence, adding \eqref{rank2badeq22} and \eqref{rank2badeq2}, we get
\begin{align}\label{rank2badeq5}
    I&=\delta_{v(m)\geq 2n_p}\,p^{n_p(5/2-2s)}\varepsilon(1/2,\chi,\psi)\bigg(\delta_{v(r)=0}\, \chi(-r)+\delta_{v(r)\neq0}\, p^{n_p(2-s)}\chi(-p^{n_p})K(s,T,\chi)\bigg).
\end{align}
Hence the calculation of $I$ is reduced to that of $K(s,T,\chi)$.
\subsection{Summary of \texorpdfstring{$K(s,T,\chi)$}{} calculation}\label{summaryofK}
Typically, for a specific character $\chi$ and given $n,r,m$ in $\Z_p$, the quantity $K(s,T,\chi)$ defined in \eqref{rank2badeq4} is computable without too much effort. However, it seems difficult to obtain a closed formula valid for any $\chi$. Table~\eqref{KsTchitable} below gives the values of $K(s,T,\chi)$ in case $p$ is odd and $\chi$ is quadratic. We will carry out the proofs only in the case $r=0$, which demonstrates the methods. In this case, as we shall see in the next section, the value of $K(s,T,\chi)$ involves the elliptic curve point count
\begin{equation}\label{apdefeq}
 a_p:=p-\#\{(x,y)\in\F_p\times \F_p\mid y^2=x^3-Dx\},
\end{equation}
where we write $r^2-4mn=Df^2$ as in Lemma~\ref{funddisclemma} with a fundamental discriminant~$D$. For other cases and more proofs see~\cite{Pierce2025}. In the table, the quantity $e$ stands for $\min(v(n),v(r),v(m))$.
\begin{equation}\label{KsTchitable}
 \begin{array}{ccccccc}
  \toprule
   &&\text{conditions}&K(s,T,\chi)\\
  \toprule
   r=0&\chi(-1)=-1&&0\\
   &\chi(-1)=1&v(n)=v(m)-2n_p&-a_p p^{e(2-s)-1}\chi(2pf)\\
   &&v(n)\neq v(m)-2n_p&0\\
   r\neq0&&v(n)<\min(v(r)-n_p,v(m)-2n_p)&0\\
   &&v(r)-n_p<\min(v(n),v(m)-2n_p)&p^{e(2-s)}(1-p^{-1})\chi(pr)\\
   &&v(m)-2n_p<\min(v(n),v(r)-n_p)&0\\
   &&v(n)=v(r)-n_p<v(m)-2n_p&-p^{e(2-s)-1}\chi(pr)\\
   &&v(n)=v(m)-2n_p<v(r)-n_p&-a_p p^{e(2-s)-1}\chi(2pf)\\
   &&v(r)-n_p=v(m)-2n_p<v(n)&-p^{e(2-s)-1}\chi(pr)\\
   &&v(n)=v(r)-n_p=v(m)-2n_p&\eqref{Ksummaryeq2}\\
  \bottomrule
 \end{array}
\end{equation}
For the last line in the table (the case $r\neq 0$ and $v(n)=v(r)-1=v(m)-2$) we have
\begin{align}\label{Ksummaryeq2}
    &K(s,T,\chi)\nonumber\\
    &=\begin{cases}
  \displaystyle\frac{\chi(-2rp)(1-\chi(-D)p^{s-2})}{1-p^{3-2s}}(p^{e(2-s)+3-2s}(1+\displaystyle\chi(-D)p^{s-2})&\\[2ex]
    \displaystyle\qquad-p^{e(s-1)+v(f)(3-2s)}(1-\chi(-D)p^{1-s}) )&\text{ if }v(D)=1,\\[1ex]
  \displaystyle-p^{e(s-1)+2s-4+v(f)(3-2s)}\chi(-2pf)\tilde a_p&\text{ if }v(D)=0, v(f)=e+1\\[1ex]
  \displaystyle\chi(-2rp)\frac{p^{e(2-s)}}{1-p^{3-2s}}(p^{3-2s}-p^{-1}-p^{(v(f)-e-1)(3-2s)}(1-p^{-1}))&\\[2ex]
    \displaystyle\qquad-p^{e(s-1)+2s-4+v(f)(3-2s)}\chi(-2pf)\tilde a_p&\text{ if }v(D)=0, v(f)>e+1,
 \end{cases}
\end{align}
where
\begin{equation}\label{rank2badeq104}
 \tilde a_p=\begin{cases}
        p-\#\left\{(x,y)\in\F_p\times\F_p\mid y^2=x((x-b)^2-D)\right\}&\text{if }v(f)=v(r),\\
        \chi(r/f)&\text{if }v(f)>v(r),
       \end{cases}
\end{equation}
with $b\in\Z$ such that $\frac rf+p\Z_p=b+p\Z_p$.

\subsection{The calculation of \texorpdfstring{$K(s,T,\chi)$}{} for \texorpdfstring{$r=0$}{}}
In this section we assume $p$ is odd and $T=\mat{n}{0}{0}{m}$ with $n,m\neq0$. The quantity in \eqref{rank2badeq4} becomes
\begin{equation}\label{calcofKr=0eq0}
 K(s,T,\chi)=\sum_{j=1-n_p}^\infty\;p^{j(2-s)}\int\limits_{S(j+1,n_p)}\chi\left(n\mu^{-1}+m\mu p^{-2n_p}\right)\,d\mu.
\end{equation}
The following lemma will be useful to carry out certain $p$-adic integrations.
\begin{lemma}
    Let $p$ be an odd prime and $i\geq 1$.  Let $a,b\in p\Z_p$ and $u\in\Z_p^\times$.  Then either one of the maps
    \begin{align}\label{maplemma1}
         \mu\mapsto \mu^{-1}+a\mu, \qquad 
        \mu\mapsto\frac{a\mu^2+2u\mu}{a\mu+u},
    \end{align}from 
    $\Z_p^\times\rightarrow \Z_p^\times$ descends to a bijection $\Z_p^\times/(1+p^i\Z_p)\rightarrow \Z_p^\times/(1+p^i\Z_p)$.
    
\end{lemma}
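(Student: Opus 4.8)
The plan is to pass to the finite quotient and reduce everything to a single factorization identity. Identifying $\Z_p^\times/(1+p^i\Z_p)$ with $(\Z/p^i\Z)^\times$ via reduction modulo $p^i$, two units $\mu,\nu$ become equal in the quotient exactly when $\mu-\nu\in p^i\Z_p$. For each of the two maps, call it $f$, I would first confirm that it genuinely sends $\Z_p^\times$ into $\Z_p^\times$: for the first map $\mu^{-1}+a\mu\equiv\mu^{-1}\pmod p$ is a unit because $a\in p\Z_p$, and for the second map the numerator $\mu(a\mu+2u)$ and the denominator $a\mu+u$ are both units modulo $p$ (here $2u$ is a unit since $p$ is odd), so their quotient is a unit. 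The entire argument then rests on computing the difference $f(\mu)-f(\nu)$.

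Concretely, I would establish, for $\mu,\nu\in\Z_p^\times$, a formula of the shape $f(\mu)-f(\nu)=(\mu-\nu)\,w(\mu,\nu)$ with $w(\mu,\nu)\in\Z_p^\times$. For the first map a one-line calculation gives
\[
 f(\mu)-f(\nu)=(\mu-\nu)\Bigl(a-\tfrac1{\mu\nu}\Bigr),
\]
and $a-\tfrac1{\mu\nu}\equiv-\tfrac1{\mu\nu}\pmod p$ is a unit because $a\in p\Z_p$. For the second map, clearing the common denominator $(a\mu+u)(a\nu+u)$ and expanding yields
\[
 f(\mu)-f(\nu)=\frac{(\mu-\nu)\bigl(a^2\mu\nu+au(\mu+\nu)+2u^2\bigr)}{(a\mu+u)(a\nu+u)},
\]
where the bracket is $\equiv 2u^2\pmod p$, hence a unit, and the denominator is a unit. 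This factorization is the crux, and producing it (especially verifying that the bracket in the second case reduces cleanly to $2u^2$) is the only step that requires any computation.

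Once the difference formula is in hand, both required properties follow at once. Since $w(\mu,\nu)\in\Z_p^\times$, we have $f(\mu)-f(\nu)\in p^i\Z_p$ if and only if $\mu-\nu\in p^i\Z_p$. The ``if'' direction shows that $f$ descends to a well-defined map on $\Z_p^\times/(1+p^i\Z_p)\cong(\Z/p^i\Z)^\times$, and the ``only if'' direction shows that this induced map is injective; since $(\Z/p^i\Z)^\times$ is finite, an injective self-map is automatically bijective. I do not expect any genuine obstacle: the only points to watch are the use of $p$ odd (so that $2$ is a unit) in the second map, and the role of $a\in p\Z_p$, which is precisely what keeps the relevant factor $w$ a unit after reduction modulo $p$. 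The hypothesis $b\in p\Z_p$ is not needed, as $b$ appears in neither map.
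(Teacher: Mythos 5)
Your proof is correct, and both difference formulas check out: for the first map $f(\mu)-f(\nu)=(\mu-\nu)\bigl(a-\tfrac1{\mu\nu}\bigr)$, and for the second the numerator of $f(\mu)-f(\nu)$ expands to $(\mu-\nu)\bigl(a^2\mu\nu+au(\mu+\nu)+2u^2\bigr)$, whose bracket is indeed $\equiv 2u^2\bmod p$, a unit precisely because $p$ is odd and $a\in p\Z_p$. Since congruence modulo $p^i$ of units is the same as equivalence in $\Z_p^\times/(1+p^i\Z_p)$, your factorization simultaneously gives well-definedness and injectivity of the induced map, and your observation that $b$ never appears is also right. Where you diverge from the paper is in the surjectivity step: the paper's (very terse) proof invokes Hensel's Lemma, which amounts to solving, say, $a\mu^2-y\mu+1=0$ for a given target $y\in\Z_p^\times$ — the reduction mod $p$ has the simple root $\mu\equiv y^{-1}$ and the derivative $2a\mu-y$ is a unit, so the solution lifts; this actually proves the stronger statement that the map $\Z_p^\times\to\Z_p^\times$ itself is surjective. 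You instead stay entirely at the level of the finite quotient $(\Z/p^i\Z)^\times$ and get bijectivity for free from injectivity of a self-map of a finite set. Your route is more elementary (no lifting argument at all) and is exactly sufficient for the lemma as stated; the paper's route yields surjectivity of the map on $\Z_p^\times$ before passing to the quotient, a slightly stronger fact that the application in \eqref{calcofKr=0eq1} and \eqref{calcofKr=0eq8} does not require.
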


\begin{proof}
    This proof is an exercise and makes use of Hensel's Lemma for surjectivity.
\end{proof}

\begin{proposition}\label{calcofKr=0lemma}
    Let $p$ be an odd prime.  Suppose $r=0$ and $n,m\neq0$. Assume $\chi$ is quadratic. Let $-4mn=Df^2$ as in Lemma~\ref{funddisclemma}. Then 
    \begin{equation}\label{calcofKr=0lemmaeq2}
        K(s,T,\chi)=
        \begin{cases}
         -a_p p^{e(2-s)-1}\chi(2pf)&\text{if }\chi(-1)=1\text{ and }v(n)=v(m)-2n_p,\\
         0&\text{otherwise},
        \end{cases}
    \end{equation}
    where $e=v(n)$ and $a_p$ is as in \eqref{apdefeq}.
\end{proposition}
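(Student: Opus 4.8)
The plan is to start from \eqref{calcofKr=0eq0}, dispose of the odd case by a sign symmetry, isolate the one case with nonempty support, and convert the surviving $p$-adic integral into a character sum over $\F_p$ that is the point count \eqref{apdefeq}. First, the set $S(j+1,n_p)$ of \eqref{Rij0eq2} depends on $\mu$ only through $\mu^2$, hence is stable under $\mu\mapsto-\mu$, whereas the integrand is multiplied by $\chi(-1)$ under this substitution; thus $K=\chi(-1)K$ and $K=0$ whenever $\chi(-1)=-1$. From now on I assume $\chi(-1)=1$. Because $\chi$ is ramified quadratic and $p$ is odd, its conductor exponent is $n_p=1$ and $\chi|_{\Z_p^\times}$ is the Legendre symbol, which I write $\lambda$; this is the only property of $\chi$ on units I will use.

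Next I would analyze the support by comparing $v(n)$ with $v(m)-2n_p$. If $v(n)<v(m)-2n_p$, then $v(n+m\mu^2p^{-2n_p})=v(n)$ for all units $\mu$, only $j=v(n)$ survives with $S(j+1,n_p)=\Z_p^\times$, and the integrand is $\chi(n)\chi(\mu^{-1}+c\mu)$ with $c=(m/n)p^{-2n_p}\in p\Z_p$. By the lemma on the maps \eqref{maplemma1}, $\mu\mapsto\mu^{-1}+c\mu$ is a measure-preserving bijection of $\Z_p^\times$, so the integral is $\chi(n)\int_{\Z_p^\times}\chi\,d\mu=0$ since $\chi$ is nontrivial on units. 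The case $v(n)>v(m)-2n_p$ is symmetric: the integrand is proportional to $\chi(\mu+c'\mu^{-1})$ with $c'=(n/m)p^{2n_p}\in p\Z_p$, again a bijection (the first map of \eqref{maplemma1} after $\mu\mapsto\mu^{-1}$), so the integral vanishes. Hence $K=0$ unless $v(n)=v(m)-2n_p=:e$.

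In that case set $n=p^en_0$, $m=p^{e+2n_p}m_0$ with $n_0,m_0\in\Z_p^\times$, so $n+m\mu^2p^{-2n_p}=p^eg_0(\mu)$, $g_0(\mu)=n_0+m_0\mu^2$. Summing the shells $v(g_0(\mu))=j-e$ collapses \eqref{calcofKr=0eq0} to
\[
 K=\chi(p)^e\,p^{e(2-s)}\int\limits_{\Z_p^\times}\chi(\mu)^{-1}\chi(g_0(\mu))\,|g_0(\mu)|^{s-2}\,d\mu.
\]
Partitioning $\Z_p^\times$ by residue mod $p$: on residues with $\overline{g_0}\neq0$ the integrand is constant on the disk, contributing $\tfrac1p\lambda(\bar\mu)\lambda(\overline{g_0(\mu)})$. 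A residue with $\overline{g_0}=0$ occurs only if $\chi_D(p)=1$; then $-4mn=Df^2$ from Lemma~\ref{funddisclemma} together with $v(n)=v(m)-2n_p$ forces $v(D)=0$ and $v(f)=e+n_p$, and $g_0(\mu)=m_0(\mu-\delta)(\mu+\delta)$ with $\delta\in\Z_p^\times$. Near each root $\chi(\mu\mp\delta)$ is the quadratic character of the unit part, while the remaining factors are locally constant, so each shell integrates to zero because $\int_{\Z_p^\times}\lambda\,d\mu=0$. Adding back these vanishing terms, the integral equals $\tfrac1p\sum_{\bar\mu\in\F_p^\times}\lambda(\bar\mu)\lambda(\bar n_0+\bar m_0\bar\mu^2)$.

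It remains to identify this sum with $a_p$. Factoring out $\lambda(\bar m_0)$ and substituting $\bar\mu=(f_0/2m_0)x$, where $f=p^{e+n_p}f_0$ and $n_0/m_0=-D(f_0/2m_0)^2$ (from $n_0m_0=-Df_0^2/4$), yields $\lambda(2f_0)\sum_{x\in\F_p}\lambda(x)\lambda(x^2-D)$, and since $\lambda(x)\lambda(x^2-D)=\lambda(x^3-Dx)$ this sum is $-a_p\lambda(2f_0)$ by \eqref{apdefeq}. Using $\chi(p)^2=1$, $\chi|_{\Z_p^\times}=\lambda$ and $f=p^{e+n_p}f_0$ to rewrite $\chi(p)^e\lambda(2f_0)=\chi(2pf)$ gives $K=-a_p\,p^{e(2-s)-1}\chi(2pf)$, as claimed. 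The delicate point is the treatment of the roots of $g_0$ when $\chi_D(p)=1$: one must control the weight $|g_0|^{s-2}$ across the shells approaching a root and see the quadratic character cancel each shell, so the whole integral collapses to a single finite character sum; matching the constant $\chi(2pf)$ and the sign of $a_p$ afterward is routine but error-prone.
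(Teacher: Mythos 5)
Your proof is correct, and its overall skeleton coincides with the paper's: the $\mu\mapsto-\mu$ symmetry disposes of $\chi(-1)=-1$; the first map in \eqref{maplemma1} kills both unbalanced cases $v(n)\neq v(m)-2n_p$; and the balanced case comes down to the character sum $\sum_{x\in\F_p}\left(\frac{x^3-Dx}{p}\right)=-a_p$. The genuine difference is how you organize the balanced case. The paper stays with the shell sum \eqref{calcofKr=0eq0} and proves the collapse \eqref{calcofKr=0eq2}: for $j>v(n)$ the shell $S(j+1,n_p)$ is a pair of annuli around the square roots $\pm u$ of $-\tfrac{n}{m}p^{2n_p}$, and after translating to a root the integrand is $\chi$ of a rational function, which the \emph{second} map in \eqref{maplemma1} converts into $\int_{\Z_p^\times}\chi=0$; the surviving shell is then reduced by substitutions to $\chi(\mu^3-D\mu)$ and a residue count, as in \eqref{calcofKr=0lemmaeq7}. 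You instead fold all shells into the single weighted integral $\chi(p)^e p^{e(2-s)}\int_{\Z_p^\times}\chi(\mu)\,\chi(g_0(\mu))\,|g_0(\mu)|^{s-2}\,d\mu$ with $g_0(\mu)=n_0+m_0\mu^2$, partition $\Z_p^\times$ by residues mod $p$, and dispose of the disks around the roots $\pm\delta$ of $g_0$ by factoring $g_0(\mu)=m_0(\mu-\delta)(\mu+\delta)$: on the shell $\mu=\delta+p^tu$ the factors $\chi(\mu)$ and $\chi(\mu+\delta)$ are constant because $\chi$ has conductor $p$, so each shell integral is a constant multiple of $\int_{\Z_p^\times}\chi(u)\,du=0$. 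This bypasses the second bijection of \eqref{maplemma1} entirely and is more elementary, needing only that $\chi$ is trivial on $1+p\Z_p$; the price is that the paper's rational-map lemma is the tool that still works when the quadratic does not factor so cleanly (the $r\neq0$ cases treated in \cite{Pierce2025}). Your end-game, with the substitution carried out in $\F_p$ and the bookkeeping $\chi(p)^e\lambda(2f_0)=\chi(2pf)$ via $n_p=1$ and $\chi(p)^2=1$, is equivalent to the paper's.
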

\begin{proof}
It is immediate from \eqref{calcofKr=0eq0} that $K(s,T,\chi)=0$ if $\chi(-1)=-1$. Hence we assume $\chi(-1)=1$ for the rest of the proof.

Suppose that $v(n)<v(m)-2n_p$. Then the sum collapses to $j=v(n)$, and for $i\geq n_p$,
\begin{align}\label{calcofKr=0eq1}
 K(s,T,\chi)&=p^{v(n)(2-s)}\chi(n)\int\limits_{\Z_p^\times}\chi(\mu^{-1}+\tfrac{m}{n}p^{-2n_p}\mu)\,d\mu\nonumber\\
    &=p^{v(n)(2-s)}\chi(n)\sum\limits_{x\in\Z_p^\times/(1+p^i\Z_p)}\chi(x^{-1}+\tfrac{m}{n}p^{-2n_p}x)p^{-i}\nonumber\\
    &\stackrel{\eqref{maplemma1}}{=}p^{v(n)(2-s)}\chi(n)\sum\limits_{x\in\Z_p^\times/(1+p^i\Z_p)}\chi(x)p^{-i}=p^{v(n)(2-s)}\chi(n)\int\limits_{\Z_p^\times}\chi(x)dx=0.
\end{align}
Similarly we see that $K(s,T,\chi)=0$ for $v(n)>v(m)-2n_p$. From now on we assume that $v(n)=v(m)-2n_p$. Then
    \begin{equation}\label{calcofKr=0lemmaeq4}
        \Z_p^\times \ni \frac{m}{n}p^{-2n_p}=\frac{4mn}{4n^2p^{2n_p}}=-\frac{Df^2}{4n^2p^{2n_p}}=-\bigg(\frac{f}{2np^{n_p}}\bigg)^2 D.
    \end{equation}
It follows that $v_p(D)=0$ and $\frac{f}{2np^{n_p}}\in\Z_p^\times$.  We claim the sum in \eqref{calcofKr=0eq0} still collapses to $j=v(n)$, i.e.,
\begin{equation}\label{calcofKr=0eq2}
    K(s,T,\chi)=p^{v(n)(2-s)}\int\limits_{S(v(n)+1,n_p)}\chi(n\mu^{-1}+m\mu p^{-2n_p})d\mu.
\end{equation} To see this, consider the set 
\begin{equation}\label{calcofKr=0eq3}
    S(j+1,n_p)=\{\mu\in\Z_p^\times\mid v(\tfrac{n}{m}p^{2n_p}+\mu^2)=j-v(n)  \}.
\end{equation}
If $-\tfrac{n}{m}p^{2n_p}\not\in\Z_p^{\times 2}$, then $v(\tfrac{n}{m}p^{2n_p}+\mu^2)=0$. Hence, $S(j+1,n_p)=\emptyset$ if $j\neq v(n)$. If $-\tfrac{n}{m}p^{2n_p}=u^2$ for some $u\in\Z_p^{\times}$, then $S(j+1,n_p)=\{\mu\in\Z_p^\times\mid v(\mu-u)+v(\mu+u)=j-v(n)\}$. 

If $j<v(n)$, then $S(j+1,n_p)=\emptyset$.  If $j>v(n)$, 
\begin{align}\label{calcofKr=0eq6}
    S(j+1,n_p)&=\{\mu\in\Z_p^\times\mid v(\mu-u)=j-v(n)\}\sqcup \{\mu\in\Z_p^\times\mid v(\mu+u)=j-v(n)\}\nonumber\\
    &=(u+p^{j-v(n)}\Z_p^\times)\sqcup(-u+p^{j-v(n)}\Z_p^\times).
\end{align} Then
\begin{align}\label{calcofKr=0eq8}
    \int\limits_{S(j+1,n_p)}\chi(n\mu^{-1}+m\mu p^{-2n_p})d\mu&=\int\limits_{u+p^{j-v(n)}\Z_p^\times}+\int\limits_{-u+p^{j-v(n)}\Z_p^\times}\nonumber\\
    &=2\int\limits_{p^{j-v(n)}\Z_p^\times}\chi\bigg(\frac{n}{\mu+u}+m(\mu+u) p^{-2n_p}\bigg)d\mu\nonumber\\
    &=2p^{v(n)-j}\chi(mp^{j-v(n)-2n_p})\int\limits_{\Z_p^\times}\chi\bigg(\frac{\mu^2 p^{j-v(n)}+2\mu u}{\mu p^{j-v(n)}+u}\bigg)d\mu\nonumber\\
    &\overset{\eqref{maplemma1}}{=}2p^{v(n)-j}\chi(mp^{j-v(n)-2n_p})\int\limits_{\Z_p^\times}\chi(\mu)d\mu=0.
\end{align}
This proves our claim \eqref{calcofKr=0eq2}. We now calculate
    \begin{align}\label{calcofKr=0lemmaeq7}
        K(s,T,\chi)&=p^{e(2-s)}\int\limits_{ \{\mu\in\Z_p^\times\mid v(n\mu^{-1}+m\mu p^{-2n_p})=e\} }\chi(n\mu^{-1}+m\mu p^{-2n_p})\,d\mu\nonumber\\
        &=p^{e(2-s)}\chi(n)\int\limits_{ \{\mu\in\Z_p^\times\mid v(\mu^{-1}-(\tfrac{f}{2np^{n_p}})^2D\mu)=0\} }\chi\bigg(\mu^{-1}-\bigg(\frac{f}{2np^{n_p}}\bigg)^2D\mu\bigg)\,d\mu\nonumber\\
        &\overset{\mu\mapsto\frac{2np^{n_p}}{f}\mu}{=}p^{e(2-s)}\chi\bigg(\frac{f}{2p^{n_p}}\bigg)\int\limits_{ \{\mu\in\Z_p^\times\mid v(\mu^{-1}-D\mu)=0\} }\chi(\mu^{-1}-D\mu)\,d\mu\nonumber\\
        &=p^{e(2-s)}\chi(2pf)\int\limits_{ \{\mu\in\Z_p^\times\mid v(\mu^3-D\mu)=0\} }\chi(\mu^3-D\mu)\,d\mu\nonumber\\
        &=p^{e(2-s)-1}\chi(2pf)\sum\limits_{x\in\Z/p\Z}\bigg(\frac{x^3-Dx}{p}\bigg).
    \end{align}
It is easy to see that
\begin{equation}\label{calcofKr=0lemmaeq6}
        \#\{(x,y)\in\F_p\times \F_p\mid y^2=x^3-Dx\}=\sum_{x=0}^{p-1}\bigg(\frac{x^3-Dx}{p}\bigg)+p.
\end{equation}
This concludes the proof.
\end{proof}

Note that the cubic curve $y^2=x^3-Dx$ has discriminant $64D^3$, and hence is an elliptic curve over $\mathbb{F}_p$. See Theorem~4.23 of \cite{Washington2008} for a determination of the numbers~$a_p$.%
\bibliography{my}{}

\begin{thebibliography}{10}

\bibitem{Braun1939}
Hel Braun.
\newblock Konvergenz verallgemeinerter {E}isensteinscher {R}eihen.
\newblock {\em Math. Z.}, 44(1):387--397, 1939.

\bibitem{Cassels1978}
J.~W.~S. Cassels.
\newblock {\em Rational quadratic forms}, volume~13 of {\em London Mathematical Society Monographs}.
\newblock Academic Press, Inc. [Harcourt Brace Jovanovich, Publishers], London-New York, 1978.

\bibitem{EichlerZagier1985}
Martin Eichler and Don Zagier.
\newblock {\em The theory of {J}acobi forms}, volume~55 of {\em Progress in Mathematics}.
\newblock Birkh\"auser Boston, Inc., Boston, MA, 1985.

\bibitem{GradRyzh2007}
I.~S. Gradshteyn and I.~M. Ryzhik.
\newblock {\em Table of integrals, series, and products}.
\newblock Elsevier/Academic Press, Amsterdam, seventh edition, 2007.

\bibitem{Katsurada1999}
Hidenori Katsurada.
\newblock An explicit formula for {S}iegel series.
\newblock {\em Amer. J. Math.}, 121(2):415--452, 1999.

\bibitem{Maass1964}
Hans Maass.
\newblock Die {F}ourierkoeffizienten der {E}isensteinreihen zweiten {G}rades.
\newblock {\em Mat.-Fys. Medd. Danske Vid. Selsk.}, 34(7):25, 1964.

\bibitem{Maass1972}
Hans Maass.
\newblock \"uber die {F}ourierkoeffizienten der {E}isensteinreihen zweiten {G}rades.
\newblock {\em Mat.-Fys. Medd. Danske Vid. Selsk.}, 38(14):13, 1972.

\bibitem{Muic2009}
Goran Mui\'c.
\newblock Intertwining operators and composition series of generalized and degenerate principal series for {${\rm Sp}(4,\mathbb R)$}.
\newblock {\em Glas. Mat. Ser. III}, 44(64)(2):349--399, 2009.

\bibitem{OMeara1971}
O.~T. O'Meara.
\newblock {\em Introduction to quadratic forms}, volume Band 117 of {\em Die Grundlehren der mathematischen Wissenschaften}.
\newblock Springer-Verlag, New York-Heidelberg, 1971.
\newblock Second printing, corrected.

\bibitem{Pierce2025}
Erin Pierce.
\newblock On {S}iegel {E}isenstein series with level.
\newblock {\em University of North Texas PhD Thesis}, 2025.

\bibitem{PoorYuen2013}
Cris Poor and David~S. Yuen.
\newblock The cusp structure of the paramodular groups for degree two.
\newblock {\em J. Korean Math. Soc.}, 50(2):445--464, 2013.

\bibitem{NF}
Brooks Roberts and Ralf Schmidt.
\newblock {\em Local newforms for {GS}p(4)}, volume 1918 of {\em Lecture Notes in Mathematics}.
\newblock Springer, Berlin, 2007.

\bibitem{Schmidt2017}
Ralf Schmidt.
\newblock Archimedean aspects of {S}iegel modular forms of degree 2.
\newblock {\em Rocky Mountain J. Math.}, 47(7):2381--2422, 2017.

\bibitem{Siegel1935}
Carl~Ludwig Siegel.
\newblock {\"U}ber die analytische {T}heorie der quadratischen {F}ormen.
\newblock {\em Ann. of Math. (2)}, 36(3):527--606, 1935.

\bibitem{Siegel1939}
Carl~Ludwig Siegel.
\newblock Einf\"uhrung in die {T}heorie der {M}odulfunktionen {$n$}-ten {G}rades.
\newblock {\em Math. Ann.}, 116:617--657, 1939.

\bibitem{Washington2008}
Lawrence~C. Washington.
\newblock {\em Elliptic curves}.
\newblock Discrete Mathematics and its Applications (Boca Raton). Chapman \& Hall/CRC, Boca Raton, FL, second edition, 2008.
\newblock Number theory and cryptography.

\end{thebibliography}
\bibliographystyle{plain}

\end{document}